\newcommand{\lvt}{\left|\kern-1.35pt\left|\kern-1.3pt\left|}
\newcommand{\rvt}{\right|\kern-1.3pt\right|\kern-1.35pt\right|}
\newtheorem{thm}{Theorem}[section]
\newtheorem{cor}[thm]{Corollary}
\newtheorem{lem}[thm]{Lemma}
\newtheorem{prop}[thm]{Proposition}
\newtheorem{defn}[thm]{Definition}
\theoremstyle{remark}
\newtheorem{rem}{Remark}[section]
 \def\la{{\langle}}
 \def\ra{{\rangle}}
\def\bkappa{{\boldsymbol\kappa}}
\def\brho{{\boldsymbol\rho}}
 \def\sph{{\mathbb{S}^{d-1}}}
 \def\ssb{{\mathsf b}}
 \def\sh{{\mathsf h}}
 \def\su{{\mathsf u}}
 \def\sw{{\mathsf w}}
 \def\sJ{{\mathsf J}}
 \def\sK{{\mathsf K}}
 \def\sL{{\mathsf L}}
 \def\sP{{\mathsf P}}
 \def\sQ{{\mathsf Q}}
 \def\sS{{\mathsf S}}
 \def\sT{{\mathsf T}}
 \def\d{{\mathrm{d}}}
 \def\e{{\mathrm{e}}}
 \def\i{{\mathrm{i}}}
 \def\a{{\alpha}}
 \def\b{{\beta}}
 \def\g{{\gamma}}
 \def\k{{\kappa}}
 \def\t{{\theta}}
 \def\l{{\lambda}}
 \def\s{\sigma}
 \def\la{{\langle}}
 \def\ra{{\rangle}}
 \def\bb{{\mathbf b}}
 \def\hb{{\mathbf h}}
 \def\kb{{\mathbf k}}
 \def\ub{{\mathbf u}}
 \def\Pb{{\mathbf P}}
 \def\Kb{{\mathbf K}}
 \def\Lb{{\mathbf L}}
 \def\Sb{{\mathbf S}}
 \def\Tb{{\mathbf T}}
 \def\CD{{\mathcal D}}
 \def\CH{{\mathcal H}}
 \def\CL{{\mathcal L}}
 \def\CV{{\mathcal V}}
 \def\BB{{\mathbb B}}
 \def\NN{{\mathbb N}}
 \def\RR{{\mathbb R}}
 \def\SS{{\mathbb S}}
 \def\VV{{\mathbb V}}
 \def\ZZ{{\mathbb Z}}
      \def\proj{\operatorname{proj}}
\def\lla{\langle{\kern-2.5pt}\langle}      
\def\rra{\rangle{\kern-2.5pt}\rangle}      
\newcommand{\wt}{\widetilde}
\newcommand{\wh}{\widehat}
\DeclareMathOperator{\esssup}{ess\,sup}
\def\sph{\mathbb{S}^{d-1}}
\def\f{\frac}
\begin{document}
 
\title{Laguerre expansions on conic domains}
\author{Yuan Xu}
\address{Department of Mathematics\\ University of Oregon\\
    Eugene, Oregon 97403-1222.}\email{yuan@uoregon.edu}

\date{\today}
\keywords{Laguerre polynomials, orthogonal expansions, convolution operator, conic domains}
\subjclass[2010]{ 33C50, 35C10, 42C05, 42C10}

\begin{abstract} 
We study the Fourier orthogonal expansions with respect to the Laguerre type weigh functions on the
conic surface of revolution and the domain bounded by such a surface. The main results include a 
closed form formula for the reproducing kernels 
of the orthogonal projection operator and a pseudo convolution structure on the conic domain; the latter 
is shown to be bounded in an appropriate $L^p$ space and used to study mean convergence of the 
Ces\`aro means of the Laguerre expansions on conic domains.
\end{abstract}
\maketitle

\section{Introduction}
\setcounter{equation}{0}

The Laguerre expansions usually mean the Fourier orthogonal expansions in the Laguerre polynomials 
on the half-line $\RR_+ = [0,\infty)$ or expansions in product Laguerre polynomials on $\RR_+^d$ for 
$d \ge 2$ (cf. \cite{Th}). In the present paper, we study the Fourier orthogonal expansions on the conic surface 
$$
   \VV_0^{d+1} = \left\{(x,t): \|x\| = t, \quad x \in \RR^d, t \in \RR_+\right\}
$$
with respect to the polynomials that are orthogonal with respect to the weight function 
$$
    \sw_{\k}(x,t) = h_\k^2(x)  t^{-1} \e^{-t} \quad\hbox{with} \quad h_\k(x) = \prod_{k=1}^d |x_i|^{\k_i}, 
     \qquad (x,t) \in \VV_0^{d+1},
$$
as well as the expansions on the solid cone $\VV^{d+1}$ bounded by the surface $\VV_0^{d+1}$. 
This is the first study for the Laguerre type weight on conic domains; our results are new even for 
$\sw_0(x,t) = t^{-1} e^{-t}$. 

If $f(x,t) = f_0(t)$ with $f_0: \RR_+\mapsto \RR$, then the orthogonal expansions of $f$ on the conic 
domain becomes the classical Laguerre expansions on $\RR_+$, which has been studied extensively 
in the literature; see  \cite{AW, GM, Ma, Me, MW, Po, Th0, Th} and, for some more recent works, see 
\cite{BRT, NS, NSS, RT} and the references therein. As in the study of classical Laguerre expansions, 
our study relies on intrinsic properties of orthogonal polynomials that hold only for particular weight 
functions and special domains. The study of orthogonal structure on the conic domains is initiated 
recently in \cite{X20a, X20b}. A basis of orthogonal polynomials with respect to $\sw_\k$, called the 
Laguerre type polynomials below, can be given in terms of the classical Laguerre polynomials and 
spherical $h$-harmonics, the latter are orthogonal polynomials with respect to $h_\k^2(x)$ on the unit 
sphere $\sph$ and are the simplest examples in the Dunkl's theory of $h$-harmonics associated with 
reflection groups. This basis is briefly treated in \cite{X20a}, where the main results are established for 
the Jacobi type polynomials on compact conic domains, for which $t\in \RR_+$ is replace by $t \in [0,1]$ 
and $\e^{-t}$ is replace by $(1-t)^\g$. Our starting point is to show that the Laguerre type polynomials with 
respect to $ \sw_{\k}$ on the conic surface can be regarded as limits of the Jacobi type polynomials on 
the compact conic surface, which allows us to derive properties of the former from those of the latter. 
In particular, starting from the closed formula for the reproducing kernels of the Jacobi type polynomials 
in \cite{X20a}, we are able to deduce closed form formulas for the Poisson kernel and for the reproducing 
kernels of the Laguerre type polynomials on the conic domains. Since the reproducing kernels are kernels
of the orthogonal projection operators, the closed form formula provides an essential tool for our analysis. 
It leads to the definition of a pseudo convolution structure on the conic surface, defined via a generalized 
translation operator, which is shown to be bounded in an appropriate $L^p$ space. As an application, 
we use the convolution structure to study the mean convergence of the Ces\`aro $(C,\delta)$ means of 
the Laguerre expansions on the conic surface and determine the critical index for the convergence. We 
will also establish analogues results on the solid cone $\VV^{d+1}$ with respect to the weight function 
$W_{\k,\mu}(x,t) = h_\k^2(x) (t^2-\|x\|^2)^{\mu - \f12} \e^{-t}$, which can be deduced from a closed relation 
between the orthogonal structures on the two domains, akin to the relation between the unit ball and the
unit sphere \cite{DX}. 

The above narrative of our main results on the conic surface is rooted in the study of the Laguerre  
expansions on $\RR_+^d$. The difference in the domain, however, means different geometry and different 
obstacles. The closed form formula for the reproducing kernel, hence the generalized translation operator, 
is substantially more involved on the conic domains. Because of the complexity of the closed form formula, 
proving the boundedness of the pseudo convolution becomes far more elaborate. There is also a hidden
subtle dependence on the parameters, which shows, in the case of $\sw_0$, that the convolution has a 
weaker upper bound for $d = 2$ than that for $d \ge 3$. The analysis on the conic surface is not a 
straightforward extension of the ordinary Laguerre expansions. 

The paper is organized as follows. In the next section we collect results on classical orthogonal polynomials 
and special functions that will be needed. The Laguerre type polynomials and the closed form formulas for 
the Poisson and reproducing kernels will be studied in the the third section. The generalized translation 
operator and the pseudo convolution will be defined in the fourth section, where they are shown to be 
bounded and used to study the Ces\`aro means. Finally, the results on the conic surface are extended to 
solid cone in the fifth section.

\section{Preliminary}
\setcounter{equation}{0}

We recall several families of orthogonal polynomials that will be needed in this section. These are
the Laguerre polynomials, the Gegenbauer and Jacobi polynomials, and spherical $h$-harmonics 
on the unit sphere and orthogonal polynomials on the unit ball. 

\subsection{Laguerre polynomials}
For $\a > -1$, the Laguerre polynomials $L_n^\a$ are given by 
$$
   L_n^\a(t) = \frac{(\a+1)_n}{n!} \sum_{k=0}^n \frac{(-n)_k}{k! (\a+1)_k} t^k, 
$$
and they can also be defined by their generating function 
\begin{equation}\label{eq:generatingL}  
   \sum_{n=0}^\infty L_n^\a (t) r^n = \frac{1}{(1-r)^{\a+1}} \e^{- \frac{t r}{1-r}}.
\end{equation}
The Laguerre polynomials are orthogonal with respect to the inner product 
$$
     \la f, g\ra_\a =b_\a \int_0^\infty f(t) g(t) w_\a(t) \d t, \quad w_\a(t) := t^\a e^{-t}, \quad b_\a :=\frac{1}{\Gamma(\a+1)},
$$
which is normalized so that $\la 1,1\ra_\a =1$. More precisely, for $n, m \in \NN_0$, 
$$
  \la L_n^\a, L_m^\a \ra_\a = h_n^\a \delta_{n,m},  \quad \hbox{where} \quad h_n^\a := \frac{(\a+1)_n}{n!} = L_n^\a(0). 
$$

For $f \in L^2(\RR_+, w_\a)$, its Fourier-Laguerre expansion is defined by 
\begin{equation}\label{eq:FourierLaguerre}
  f(t) = \sum_{n=0}^\infty \wh f_n^\a \frac{L_n^\a(t)}{\sqrt{h_n^\a}}, \quad \hbox{where} \quad \wh f_n^\a = 
     \frac{1}{\sqrt{h_n^\a}} \la f, L_n^\a \ra_\a. 
\end{equation}
One of the important tool for studying the Laguerre expansions is a convolution structure motivated by 
the product formula of the Laguerre polynomials. Let $J_\a$ 
be the usual Bessel function and let  $j_{\a}(z) := z^{-\a} J_\a(z)$. For $\a > -\f12$, the product formula is 
classical \cite{Wa}; see also \cite[p. 142]{Th}, 
\begin{align} \label{eq:productL1}
 \frac{L_n^\a(t) L_n^\a(s)}{L_n^\a(0)} = 
     &\frac{2^{\a} \Gamma(\a+1)}{\sqrt{2\pi}} \int_0^\pi L_n^{\a} \left(t +s + 2 \sqrt{t s} \cos \t \right)  \\
       &\times \e^{- \sqrt{t s}  \cos \t}  j_{\a - \f{1}{2}} \left(\sqrt{t s} \sin \t\right) (\sin \t)^{2 \a}\d\t,  \notag
\end{align}
whereas for $\a = -\f12$, the formula is given in \cite{Ma} and attributed to J. Boersma,
\begin{align} \label{eq:productL2}
& \frac{L_n^{-\f12}(t) L_n^{-\f12} (s)}{L_n^{-\f12}(0)}  = \frac12 \left[\e^{- \sqrt{t s}} L_n^{-\f12} (t + s +2 \sqrt{ts})+
\e^{ \sqrt{ts }} L_n^{-\f12} (t + s - 2 \sqrt{ts}) \right] \\ 
    & \qquad\qquad - ts  \int_0^\pi L_n^{-\f12} \left(t +s + 2 \sqrt{ts} \cos \t \right)  \e^{- \sqrt{ts}  \cos \t} 
      j_1 \left(\sqrt{ts} \sin \t\right)  \sin \t \d\t.  \notag
\end{align}
The convolution is defined as an integral operator with the kernel $T_t g(s)$, where $T_t$ is defined by
$T_t L_n^\a(s) = L_n^\a(t)L_n^\a(s) /L_n^\a(0)$ for all $n \ge 0$. The convolution is bounded in the space 
$L^p_\a(\RR_+)$ that has the norm defined by \cite{GM}
$$
      \|g\|_{p, u(\a)} =  \bigg( b_\a \int_{\RR_+} \left |g(t) \e^{-t/2} \right |^p t^{\a} \d t \bigg)^{\f1 p},  
         \quad 1 \le p < \infty,
$$
where $b_\a = 1/\Gamma(\a+1)$ and, for $p = \infty$, 
$$
\|g\|_{\infty} = \|g\|_{\infty, u(\a)} : = \esssup \left\{|g(t)| \e^{- t/2}: t > 0\right\}.
$$ 
For $p =2$, the norm $ \|\cdot\|_{2, u(\a)}$ coincides with the usual norm of $L^2(\RR_+, w_\a)$. 

For the summability of the Laguerre expansions, one often considers the Ces\`aro mans. 
For $\delta \in \RR$, the Ces\`aro means of the sequence $\{a_n\}_{n=0}^\infty$ are defeind by 
$$
  s_n^\delta = \frac{1}{A_n^\delta} \sum_{k=0}^n A_{n-k}^{\delta} a_k, \qquad A_n^\delta =\binom{n+\delta}{n}, 
  \quad n =1,2,\ldots, 
$$
which can also be defined by the relation 
\begin{equation} \label{eq:Cesaro}
   \frac{1}{(1-r)^{\delta+1}} \sum_{n=0}^\infty a_n r^n = \sum_{n=0}^\infty A_n^\delta s_n^\delta r^n.
\end{equation}
We denote the $(C,\delta)$ means of the Fourier-Laguerre series by $s_n^\delta (w_\a; f)$. The case 
$\delta =0$ is the partial sum operator, which we denote by $s_n (\varpi_\a; f)$; that is, 
$$
   s_n (w_\a; f) =  \sum_{k=0}^n  f_n^\a L_n^\a \qquad \hbox{and}\qquad s_n^\delta(w_\a; f)  =  
             \frac{1}{A_n^\delta} \sum_{k=0}^n A_{n-k}^{\delta} f_k^\a L_k^\a. 
$$ 

For $\a > -1$, the Poisson kernel of the Laguerre polynomials satisfies
\begin{equation}\label{eq:Mehler}
   \sum_{n=0}^\infty \frac{L_n^\a(x) L_n^\a(y)}{\binom{n+\a}{n}} r^n 
   = \frac{\Gamma(\a+1)}{(1-r)^{\a+1}} \e^{- \frac{(x+y) r}{1-r}}(x y r)^{-\f \a 2} I_\a\left(\frac{2 \sqrt{xy r}}{1-r}\right),
\end{equation}
where $I_\a(z) = e^{- \i \a \pi /2}J_\a (z e^{\i \pi /2})$ is the modified Bessel function of the first kind 
\cite[(10.27.6)]{DLMF}. For $\a \ge - \f12$, $I_\a$ has an integral representation \cite[(10.32.4)]{DLMF},
which leads to the following formula for the Poisson kernel: for $\a \ge - \f12$, 
\begin{equation}\label{eq:Mehler2}
   \sum_{n=0}^\infty \frac{L_n^\a(x) L_n^\a(y)}{\binom{n+\a}{n}} r^n 
   = \frac1{(1-r)^{\a+1}} \e^{- \frac{(x+y) r}{1-r}} c_\a \int_{-1}^1 \e^{\frac{2 \sqrt{x y r}}{1-r} v} (1-v^2)^{\a-\f12} \d v,
\end{equation}
where the constant $c_\a$ is given by 
\begin{equation}\label{eq:cl}
c_\a = \frac{\Gamma(\a+1)}{\Gamma(\f12) \Gamma(\a+\f12)}, \qquad \a > -\tfrac 12, 
\end{equation}
and the formula holds under the limit for $\a = -\f12$ using 
\begin{equation}\label{eq:limitInt}
 \lim_{a \to \f12 +} c_\a \int_{-1}^1 f(t) (1-t^2)^{a-\f12} \d t  = \f {f(1) + f(-1)}2. 
\end{equation}

Finally, we will need the following lemma \cite[Lemma 1]{Ma} or \cite[Lemma 1.5.4]{Th}.

\begin{lem}\label{lem:estLn}
Let $\a + \b > -1$. Then
$$
 b_{\a} \int_0^\infty \left |L_n^{\a+\b}(t) \right | t^{\a/2} \e^{-t/2} \d t 
   \sim \begin{cases} n^{\f{\a+1}2}, & \hbox{if $\beta <  \f32$} \\
   n^{\f{\a+1}2}\log n, &  \hbox{if $\beta = \f32$} \\
    n^{\f{\a}2 + \b -1},  &  \hbox{if $\beta > \f32$}.          \end{cases}
$$
\end{lem}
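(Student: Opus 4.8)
Write $\g=\a+\b$ and recall the orthonormal Laguerre function
$\CL_n^\g(t)=\big(n!/\Gamma(n+\g+1)\big)^{1/2}t^{\g/2}\e^{-t/2}L_n^\g(t)$. Since $\a/2-\g/2=-\b/2$, a one-line manipulation gives
\[
 |L_n^{\a+\b}(t)|\,t^{\a/2}\e^{-t/2}=\Big(\tfrac{\Gamma(n+\g+1)}{n!}\Big)^{1/2}t^{-\b/2}\,|\CL_n^\g(t)|,
\]
and by Stirling $\big(\Gamma(n+\g+1)/n!\big)^{1/2}\sim n^{\g/2}=n^{(\a+\b)/2}$. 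Thus, up to the harmless constant $b_\a$, the quantity to estimate is
\[
 I_n\sim n^{(\a+\b)/2}\,J_n,\qquad J_n:=\int_0^\infty t^{-\b/2}\,|\CL_n^\g(t)|\,\d t,
\]
the integral converging at the origin precisely because $t^{-\b/2}|\CL_n^\g(t)|\sim t^{\a/2}$ there (this is where the standing assumption $\a>-2$ enters).

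The plan for $J_n$ is to split $(0,\infty)$ according to the size of $t$ relative to $\nu:=4n+2\g+2\sim 4n$ and to feed in the classical pointwise bounds of Szeg\H{o} and Askey--Wainger:
\[
 |\CL_n^\g(t)|\lesssim\begin{cases}(\nu t)^{\g/2}, & 0<t\le 1/\nu,\\ (\nu t)^{-1/4}, & 1/\nu\le t\le \nu/2,\\ \nu^{-1/4}(\nu^{1/3}+|\nu-t|)^{-1/4}, & \nu/2\le t\le 3\nu/2,\\ \e^{-ct}, & t\ge 3\nu/2.\end{cases}
\]
Integrating these four bounds against $t^{-\b/2}\,\d t$ is routine and yields: the region $(0,1/\nu]$ contributes $\nu^{\b/2-1}$ (via $\int_0^{1/\nu}t^{\a/2}\d t$); the turning-point region $[\nu/2,3\nu/2]$ contributes $\nu^{1/2-\b/2}$; the tail $[3\nu/2,\infty)$ is exponentially small; and the middle region $[1/\nu,\nu/2]$ reduces to $\nu^{-1/4}\int_{1/\nu}^{\nu/2}t^{-\b/2-1/4}\d t$, whose value is governed by the sign of $3/4-\b/2$. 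This last integral is the source of the trichotomy: for $\b<3/2$ it is dominated by its upper endpoint and gives $\nu^{1/2-\b/2}$; for $\b>3/2$ it is dominated by the endpoint $1/\nu$ and gives $\nu^{\b/2-1}$; and for $\b=3/2$ the exponent is exactly $-1$, producing $\nu^{-1/4}\log\nu$. Collecting the dominant term and multiplying by $n^{(\a+\b)/2}$ recovers $n^{(\a+1)/2}$, $n^{(\a+1)/2}\log n$, and $n^{\a/2+\b-1}$ respectively, which establishes the upper bounds.

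For the matching lower bounds, the easy case is $\b>3/2$, where the leading order $\nu^{\b/2-1}$ already comes from the region near the origin: there $L_n^\g$ has fixed sign (it lies below its first zero $x_{n,1}\sim c/n$), so $|\CL_n^\g(t)|$ is comparable to its envelope $(\nu t)^{\g/2}$ and the lower bound follows by integrating over a sub-interval $[0,\va/n]$. The delicate case is $\b\le 3/2$, where the leading contribution comes from the oscillatory bulk; here I would replace the pointwise upper bound by the Plancherel--Rotach (Hilb-type) asymptotics $\CL_n^\g(t)\approx A(t)\cos\phi(t)$, with amplitude $A(t)$ matching the stated envelope and a rapidly increasing phase $\phi$, and then bound $\int|A(t)\cos\phi(t)|\,\d t$ from below by averaging $|\cos|$ over full periods of $\phi$, which recovers a constant multiple of $\int A(t)\,\d t$ and hence the correct order $\nu^{1/2-\b/2}$ (respectively $\nu^{-1/4}\log\nu$ at $\b=3/2$). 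I expect this lower bound in the oscillatory regime---controlling the cancellation in $|L_n^\g|$ through the amplitude/phase description and the period-averaging of $|\cos|$---to be the main technical obstacle, together with the careful bookkeeping of the borderline logarithmic factor at $\b=3/2$.
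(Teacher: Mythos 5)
The paper offers no proof of this lemma at all---it is quoted from \cite[Lemma 1]{Ma} and \cite[Lemma 1.5.4]{Th}---so the comparison is with the proof in those references, and your argument is essentially that proof: the same reduction to the Laguerre functions $\mathcal{L}_n^{\gamma}$, the same Askey--Wainger four-region pointwise bounds, the same endpoint analysis of $\nu^{-1/4}\int_{1/\nu}^{\nu/2} t^{-\beta/2-1/4}\,\d t$ producing the trichotomy at $\beta=3/2$ (all of your exponent bookkeeping checks out), and the same two-pronged lower bound (fixed sign below the first zero for $\beta>3/2$; Szeg\H{o}/Hilb-type asymptotics with period averaging of $|\cos|$ in the oscillatory bulk for $\beta\le 3/2$). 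The one step you defer is exactly how the cited sources conclude, and it does go through: in the bulk the asymptotic main term has the amplitude you state and the error terms are of strictly lower order after integration, so the averaging argument recovers $c\int A(t)\,\d t$. Your side remark is also worth recording: finiteness of the integral forces $\alpha>-2$, which is \emph{not} implied by the stated hypothesis $\alpha+\beta>-1$, so this is an implicit assumption of the lemma (harmless here, since the paper only applies it with $\alpha\ge 0$).
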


\subsection{Jacobi and Gegenbauer Jacobi polynomials}
We will also need the Jacobi polynomials, denoted by $P_n^{(\a,\b)}$, that satisfy the orthogonal relation
$$
   c'_{\a,\b} \int_{-1}^1 P_n^{(\a,\b)} (t) P_m^{(\a,\b)} (t) (1-t)^\a (1+t)^\b \d t = h_n^{(\a,\b)} \delta_{n,m}, \quad
     \a,\b> -1,
$$
where the normalization constant $c'_{\a,\b} = 2^{-\a-\b-1}c_{\a,\b}$ with 
\begin{equation}\label{eq:c_ab}
   c_{\a,\b} = \frac{\Gamma(\a+\b+2)}{\Gamma(\a+1)\Gamma(\b+1)}
\end{equation}
and $h_n^{(\a,\b)}$, the square of the norm of $P_n^{(\a,\b)}$, is given by 
$$
  h_n^{(\a,\b)} = \frac{(\a+1)_n(\b+1)_n (\a+\b+n+1)}{n! (\a+\b+2)_n (\a+\b+2n+1)}.
$$

The Gegenbauer polynomials, denoted by $C_n^\l$, satisfy the orthogonal relation
$$
   c_\l  \int_{-1}^1 C_n^\l (t) C_m^\l (t) (1-t^2)^{\l-\f12} \d t =  \frac{\l}{n+\l}C_n^(1) \delta_{n,m},   \quad \l > \tfrac12,
$$
and normalized by $C_n^\l(1) = (2\l)_n/n!$, where the constant $c_\l$ is as in \eqref{eq:cl}. When $\l = 0$, the Gegenbauer polynomial reduces to the Chebyshev polynomial $T_n$ of the first kind, 
defined by $T_n(\cos \t) = \cos n \t$. For convenience, we define 
\begin{equation}\label{eq:Zn}
   Z_n^\l (t):= \frac{n+\l}{\l} C_n^\l(t), \quad \l > 0, \quad \hbox{and} \quad 
         Z_n^0(t):= \begin{cases}2 T_n(t) & n \ge 1 \\ 1 & n =0 \end{cases},
\end{equation}
which will be used multiple times in our discussion below. 

\subsection{Orthogonal polynomials on the unit sphere and the unit ball}
For $\k = (\k_1,\ldots,\k_d) \in \RR^d$, we define the weight function
\begin{equation}\label{eq:hk}
  h_\k(x) = \prod_{k=0}^d |x_i |^{\k_i}, \qquad \k_i \ge 0, \quad x\in \RR^d,
\end{equation}
which is invariant under the group $\ZZ_2^d$ of sign changes. On the unit sphere $\sph$ of $\RR^d$,
we consider orthogonal polynomials with respect to the inner product
$$
  \la f, g\ra_{\SS} = c_\k^h \int_{\sph} f(\xi) g(\xi) h_\k^2(\xi)\d \s_\SS (\xi), 
  \qquad c_\k^h:= \frac{\Gamma(|\k|+\f d2)}{2 \Gamma(\k_1+\f 12)\cdots  \Gamma(\k_d+\f 12)},
$$
where $\d \s_\SS$ denotes the surface measure of $\sph$, $|\k| = \k_1+\cdots + \k_d$, and it is 
normalized so that $\la 1,1\ra_\SS = 1$. The orthogonal polynomials for this inner product are a special 
case of Dunkl's $h$-harmonics for weight functions invariant under reflection groups. Let $\CH_n(h_\k^2)$ 
be the of $h$-harmonics of degree $n$ in $d$ variables. It consists of homogeneous polynomials $Y$ 
of degree $n$ in $d$-variables that satisfy $\Delta_h Y =0$, where $\Delta_h$ is the Dunkl Laplacian, a 
second order differential-difference operator \cite{D89}, which we shall not specify. If $Y \in \CH_n^d(h_\k^2)$, 
then $Y(x) = r^n Y(\xi)$, $x = r \xi$ with $\xi \in \sph$. We shall also use $\CH_n^d(h_\k^2)$ to denote the 
space of $Y(\xi)$, the restriction of $h$-harmonics on the unit sphere or spherical $h$-harmonics.
When $\k =0$, or $h_\k(x) =1$, $\Delta_h$ becomes the ordinary Laplace operator $\Delta$ and the 
space $\CH_n^d(h_0^2)$ becomes the space $\CH_n^d$ of ordinary spherical harmonics of degree $n$
in $d$ variables. It is known that 
$$
    a_n^d : = \dim \CH_n^d = \binom{n+d-1}{n} - \binom{n+d-3}{n-2}.
$$ 

The spherical $h$-harmonics are orthogonal polynomials in the sense that $\la Y_n, Y_m \ra_\SS =0$ 
if $Y_n$ and $Y_m$ are spherical $h$-harmonics of degree $n$ and $m$ with $n\ne m$. An explicit 
orthogonal basis of $\CH_n^d(h_k^2)$ can be given in terms of the Jacobi polynomials (cf. \cite[p. 229]{DX}).
The reproducing kernel of $\CH_n^d(h_\k^2)$ satisfies a closed form formula, which is a generalization 
of the addition formula for the ordinary spherical harmonics.  For $n = 0,1,2,\ldots$, let 
$\sP_n(h_\k^2; \cdot,\cdot)$ be the reproducing kernel of $\CH_n^d(h_\k^2)$. Then \cite{X97b}
\begin{equation} \label{eq:reprod-h}
  \sP_n(h_\k^2; \xi,\eta) = V_\k \left[ Z_n^{|\k|+ \f{d-2}{2}}\la \xi, \cdot\ra \right] (\eta), \quad \xi,\eta\in \sph,
\end{equation}  
where $V_\k$ is the integral operator, called intertwining operator, defined by 
\begin{equation} \label{eq:intertwin}
   V_\k f(x) = c_{\k- \frac{\mathbf{1}}2} \int_{[1,1]^d}  
                                       f (x_1 u_1, \cdots, x_d   u_d) \Phi_\k(u) \d u,
\end{equation}
where $c_{\k- \frac{\mathbf{1}}2} = c_{\k_1-\f12} \cdots c_{k_d -\f12}$ with $c_\l$ as defined 
in \eqref{eq:cl} and 
\begin{equation} \label{eq:Phi} 
  \Phi_\k(u) = \prod_{i=1}^d (1+u_i) (1-u_i^2)^{\k_i -1}. 
\end{equation}
If $\k_i = 0$ for some $\k_i$, then the identity \eqref{eq:intertwin} is defined under the limit \eqref{eq:limitInt}. 
In particular, if $\k_i = 0$ for all $i$, then $h_\k(x) = 1$ and the right-hand side of \eqref{eq:reprod-h} 
reduces to $Z_n^{\f{d-2}{2}}(\la x, y\ra)$, in which case \eqref{eq:reprod-h} degenerates to the addition 
formula of the ordinary spherical harmonics. We need the following lemma proved in \cite{X97b}.

\begin{lem}
Let $g: \RR\mapsto \RR$ be a function such that both integrals below are defined. Let 
$\a_\k = |k| + \f{d-2}{2}$. Then, for $x \in \RR^d$,
\begin{equation} \label{eq:intVk} 
  c_\k^h \int_{\sph} V_\k [f(\la x,\cdot \ra)] (\eta) h_\k^2(\eta) d\s_\SS (\eta) = 
       c_{\a_\k}\int_{-1}^1  f(\|x\| v)(1-v^2)^{\a_\k -\f12}\d v. 
\end{equation}
\end{lem}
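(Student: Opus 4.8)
The plan is to collapse the identity onto a single orthogonal basis, the Gegenbauer polynomials, and then read off both sides from the reproducing-kernel formula \eqref{eq:reprod-h}.

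First I would reduce to the case $x\in\sph$. Writing $x=\|x\|\xi$ with $\xi\in\sph$ (the case $x=0$ being trivial), we have $\la x,\eta\ra=\|x\|\la\xi,\eta\ra$, so that with $g(s):=f(\|x\|s)$ the defining formula \eqref{eq:intertwin} gives $V_\k[f(\la x,\cdot\ra)](\eta)=V_\k[g(\la\xi,\cdot\ra)](\eta)$, while the right-hand side becomes $c_{\a_\k}\int_{-1}^1 g(v)(1-v^2)^{\a_\k-\f12}\,\d v$. Thus it suffices to prove, for $\xi\in\sph$,
\[
  c_\k^h \int_{\sph} V_\k[g(\la\xi,\cdot\ra)](\eta)\, h_\k^2(\eta)\,\d\s_\SS(\eta) = c_{\a_\k}\int_{-1}^1 g(v)(1-v^2)^{\a_\k-\f12}\,\d v.
\]

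Second, both sides are linear in $g$. Since $\la\xi,\cdot\ra$ takes values in the compact interval $[-1,1]$, since $V_\k$ preserves constants (a short computation using \eqref{eq:Phi} gives $V_\k 1=1$), and since both measures are finite, each side is a continuous linear functional of $g$ in the uniform norm on $[-1,1]$. By the Weierstrass theorem it is therefore enough to verify the identity for the Gegenbauer polynomials $g=Z_n^{\a_\k}$, $n\ge0$, of \eqref{eq:Zn}, whose span is all polynomials; the passage to general $f$ for which both integrals are defined then follows by a routine approximation argument.

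Third, for $g=Z_n^{\a_\k}$ I would invoke \eqref{eq:reprod-h}, which states precisely that $V_\k[Z_n^{\a_\k}\la\xi,\cdot\ra](\eta)=\sP_n(h_\k^2;\xi,\eta)$, the reproducing kernel of $\CH_n^d(h_\k^2)$. Hence the left-hand side equals $c_\k^h\int_{\sph}\sP_n(h_\k^2;\xi,\eta)\cdot 1\cdot h_\k^2(\eta)\,\d\s_\SS(\eta)$. For fixed $\xi$ one has $\sP_n(h_\k^2;\xi,\cdot)\in\CH_n^d(h_\k^2)$, and the constant $1$ spans $\CH_0^d(h_\k^2)$, so the orthogonality of $h$-harmonics of distinct degrees makes this integral vanish for $n\ge1$ and equal the reproducing value $1$ for $n=0$. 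On the right-hand side, $Z_n^{\a_\k}$ is a constant multiple of $C_n^{\a_\k}$, which is orthogonal to $C_0^{\a_\k}\equiv 1$ against $(1-v^2)^{\a_\k-\f12}$, so that integral also vanishes for $n\ge1$, while for $n=0$ the normalization \eqref{eq:cl} of $c_{\a_\k}$ gives $c_{\a_\k}\int_{-1}^1(1-v^2)^{\a_\k-\f12}\,\d v=1$. Both sides thus equal $\delta_{n,0}$, which proves the identity on the basis.

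The steps requiring genuine care are the extension from polynomials to arbitrary admissible $f$ and the degenerate parameter values (for instance $\a_\k=0$, which occurs when $\k=0$ and $d=2$, or when some $\k_i=0$): there one uses the Chebyshev normalization $Z_n^0$ of \eqref{eq:Zn} and interprets the weighted integral through the limit \eqref{eq:limitInt}, after which the orthogonality argument is unchanged. I expect this density/convergence bookkeeping to be the only technical obstacle, the algebraic identity itself being immediate once \eqref{eq:reprod-h} and the normalizations are in hand.
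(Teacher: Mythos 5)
The paper itself contains no proof of this lemma: it is quoted verbatim from \cite{X97b}, so your argument can only be measured against the original proof in that reference, not against anything in the present text. On its own terms your proof is correct. The reduction to $\xi\in\sph$ is immediate from $\la x,\cdot\ra=\|x\|\la\xi,\cdot\ra$; both sides are positive linear functionals of norm at most $1$ on $C[-1,1]$, since $V_\k$ and $c_\k^h h_\k^2\,\d\s_\SS$ are both integrations against probability measures; both sides equal $\delta_{n,0}$ on $g=Z_n^{\a_\k}$ --- the left-hand side because \eqref{eq:reprod-h} turns it into the pairing of $\sP_n(h_\k^2;\xi,\cdot)\in\CH_n^d(h_\k^2)$ with the constant $1\in\CH_0^d(h_\k^2)$, the right-hand side by Gegenbauer orthogonality and the normalization \eqref{eq:cl}; and Weierstrass density together with the Riesz representation theorem (two finite Borel measures on $[-1,1]$ agreeing on continuous functions coincide) extends the identity to every $f$ for which both integrals are defined.

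The route is, however, genuinely different from --- in fact opposite to --- the one in \cite{X97b}, and this carries a circularity risk you should address explicitly. In \cite{X97b} the integration identity is the primary result: it is proved there for arbitrary polynomials $f$, not just ridge functions, by expanding $f$ into terms $\|y\|^{2j}Y_m(y)$ with $Y_m$ an ordinary harmonic and using that $V_\k$ commutes with multiplication by $\|y\|^2$ and maps ordinary harmonics to $h$-harmonics, so every term with $m\ge 1$ integrates to zero on both sides; the closed form \eqref{eq:reprod-h} of the reproducing kernel is then obtained downstream, as an application of the lemma. Your proof inverts that order by taking \eqref{eq:reprod-h} as the input. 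Inside the present paper, where \eqref{eq:reprod-h} is quoted as known, this is legitimate, and it is also sound in absolute terms because for the group $\ZZ_2^d$ the addition formula admits proofs independent of the lemma (by explicit computation with product formulas for Gegenbauer and Jacobi polynomials, cf. \cite{DX}); but you must point to such an independent proof, since otherwise you have only established the implication \eqref{eq:reprod-h} $\Rightarrow$ \eqref{eq:intVk}. What your approach buys is brevity and a reduction to a single orthogonality computation; what the original approach buys is logical priority and generality, as it applies to all reflection groups, where no closed-form analogue of \eqref{eq:reprod-h} is available.
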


The closed formula \eqref{eq:reprod-h} has been instrumental for recent progress on analysis on the 
sphere equipped with $h_\k^2$ weight; see, for example, \cite{DaiX,DX}.

\subsection{Orthogonal polynomials on the unit ball}
It is known \cite[Section 4.4]{DX} that orthogonal polynomials on the unit ball is closely related to orthogonal 
polynomials on the unit sphere. Let $h_\k$ be defined in \eqref{eq:hk}. For the unit ball $\BB^d$ of $\RR^d$, 
we let 
$$
       \varpi_{\k,\mu} (x) = h_\k^2(x) (1-\|x\|^2)^{\mu-\f12}, \quad  \mu> -\tfrac12, \quad  x \in \BB^d,
$$
and consider orthogonal polynomials with respect to the inner product 
$$
  \la f,g\ra_{\BB} = b_{\k,\mu} \int_{\BB^d} f(x) g(x) \varpi_{\k,\mu}(x) \d x,
$$
where $b_{\k,\mu}$ is a normalization constant such that $\la 1,1\ra_{\BB} =1$. For $\k \in \RR^d$, 
we define $\k_{d+1} = \mu$ and $\bkappa = (\k,\k_{d+1}) \in \RR^{d+1}$. For $x \in \BB^d$, we define 
$x_{d+1} = \sqrt{1-\|x\|^2}$ and $X = (x, x_{d+1})$ as well as $X^- = (x, x_{d+1})$; then 
$X \in \SS^{d}$ and $X^- \in \SS^{d}$. It is easy to verify that the measure $\varpi_{\k,\mu} \d x $ on $\BB^d$ 
satisfies 
$$
   \varpi_{\k,\mu}(x) \d x = \prod_{i=1}^{d+1} |x_i |^{2\k_i} d\s(X) = h_\bkappa^2(X)\d \s_{\SS^d} (X), 
$$
where $\d\s_{\SS^d}$ is the surface measure of $\SS^d$. By symmetry, it follows that 
$$
   \int_{\BB^d} f(x) \varpi_{\k,\mu}(x) \d x  =  \f12 \int_{\SS^d} \left[f(X) + f(X^-) \right] h_\bkappa^2(X)\d \s_{\SS^d}(X).
$$
Let $\CV_n(\BB^d, \varpi_{\k,\mu})$ be the space of orthogonal polynomials with respect to the inner 
product $\la \cdot,\cdot\ra_\BB$. We then have the decomposition
$$
    \CH_n^{d+1}(h_\bkappa) = \CV_n(\BB^d, \varpi_{\k,\mu}) \bigoplus x_{d+1} \CV_{n-1}(\BB^d, \varpi_{\k,\mu+1}).
$$
In particular, a basis for $\CV_n(\BB^d, \varpi_{\k,\mu})$ can be derived from those spherical $h$-harmonics 
in $\CH_n^{d+1}(h_\bkappa)$ that are even in the $x_{d+1}$ variable. Moreover, we can also derive a close
formula for the reproducing kernel of $\CV_n(\BB^d, \varpi_{\k,\mu})$ through this correspondence. 

\section{Laguerre expansions on the conic surface}
\setcounter{equation}{0}

We work on the conic surface that is standardized by 
$$
  \VV_0^{d+1} =\left \{(x,t) \in \RR^{d+1}: \|x\| = t, \, x \in \RR^d, t \in \RR_+ \right\}, \quad d \ge 2. 
$$
For $\k \ge 0$, we define the Laguerre type weight function $\sw_{\k}$ on the conic surface by
$$
 \sw_{\k}(x,t) =  h_\k^2(x) t^{-1} \e^{-t}, \qquad  (x,t) \in \VV_0^{d+1}, 
$$
where $h_\k$ is given in \eqref{eq:hk}. In the first subsection, we discuss orthogonal polynomials with 
respect to $\sw_\k$. In the second subsection we derive closed form formulas for the Poisson kernel and 
the reproducing kernels of these polynomials.

\subsection{Orthogonal polynomials}
For $d \ge 2$, we define the inner product with respect to $\sw_{\k}$ on the polynomial space 
$\RR[x,t] / \la \|x\|^2 - t^2\ra$ by 
\begin{align*}
    \la f,g\ra_{\k} &=  \ssb_{\k} \int_{\VV_0^{d+1}} f(x,t) g(x,t)  \sw_{\k}(x,t)  \d \sigma(x,t),
\end{align*}
where $\d \s(x,t)$ denotes the Lebesgue measure on the conic surface and $\ssb_{\k}$ is the normalization 
constant chosen so that $\la 1, 1 \ra_{\k} =1$. The value of $\ssb_\k$ is given by
$$
  \ssb_{\k} = \frac{1}{\int_{\VV_0^{d+1}} \sw_{\k}(x,t)\d \sigma(x,t)} =  \frac{c_\k^h}{\Gamma(2|\k|+ d-1)}
    = b_{2|\k|+d-2} c_\k^h,
$$
evaluated by writing the integral over $\VV_0^{d+1}$ as, using the spherical-polar coordinates,  
\begin{equation} \label{eq:intV0}
    \int_{\VV_0^{d+1}} f(x,t) d\s(x,t) = \int_0^\infty t^{d-1} \int_{\sph} f(t\xi, t) \d\s_\SS (\xi) \d t.
\end{equation}
Let $\CV_n(\VV_0^{d+1},\sw_{\k})$ denote the space of orthogonal polynomials of degree $n$ with 
respect to this inner product. The space has the same dimension as that of $\CH_n^{d+1}$, so that 
$$
   \dim \CV_n(\VV_0^{d+1},\sw_{\k})  = \binom{n+d}{n}- \binom{n+d-2}{n-2}. 
$$
Using \eqref{eq:intV0}, we see that a family of orthogonal polynomials on the conic surface can be
given in terms of $h$-spherical harmonics and the Laguerre polynomials. 

\begin{prop}\label{prop:OPV0}
Let $\{Y_\ell^m: 1\le \ell \le a_m^d\}$ be an orthonormal basis of $\CH_m^d(h_\k^2)$. Define 
\begin{equation}\label{eq:coneLsf}
   \sL_{m,\ell}^{n}(x,t) = L_{n-m}^{2m + 2 |\k| + d-2}(t) Y_\ell^m(x), \quad 0 \le m \le n, \quad 1\le \ell \le a_m^d.
\end{equation}
Then $\{\sL_{m,\ell}^n: 0 \le m \le n, \quad 1\le \ell \le a_m^d\}$ is an orthogonal basis of 
$\CV_n(\VV_0^{d+1},\sw_{\k})$. Moreover, the norm square of $\sL_{m,\ell}^n$ is given by
$$
  \sh_{m,n}: = \la \sL_{m,\ell}^n, \sL_{m,\ell}^n\ra_{\k} = \frac{(2|\k|+d-1)_{n+m}}{(n-m)!}. 
$$
\end{prop}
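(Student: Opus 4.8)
The plan is to pass to spherical--polar coordinates via \eqref{eq:intV0} and exploit homogeneity to separate each inner product into a radial integral against a Laguerre weight and an angular integral over $\sph$ against $h_\k^2$. The key observation is that on $\VV_0^{d+1}$ a point is written as $(t\xi,t)$ with $\xi\in\sph$, so homogeneity gives $Y_\ell^m(t\xi)=t^m Y_\ell^m(\xi)$ and $h_\k^2(t\xi)=t^{2|\k|}h_\k^2(\xi)$; hence $\sL_{m,\ell}^n(t\xi,t)=t^m L_{n-m}^{2m+2|\k|+d-2}(t)Y_\ell^m(\xi)$. The power of $t$ collected from $t^{d-1}$ in \eqref{eq:intV0}, from $t^{-1}$ and $t^{2|\k|}$ in $\sw_\k$, and from the two factors $t^m,t^{m'}$ is, when $m=m'$, exactly $t^{2m+2|\k|+d-2}$, so the radial weight becomes the Laguerre weight $t^\a\e^{-t}$ with $\a=2m+2|\k|+d-2$. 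This is precisely the reason for the index chosen in \eqref{eq:coneLsf}, and it is the mechanism that makes the whole computation go through.

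First I would plug $\sL_{m,\ell}^n(t\xi,t)$ into the inner product and use \eqref{eq:intV0} to factor it as $\la\sL_{m,\ell}^n,\sL_{m',\ell'}^{n'}\ra_\k=\ssb_\k\,I_{\mathrm{rad}}\,I_{\mathrm{ang}}$, where
\begin{equation*}
  I_{\mathrm{ang}}=\int_{\sph}Y_\ell^m(\xi)Y_{\ell'}^{m'}(\xi)h_\k^2(\xi)\,\d\s_\SS(\xi).
\end{equation*}
Orthogonality of spherical $h$-harmonics makes $I_{\mathrm{ang}}$ vanish unless $m=m'$ and, for the orthonormal basis, $\ell=\ell'$, in which case $I_{\mathrm{ang}}=1/c_\k^h$. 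It therefore suffices to treat $m=m'$; writing $\a=2m+2|\k|+d-2$, the collected power of $t$ is exactly $\a$ and the radial factor is the Laguerre orthogonality integral
\begin{equation*}
  I_{\mathrm{rad}}=\int_0^\infty L_{n-m}^{\a}(t)L_{n'-m}^{\a}(t)\,t^\a\e^{-t}\,\d t=\Gamma(\a+1)\,h_{n-m}^{\a}\,\delta_{n,n'}.
\end{equation*}
Hence the three Kronecker deltas force $n=n'$, $m=m'$, $\ell=\ell'$, so $\{\sL_{m,\ell}^n\}$ is mutually orthogonal across all admissible indices.

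It remains to upgrade this from an orthogonal set to a basis of $\CV_n(\VV_0^{d+1},\sw_\k)$, and here is the step I expect to require the most care: orthogonality \emph{within} degree $n$ is not enough, one must also know each $\sL_{m,\ell}^n$ is orthogonal to every polynomial of lower degree. Each $\sL_{m,\ell}^n$ is a nonzero polynomial of degree at most $n$, and there are $\sum_{m=0}^n a_m^d$ of them; I would check the combinatorial identity $\sum_{m=0}^n a_m^d=\binom{n+d}{n}-\binom{n+d-2}{n-2}=\dim\CV_n(\VV_0^{d+1},\sw_\k)$ by telescoping, using $a_m^d=\binom{m+d-1}{d-1}-\binom{m+d-3}{d-1}$. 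The orthogonality to lower degrees I would then extract from the cross-degree orthogonality already proved: setting $W_N=\mathrm{span}\{\sL_{m,\ell}^N\}$, the spaces $W_0,\dots,W_n$ are mutually orthogonal subspaces of the space $\Pi_n$ of polynomials of degree at most $n$ on $\VV_0^{d+1}$, with $\sum_{N=0}^n\dim W_N=\sum_{N=0}^n\dim\CV_N=\dim\Pi_n$. Thus $\Pi_n=\bigoplus_{N=0}^n W_N$ orthogonally; applying this at each level $N$ identifies $W_N$ with the orthogonal complement of $\Pi_{N-1}$ in $\Pi_N$, that is $W_N=\CV_N$. Consequently $\{\sL_{m,\ell}^n\}$ is an orthogonal basis of $\CV_n$.

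Finally, the norm is read off from the diagonal case of the computation above,
\begin{equation*}
  \sh_{m,n}=\ssb_\k\cdot\Gamma(\a+1)h_{n-m}^{\a}\cdot\frac{1}{c_\k^h}
  =\frac{c_\k^h}{\Gamma(2|\k|+d-1)}\cdot\Gamma(\a+1)\frac{(\a+1)_{n-m}}{(n-m)!}\cdot\frac{1}{c_\k^h}.
\end{equation*}
The constant $c_\k^h$ cancels, and with $\a+1=2m+2|\k|+d-1$ the product $\Gamma(\a+1)(\a+1)_{n-m}$ collapses to $\Gamma(n+m+2|\k|+d-1)$, giving $\sh_{m,n}=(2|\k|+d-1)_{n+m}/(n-m)!$ as claimed. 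I expect the separation of variables and the Gamma-function bookkeeping to be routine; the only genuinely subtle point is the passage from pairwise orthogonality to the orthogonal-complement/basis statement.
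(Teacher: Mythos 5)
Your proof is correct and follows essentially the same route as the paper: separation of variables via the spherical-polar formula \eqref{eq:intV0}, orthogonality of the spherical $h$-harmonics and of the Laguerre polynomials with respect to the weight $t^{2m+2|\k|+d-2}\e^{-t}$, and the same Gamma-function bookkeeping for the norm $\sh_{m,n}$. The only difference is that you also spell out the dimension-count and orthogonal-complement argument showing the family actually spans $\CV_n(\VV_0^{d+1},\sw_{\k})$, a routine step the paper leaves implicit.
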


\begin{proof}
Let $\a = |\k|+ \f{d-2}{2}$. Then, using \eqref{eq:intV0}, we obatin
\begin{align*}
  \la \sL_{m,\ell}^n, \sL_{m',\ell'}^{n'} \ra_{\k,-1} \, & =  b_{2\a} 
       \int_0^\infty L_{n-m}^{2m+ \a}(t) L_{n'-m'}^{2m + \a}(t) t^{2m + 2\a} e^{-t} \d t \\
      &\qquad \quad  \times c_\k^h \int_{\sph} Y_\ell^m(\xi) Y_{\ell'}^{m'} (\xi) h_\k^2(\xi) \d \s_\SS(\xi) \\ 
      & = \frac{\Gamma(2m+2\a+1)}{\Gamma(2\a+1)} h_{n-m}^{2m+2\a} \delta_{n,n'} \delta_{m,m'}  \delta_{\ell,\ell''},
\end{align*}
where $h_{n-m}^\a$ is the norm square of the Laguerre polynomial $L_{n-m}^\a$. 
\end{proof}

We could consider orthogonal polynomials with respect to the weight function $h_\k^2(x) t^\b \e^{-t}$ for
$\b \ne -1$. However, the case $\b = -1$ appears to be the most natural setting for several reasons \cite{X20a}.
In particular, the closed form formula of the reproducing kernels is of a relative simple form for $\b = -1$; 
see Theorem \ref{thm:reprod-closed} below. 

We call the polynomials in \eqref{eq:coneLsf} Laguerre type polynomials when $\k \ne 0$, and Laguerre 
polynomials when $\k = 0$, on the conic surface.  
For $f \in L^2(\VV_0^{d+1}, \sw_{\k})$, the Fourier-Laguerre expansion of $f$ is defined by
$$
   f = \sum_{n=0}^\infty \sum_{m=0}^n \sum_{\ell =1}^{a_m^d} \wh f_{m,\ell}^n \sL_{m,\ell}^n 
   \qquad \hbox{with}\qquad  \wh f_{m,\ell}^n =  \frac{\la f, \sL_{m,\ell}^n\ra_{\k}} {\sh_{m,n}}
$$
The projection operator $\proj_n(\sw_\k): L^2(\VV_0^{d+1}, \sw_{\k}) \mapsto \CV_n(\VV_0^{d+1},\sw_{\k})$ 
and the $n$-th partial sum operator $\sS_n(\sw_\k)$ of this expansion are defined by 
$$
    \proj_n \left(\sw_{\k}; f\right) = \sum_{m=0}^n \sum_{\ell =1}^{a_m^d} \wh f_{m,\ell}^n \sL_{m,\ell}^n
    \quad \hbox{and} \quad  \sS_n f\left(\sw_{\k}; f\right)  = \sum_{m=0}^n \proj_m f\left(\sw_{\k}; f\right).
$$
If $f(x,t)$ depends only on $t$, then $\wh f_{m,\ell}^n = 0$ for all $m > 0$ and $\wh f_{0,\ell}^n$ reduces to the 
classical Fourier-Laguerre coefficients of one variable. In particular, the following 
proposition follows readily. 

\begin{prop}
Let $f(x,t) = f_0(t)$ and $f_0 \in L^2(\RR_+; w_{2|\k|+d-2})$. Then the Fourier-Laguerre expansion of $f$ 
on the conic surface becomes the Laguerre expansion of $f_0$ in $ L^2(\RR_+; w_{2|\k|+d-2})$. In particular, 
$$
     \sS_n \left(\sw_{\k}; f, (x,t)\right) = s_n (w_{2|\k|+d-2}; f_0, t), \qquad n= 0,1,2,\ldots. 
$$
\end{prop}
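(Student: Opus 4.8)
The plan is to evaluate the Fourier--Laguerre coefficients $\wh f_{m,\ell}^n$ of $f$ directly and to show that every coefficient with $m \ge 1$ vanishes, so that the double sum over $(m,\ell)$ collapses to a single series in $n$ that reproduces the one-variable expansion. First I would substitute $f(x,t) = f_0(t)$ together with $\sL_{m,\ell}^n(x,t) = L_{n-m}^{2m+2|\k|+d-2}(t) Y_\ell^m(x)$ into $\la f, \sL_{m,\ell}^n\ra_\k$ and pass to spherical--polar coordinates by \eqref{eq:intV0}, writing $x = t\xi$ with $\xi \in \sph$. Since $Y_\ell^m$ is homogeneous of degree $m$ and $h_\k$ of degree $|\k|$, one has $Y_\ell^m(t\xi) = t^m Y_\ell^m(\xi)$ and $h_\k^2(t\xi) = t^{2|\k|}h_\k^2(\xi)$, so the radial and angular integrals separate into
\begin{align*}
  \la f, \sL_{m,\ell}^n\ra_\k = \ssb_\k &\left(\int_0^\infty f_0(t)\, L_{n-m}^{2m+2|\k|+d-2}(t)\, t^{m+2|\k|+d-2}\e^{-t}\,\d t\right) \\
  &\times\left(\int_\sph Y_\ell^m(\xi) h_\k^2(\xi)\,\d\s_\SS(\xi)\right),
\end{align*}
where the power $t^{m+2|\k|+d-2}$ collects the factor $t^{d-1}$ from the measure, $t^m$ from $Y_\ell^m$, $t^{2|\k|}$ from $h_\k^2$, and $t^{-1}$ from $\sw_\k$.

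Next I would dispatch the angular factor. It equals $(c_\k^h)^{-1}\la Y_\ell^m, 1\ra_\SS$, and since $Y_\ell^m \in \CH_m^d(h_\k^2)$ with $m \ge 1$ is orthogonal to the constant function (which spans $\CH_0^d(h_\k^2)$), this integral is zero for every $m \ge 1$; hence $\wh f_{m,\ell}^n = 0$ for all $m \ge 1$. For $m = 0$ the space $\CH_0^d(h_\k^2)$ is one-dimensional, and because $\la 1,1\ra_\SS = 1$ its orthonormal basis element is $Y_1^0 \equiv 1$, so $\sL_{0,1}^n(x,t) = L_n^{2|\k|+d-2}(t)$ and the angular factor is $\int_\sph h_\k^2\,\d\s_\SS$. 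Using $\ssb_\k = b_{2|\k|+d-2}c_\k^h$ and $c_\k^h\int_\sph h_\k^2\,\d\s_\SS = \la 1,1\ra_\SS = 1$, the remaining constant collapses to $b_{2|\k|+d-2}$, whence, with $\a := 2|\k|+d-2$,
\begin{align*}
  \la f, \sL_{0,1}^n\ra_\k = b_\a \int_0^\infty f_0(t) L_n^\a(t)\, t^\a \e^{-t}\,\d t = \la f_0, L_n^\a\ra_\a.
\end{align*}
Since $\sh_{0,n} = (\a+1)_n/n! = h_n^\a$, this shows $\wh f_{0,1}^n$ is exactly the classical Fourier--Laguerre coefficient of $f_0$ with parameter $\a$.

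Finally I would assemble the partial sum. With all $m \ge 1$ terms gone, $\proj_k(\sw_\k; f) = \wh f_{0,1}^k L_k^\a$ depends only on $t$, and summing over $0 \le k \le n$ gives $\sS_n(\sw_\k; f) = \sum_{k=0}^n \wh f_{0,1}^k L_k^\a$, which is precisely the $n$-th Laguerre partial sum $s_n(w_\a; f_0)$, the orthogonal projection of $f_0$ onto $\mathrm{span}\{L_0^\a,\dots,L_n^\a\}$ in $L^2(\RR_+, w_\a)$. I do not expect a genuine analytic obstacle here; the one point requiring care is the bookkeeping of the normalizations, namely checking that the conic constant $\ssb_\k$ together with the surface integral of $h_\k^2$ reduces exactly to the one-variable constant $b_\a$, and likewise that $\sh_{0,n}$ matches $h_n^\a$. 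The same coordinate computation applied to $\|f\|^2$ shows $\|f\|_{L^2(\VV_0^{d+1},\sw_\k)} = \|f_0\|_{L^2(\RR_+, w_\a)}$, which guarantees $f \in L^2(\VV_0^{d+1}, \sw_\k)$ and so makes the expansion legitimate.
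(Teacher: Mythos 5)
Your proof is correct and takes essentially the same approach the paper indicates: the paper asserts without detail that for $f(x,t)=f_0(t)$ the coefficients $\wh f_{m,\ell}^n$ vanish for $m>0$ and reduce to the classical Fourier--Laguerre coefficients for $m=0$, which is exactly what you verify via the spherical--polar decomposition \eqref{eq:intV0}, the homogeneity of $Y_\ell^m$ and $h_\k^2$, and the orthogonality of $Y_\ell^m$ ($m\ge 1$) to constants. Your bookkeeping of the normalizations, namely $\ssb_\k (c_\k^h)^{-1} = b_{2|\k|+d-2}$ and $\sh_{0,n} = h_n^{2|\k|+d-2}$, is accurate, so the argument is complete.
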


The projection operator $\proj_n(\sw_{\k})$ of the Fourier-Laguerre series can be written as an 
integral operator
$$
\proj_n(\sw_{\k}) f = \ssb_{\k} \int_{\VV_0^{d+1}} f(y,s) \sP_n\left(\sw_{\k}; (x,t), (y,s)\right) \sw_{\k}(y,s) \d \s (y,s), 
$$
where $\sP_n(\sw_{\k}; \cdot,\cdot)$ is the reproducing kernel of the space $\CV_n (\VV_0^{d+1}, \sw_{\k})$, 
which can be written in terms of the orthogonal basis $\sL_{m,\ell}^n$ as 
$$
 \sP_n\left(\sw_{\k}; (x,t), (y,s)\right) = \sum_{m=0}^n \sum_{\ell=1}^{a_m^d} 
     \frac{\sL_{m,\ell}^n(x,t)\sL_{m,\ell}^n(y,s)}{\sh_{m,n}}.
$$

The Laguerre type polynomials on the conic surface can be derived from the Jacobi type polynomials on 
the conic surface via a limit process. The latter polynomials are orthogonal with respect to the inner product 
$\la \cdot,\cdot\ra_{\sw_{\k,\g}}$ defined by 
$$
  \la f,g\ra_{\k,\g} = \ssb_{\k,\g} \int_{\VV_0^{d+1}} f(x,t) g(x,t)  \sw_{\k,\g}(x,t) h_\k^2(x,t) \d \s(x,t),
$$
where $\ssb_{\k,\g} = c_\k^h c_{2|\k|+d-1,\g}$, with $c_{\a,\b}$ defined in \eqref{eq:c_ab}, and the weight 
function $\sw_{\k,\g}$ is given by 
$$
    \sw_{\k,\g}(t) =  h_\k^2(x) t^{-1} (1-t)_+^\g, \qquad \k_i \ge 0, \quad \g > - 1,
$$ 
where $(1-t)_+^\g = (1-t)^\g$ if $t < 1$ and $0$ if $t \ge 0$. Let $\CV_n(\VV_0^{d+1},\sw_{\k,\g})$
be the corresponding space of orthogonal polynomials of degree $n$. An orthogonal basis of this space 
can be given by the Jacobi polynomials and spherical $h$-harmonics \cite{X20a}.

\begin{prop}\label{prop:OPV0Jacobi}
Let $\{Y_\ell^m: 1\le \ell \le a_m^d\}$ be an orthonormal basis of $\CH_m^d(h_\k^2)$. Define 
\begin{equation}\label{eq:coneJsf}
   \sJ_{m,\ell}^{n}(x,t) = P_{n-m}^{(2m+2|\k|+d-2, \g)}(1-2t) Y_{\ell}^m(x), \quad 1 \le \ell \le a_m^d, \, \, 0 \le m \le n.\end{equation}
Then $\{\sJ_{m,\ell}^n: 0 \le m \le n, \quad 1\le \ell \le a_m^d\}$ is an orthogonal basis of 
$\CV_n(\VV_0^{d+1},\sw_{\k,\g})$. Moreover, the norm square of $\sJ_{m,\ell}^n$ is given by
$$
  \sh^\sJ_{m,n} = \la  \sJ_{m,\ell}^{n}, \sJ_{m,\ell}^{n}\ra_{\sw_{\k,\g} } =\frac{(2|\k|+d-1)_{2m}}{(2|\k|+\g+d)_{2m}} 
      h_{n-m}^{(2m+2|\k|+d-2,\g)}, 
$$
where $h_n^{(\a,b)}$ is the norm square of the Jacobi polynomial of degree $n$. 
\end{prop}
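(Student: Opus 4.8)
The plan is to imitate the computation proving Proposition~\ref{prop:OPV0}, the only structural change being that the radial Laguerre orthogonality on $\RR_+$ is replaced by Jacobi orthogonality on $[0,1]$. First I would insert $(x,t)=(t\xi,t)$ with $\xi\in\sph$ into the integration formula \eqref{eq:intV0} and exploit the homogeneity relations $Y_\ell^m(t\xi)=t^m Y_\ell^m(\xi)$ and $h_\k^2(t\xi)=t^{2|\k|}h_\k^2(\xi)$. Writing $\a=|\k|+\f{d-2}2$, so that $2\a=2|\k|+d-2$, every power of $t$ collects and the inner product factors into a purely radial and a purely spherical integral:
\begin{align*}
  \la\sJ_{m,\ell}^n,\sJ_{m',\ell'}^{n'}\ra_{\k,\g}
   &=\ssb_{\k,\g}\int_0^1 P_{n-m}^{(2m+2\a,\g)}(1-2t)\,P_{n'-m'}^{(2m'+2\a,\g)}(1-2t)\,t^{m+m'+2\a}(1-t)^\g\,\d t\\
   &\quad\times\int_{\sph}Y_\ell^m(\xi)\,Y_{\ell'}^{m'}(\xi)\,h_\k^2(\xi)\,\d\s_\SS(\xi).
\end{align*}

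Second, I would dispatch the two factors separately. The spherical integral is the pairing $\la\cdot,\cdot\ra_\SS$ up to the constant $c_\k^h$; since $\{Y_\ell^m\}$ is orthonormal in $\CH_m^d(h_\k^2)$ and $h$-harmonics of distinct degree are orthogonal, it equals $\tfrac1{c_\k^h}\delta_{m,m'}\delta_{\ell,\ell'}$. With $m=m'$ forced, the radial factor becomes a genuine Jacobi integral with parameters $(2m+2\a,\g)=(2m+2|\k|+d-2,\g)$, and the substitution $u=1-2t$ turns it into the standard orthogonality integral $\int_{-1}^1 P_{n-m}^{(A,\g)}P_{n'-m}^{(A,\g)}(1-u)^A(1+u)^\g\,\d u$, which vanishes unless $n-m=n'-m$, i.e. $n=n'$. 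This establishes that the system is orthogonal. To upgrade this to a basis I would note that each $\sJ_{m,\ell}^n$ has total degree $n$ on the cone (degree $n-m$ in $t=\|x\|$ times a degree-$m$ harmonic), that the count of these polynomials over $0\le m\le n$, $1\le\ell\le a_m^d$ equals $\dim\CH_n^{d+1}=\dim\CV_n(\VV_0^{d+1},\sw_{\k,\g})$, and that mutual orthogonality across all degrees then forces each $\sJ_{m,\ell}^n$ to be orthogonal to every polynomial of degree $<n$; hence each lies in $\CV_n$ and they form an orthogonal basis.

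Finally, for the norm I set $m=m'$, $n=n'$ and collect constants. The change of variables $u=1-2t$ contributes a factor $2^{-(2m+2\a+\g+1)}$, which is cancelled exactly by the $2^{A+\g+1}$ hidden in the Jacobi normalization $c'_{A,\g}=2^{-A-\g-1}c_{A,\g}$, leaving the radial integral equal to $\tfrac1{c_{2m+2|\k|+d-2,\g}}\,h_{n-m}^{(2m+2|\k|+d-2,\g)}$. Multiplying by $\ssb_{\k,\g}$ and by the spherical factor $1/c_\k^h$ cancels $c_\k^h$, and since $\ssb_{\k,\g}$ is pinned down by $\la1,1\ra_{\k,\g}=1$ (the $m=n=0$ case), the whole constant collapses to a ratio of Jacobi normalizing constants $c_{\cdot,\g}$. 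The main obstacle is precisely this constant bookkeeping: one must check that the powers of $2$ produced by the change of variables cancel against the Jacobi normalization, and then reduce the quotient of Gamma functions coming from the two $c_{\cdot,\g}$'s to the stated Pochhammer form $\frac{(2|\k|+d-1)_{2m}}{(2|\k|+\g+d)_{2m}}$. This is where the index $2m$ (rather than $m$) in the Pochhammer symbols, and the correct base index of $\ssb_{\k,\g}$, must be tracked with care, since an off-by-one there is the most likely source of error.
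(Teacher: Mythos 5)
Your proposal is correct and takes essentially the same route as the paper: the paper's proof of this proposition is simply a reference to the proof of Proposition~\ref{prop:OPV0}, which is exactly the factorization you carry out --- insert spherical-polar coordinates via \eqref{eq:intV0}, use homogeneity of $Y_\ell^m$ and $h_\k$ to split the inner product into a spherical $h$-harmonic integral times a radial integral, and invoke Jacobi orthogonality (in place of Laguerre) for the radial factor. Your extra bookkeeping --- the cancellation of the powers of $2$ against $c'_{A,\g}=2^{-A-\g-1}c_{A,\g}$, pinning $\ssb_{\k,\g}$ by $\la 1,1\ra_{\k,\g}=1$, and the dimension-count argument that upgrades orthogonality to a basis --- is sound and merely fills in details the paper leaves implicit.
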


The proof follows exactly as that of Proposition \ref{prop:OPV0} using the Jacobi polynomials. 
We denote the reproducing kernel of $\CV_n\left(\VV_0^{d+1}, \sw_{\k,\g}\right)$ by
$\sP_n^\sJ(\sw_{\k,\g}; \cdot,\cdot)$. Then
$$
  \sP_n^\sJ\big(\sw_{\k,\g}; (x,t), (y,s)\big) = \sum_{m=0}^n \sum_{\ell =1}^{a_m^d}
     \frac{\sJ _{m,\ell}^{n}(x,t) \sJ_{m,\ell}^{n}(y,s)}{\sh^\sJ_{m,n}}.
$$

\begin{prop}
For $d \ge 2$, 
\begin{equation} \label{eq:JtoL}
   \lim_{\g \to \infty} \g^m  \sJ_{m,\ell}^{n}\left( \f{x}{\g},\f{t}{\g} \right) = \sL_{m,\ell}^{n}(x,t) \quad \hbox{and}\quad
    \lim_{\g \to \infty} \g^{2m} \sh^\sJ_{m,n} = \sh_{m,n}. 
\end{equation}
Furthermore, for the reproducing kernels, 
\begin{equation} \label{eq:KernelJtoL}
  \lim_{\g\to \infty} \sP_n^\sJ\left(\sw_{\k,\g}; \left( \f{x}{\g},\f{t}{\g} \right),\left( \f{y}{\g},\f{s}{\g} \right)\right) = 
      \sP_n(\sw_{\k}; (x,t),(y,s)). 
\end{equation}
\end{prop}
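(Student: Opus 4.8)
The plan is to establish the three limits in \eqref{eq:JtoL} and \eqref{eq:KernelJtoL} by reducing everything to known limit relations between Jacobi and Laguerre polynomials, exploiting the product structure (spherical $h$-harmonic) $\times$ (one-variable orthogonal polynomial) that both bases share. Since the spherical factor $Y_\ell^m(x)$ is a homogeneous polynomial of degree $m$, we have $Y_\ell^m(x/\g) = \g^{-m} Y_\ell^m(x)$, so the factor $\g^m$ in the first limit is tailored precisely to cancel this scaling and leave $Y_\ell^m(x)$ untouched. The entire content of the first limit therefore collapses onto the one-variable statement
$$
   \lim_{\g\to\infty} P_{n-m}^{(2m+2|\k|+d-2,\,\g)}\!\left(1-\tfrac{2t}{\g}\right) = L_{n-m}^{2m+2|\k|+d-2}(t),
$$
which is the classical limit $\lim_{\g\to\infty} P_N^{(\a,\g)}(1-2x/\g) = L_N^\a(x)$ (Szeg\H{o}). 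I would record this as the key input and verify that the argument $1-2(t/\g)=1-2t/\g$ matches the form required; note the parameter $\a=2m+2|\k|+d-2$ stays fixed as $\g\to\infty$, which is exactly the regime where the limit holds.

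For the norm limit I would use the explicit formula for $\sh^\sJ_{m,n}$ from Proposition~\ref{prop:OPV0Jacobi} and the known asymptotics of the Jacobi norm $h_{N}^{(\a,\g)}$ as $\g\to\infty$. Writing out $h_{n-m}^{(2m+2|\k|+d-2,\g)}$ via its Pochhammer-symbol expression and using $(\b+1)_N \sim \g^{N}$ and $(\a+\b+2)_N\sim\g^{N}$ type estimates, together with the prefactor $(2|\k|+d-1)_{2m}/(2|\k|+\g+d)_{2m}\sim (2|\k|+d-1)_{2m}\g^{-2m}$, one finds that $\g^{2m}\sh^\sJ_{m,n}$ converges; I would check that the resulting constant equals $\sh_{m,n}=(2|\k|+d-1)_{n+m}/(n-m)!$ from Proposition~\ref{prop:OPV0}. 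This is a bookkeeping computation with Gamma functions and should present no conceptual difficulty, though tracking the powers of $\g$ carefully is essential.

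The kernel limit \eqref{eq:KernelJtoL} then follows by combining the two previous limits termwise in the finite double sum defining $\sP_n^\sJ$. Substituting the scaled arguments,
$$
  \sP_n^\sJ\!\Big(\sw_{\k,\g};\big(\tfrac{x}{\g},\tfrac{t}{\g}\big),\big(\tfrac{y}{\g},\tfrac{s}{\g}\big)\Big)
   = \sum_{m=0}^n\sum_{\ell=1}^{a_m^d}
   \frac{\big[\g^m\sJ_{m,\ell}^{n}(\tfrac{x}{\g},\tfrac{t}{\g})\big]\big[\g^m\sJ_{m,\ell}^{n}(\tfrac{y}{\g},\tfrac{s}{\g})\big]}{\g^{2m}\sh^\sJ_{m,n}},
$$
where I have inserted matching powers $\g^m\cdot\g^m=\g^{2m}$ into numerator and denominator. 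Each numerator factor tends to $\sL_{m,\ell}^{n}$ by the first limit in \eqref{eq:JtoL} and each denominator tends to $\sh_{m,n}$ by the second, so each summand converges to $\sL_{m,\ell}^n(x,t)\sL_{m,\ell}^n(y,s)/\sh_{m,n}$; summing the finitely many terms yields $\sP_n(\sw_{\k};(x,t),(y,s))$. The main obstacle I anticipate is purely in the second limit—pinning down the precise constant in the Jacobi-norm asymptotics so that it matches $\sh_{m,n}$ on the nose—since a dropped factor there would leave a spurious constant in the kernel; the polynomial limit and the termwise passage for the finite sum are both routine once the classical Jacobi-to-Laguerre limit is invoked.
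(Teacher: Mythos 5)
Your proposal is correct and follows essentially the same route as the paper's own (much terser) proof: the classical Szeg\H{o} limit $\lim_{\g\to\infty}P_N^{(\a,\g)}(1-2\g^{-1}t)=L_N^\a(t)$ combined with the homogeneity $Y_\ell^m(\g^{-1}x)=\g^{-m}Y_\ell^m(x)$, the explicit Pochhammer formula for $h_N^{(\a,\g)}$ for the norm limit, and a termwise passage to the limit in the finite sum expressing both kernels in their orthogonal bases. Your constant check, $(2|\k|+d-1)_{2m}(2m+2|\k|+d-1)_{n-m}=(2|\k|+d-1)_{n+m}$, is exactly the bookkeeping the paper leaves implicit, and it does come out on the nose.
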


\begin{proof} 
Since the Laguerre polynomials are limits of the Jacobi polynomials \cite[(5.3.4)]{Sz},
$$
  \lim_{\b \to \infty}  P_n^{(\a,\b)}(1 - 2 \b^{-1} t)= L_n^\a(t),
$$
the limits in \eqref{eq:JtoL} follow accordingly using $Y_\ell^m( \g^{-1} x) = \g^{-m} Y_\ell^m (x)$ and 
the explicit formula of $h_n^{(\a,\b)}$. The limit \eqref{eq:KernelJtoL} follows as a consequence using 
the expression of both kernels written in terms of their corresponding orthogonal bases. 
\end{proof}

The reproducing kernel of the Jacobi polynomials on the conic surface satisfies a closed form formula,
discovered recently in \cite{X20a}, given via $Z_n^\l$ in \eqref{eq:Zn}.

\begin{thm}  \label{thm:sfPbCone}
Let $d \ge 2$, $\k_i \ge 0$ and $\g \ge -\f12$. Let $\a_\k = |\k| + \frac{d-2}{2}$. Then, for $(x,t), (y,s) \in \VV_0^{d+1}$,
\begin{align} \label{eq:sfPbCone}
 \sP_n^\sJ \big(\sw_{\k,\g}; (x,t), (y,s)\big) = \, & 
   c_{\k,\g} \int_{[-1,1]^{d+2}} Z_{2n}^{ 2 \a_\k +\g+1} \big( \zeta (x,t,y,s; u,v)\big) \\
    &\times (1-v_1^2)^{\a_\k -1}(1-v_2^2)^{\g-\f12} \d v  \Phi_\k(u) \d u, \notag
\end{align} 
where $c_{\k,\g} = c_\g c_{\a-\f12} c_{\k-\f{\mathbf{1}}{2}}$ and $\zeta (x,t,y,s; v) 
= v_1 \rho(x,t,y,s;u) + v_2 \sqrt{1-t}\sqrt{1-s}$ with 
\begin{equation} \label{eq:rho}
        \rho(x,t,y,s; u) :=  \sqrt{\tfrac{1}{2} (t s + x_1y_1 u_1+ \cdots + x_d y _d u_d)};
\end{equation}
moreover, the identity holds under the limit \eqref{eq:limitInt} if either $\g = -\f12$, or $d = 2$, or $\k_i =0$ for
one or more $i$. 
\end{thm}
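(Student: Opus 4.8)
The plan is to strip off the spherical $h$-harmonic part of the kernel by the addition formula \eqref{eq:reprod-h}, reducing \eqref{eq:sfPbCone} to a single scalar product formula for Jacobi polynomials in the radial variables $t,s$ and one angular variable, then to prove that scalar formula, and finally to reach the degenerate parameter ranges through the limit \eqref{eq:limitInt}. First I would separate variables: writing $x=t\xi$ and $y=s\eta$ with $\xi,\eta\in\sph$, the homogeneity $Y_\ell^m(x)=t^mY_\ell^m(\xi)$ turns the sum over $\ell$ in the basis expansion of $\sP_n^\sJ$ into the spherical reproducing kernel, so that by \eqref{eq:reprod-h},
\[
\sum_{\ell=1}^{a_m^d}Y_\ell^m(x)\,Y_\ell^m(y)=(ts)^m\,V_\k\!\left[Z_m^{\a_\k}(\la\xi,\cdot\ra)\right](\eta).
\]
Since the Jacobi factors are scalars and $V_\k$ acts in the variable $\eta$, this gives
\[
\sP_n^\sJ\big(\sw_{\k,\g};(x,t),(y,s)\big)=V_\k\!\left[\Sigma_n\big(t,s;\la\xi,\cdot\ra\big)\right](\eta),
\]
where $\Sigma_n(t,s;c)$ is the left-hand side of the scalar identity below. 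Applying the integral representation \eqref{eq:intertwin} of $V_\k$ replaces the placeholder $c$ by the twisted inner product $\xi_1\eta_1u_1+\cdots+\xi_d\eta_du_d$, produces exactly the outer integration $\int_{[-1,1]^d}(\cdots)\Phi_\k(u)\,\d u$, and, since $\tfrac12 ts(1+\sum_i\xi_i\eta_iu_i)=\tfrac12(ts+\sum_i x_iy_iu_i)$, converts the quantity $\rho_c$ below into $\rho(x,t,y,s;u)$ of \eqref{eq:rho}; the constants combine as $c_{\k-\frac{\mathbf1}2}\,c_\g\,c_{\a_\k-\f12}=c_{\k,\g}$. Thus \eqref{eq:sfPbCone} follows once the scalar identity is proved.

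The crux is the scalar product formula: for every $c\in[-1,1]$,
\begin{align*}
\Sigma_n(t,s;c) &:= \sum_{m=0}^n \f{(ts)^m}{\sh^\sJ_{m,n}} P_{n-m}^{(2m+2\a_\k,\g)}(1-2t)\,P_{n-m}^{(2m+2\a_\k,\g)}(1-2s)\,Z_m^{\a_\k}(c) \\
&= c_\g\,c_{\a_\k-\f12}\int_{-1}^1\!\!\int_{-1}^1 Z_{2n}^{2\a_\k+\g+1}\big(v_1\rho_c+v_2\tau\big)(1-v_1^2)^{\a_\k-1}(1-v_2^2)^{\g-\f12}\,\d v_1\,\d v_2,
\end{align*}
with $\rho_c=\sqrt{\tfrac12 ts(1+c)}$ and $\tau=\sqrt{(1-t)(1-s)}$. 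I would prove it by comparing, for fixed $t,s$, the Gegenbauer expansions of both sides in $c$. The coefficients of $Z_m^{\a_\k}(c)$ on the left are explicit; on the right, expanding the polynomial $Z_{2n}^{2\a_\k+\g+1}$ reduces the two integrations to elementary beta integrals $\int_{-1}^1 v^{2j}(1-v^2)^\sigma\,\d v$, the odd powers of $v_1,v_2$ dropping out by the symmetry $v_i\mapsto-v_i$. Because $\rho_c^2$ and $\tau^2$ are each linear in $c$ (and in $t,s$), the surviving even-power terms assemble into a polynomial of degree $n$ in $c$; this parity selection is exactly what halves the degree and explains why a Gegenbauer polynomial of degree $2n$ encodes data of degree $n$. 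Matching the coefficient of $Z_m^{\a_\k}(c)$ then reduces to a classical quadratic-transformation and connection identity for Jacobi polynomials, which identifies the resulting single Jacobi evaluation with the product $P_{n-m}^{(2m+2\a_\k,\g)}(1-2t)\,P_{n-m}^{(2m+2\a_\k,\g)}(1-2s)$ and the normalization $\sh^\sJ_{m,n}$. An equivalent route multiplies by $r^{2n}$ and sums over $n$: the right-hand side is then summed by the even part of the Gegenbauer Poisson kernel $\sum_N Z_N^\mu(w)r^N=(1-r^2)(1-2rw+r^2)^{-\mu-1}$, and the $v$-integrals close up by the same beta-integral device that passes from \eqref{eq:Mehler} to \eqref{eq:Mehler2}.

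Finally I would handle the degenerate parameters. Carrying out the above for $\g>-\f12$, $d\ge3$, and all $\k_i>0$ gives \eqref{eq:sfPbCone} with every exponent integrable; the remaining cases $\g=-\f12$, $d=2$, or some $\k_i=0$ make one of $(1-v_2^2)^{\g-\f12}$, $(1-v_1^2)^{\a_\k-1}$, or $(1-u_i^2)^{\k_i-1}$ in $\Phi_\k$ fail to be integrable, and there I would freeze the other parameters and let the offending exponent decrease to its boundary value, using \eqref{eq:limitInt} to replace the divergent integral by the endpoint average; continuity in the parameters on the convergent side then propagates the identity to the limit. The main obstacle is the degree-doubling double-integral product formula above: the difficulty is to produce the precise combined argument $v_1\rho_c+v_2\tau$ with its two weights and, above all, to match a bilinear Jacobi product against a single Gegenbauer polynomial, which is the point where genuine Jacobi connection and quadratic-transformation identities are needed rather than a routine generating-function manipulation.
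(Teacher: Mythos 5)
The first thing to note is that the paper does not prove this statement at all: immediately after the theorem it remarks that the formula ``is stated in \cite[Theorem 10.2]{X20a}'' (with the correction that $|\k|$ there should read $2|\k|$), so Theorem \ref{thm:sfPbCone} is imported from the earlier paper rather than proved internally. Your attempt is therefore competing with the proof in \cite{X20a}, not with anything in this text. That said, your opening reduction is correct and is the natural factorization: writing $x=t\xi$, $y=s\eta$, using homogeneity of $Y_\ell^m$, the addition formula \eqref{eq:reprod-h}, and then the integral representation \eqref{eq:intertwin} of $V_\k$, the theorem is indeed equivalent to the scalar identity you call $\Sigma_n(t,s;c)$, with the placeholder $c$ replaced by $\xi_1\eta_1u_1+\cdots+\xi_d\eta_du_d$ and the constants combining exactly as you state.

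The genuine gap is at what you yourself call the crux. The degree-doubling identity --- a bilinear sum of Jacobi polynomials $P_{n-m}^{(2m+2\a_\k,\g)}(1-2t)\,P_{n-m}^{(2m+2\a_\k,\g)}(1-2s)$ with $m$-dependent parameter shifts, weighted by $(ts)^m Z_m^{\a_\k}(c)$, equal to a two-fold integral of the single Gegenbauer polynomial $Z_{2n}^{2\a_\k+\g+1}\big(v_1\rho_c+v_2\tau\big)$ --- is not a ``classical quadratic-transformation and connection identity.'' It is precisely the new content of \cite{X20a}; nothing in Szeg\H{o}, in Bailey's bilinear Jacobi kernels, or in standard connection-coefficient formulas delivers it by routine coefficient matching. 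Your parity argument correctly shows that both sides are polynomials of degree $n$ in $c$ and that the right-hand coefficients are explicit beta-integral sums, but the claim that these sums collapse into a product of two Jacobi polynomials, separately in $t$ and in $s$, \emph{is} the theorem, and you supply no mechanism producing that separation of variables. The generating-function variant fares no better: summing the left-hand side over $n$ requires Bailey's Poisson kernel for Jacobi polynomials with parameters shifting in $m$, and matching that against the double integral of the even part of the Gegenbauer Poisson kernel is an identity of the same depth, not the beta-integral device that passes from \eqref{eq:Mehler} to \eqref{eq:Mehler2}. Your handling of the degenerate cases $\g=-\f12$, $d=2$, or $\k_i=0$ via \eqref{eq:limitInt} is standard and fine, but as it stands the proposal proves only that Theorem \ref{thm:sfPbCone} is equivalent to the scalar identity, not the identity itself.
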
 

The formula \eqref{eq:sfPbCone} is stated in \cite[Theorem 10.2]{X20a} where, however, $|\k|$ should be 
replaced by $2|\k|$. There is also a more general formula when the weight function is $h_\k^2(x)t^\b(1-t)^\g$, 
which however is more involved for our purpose. 

\subsection{Poisson kernel and reproducing kernels}
Our main results in this subsection are the closed form formulas for the Poisson kernels and the reproducing
kernels of orthogonal polynomials on the conic surface. We start with the Poisson kernels 
$\sP(\sw_{\k}; r, \cdot, \cdot)$ defined by 
\begin{equation} \label{eq:PoissonLV0}
    \sQ_r(\sw_{\k}; (x,t), (y,s)) =  \sum_{n=0}^\infty \sP_n (\sw_{\k}; x,t,y,s) r^n, \qquad 0 \le r <1. 
\end{equation}

\begin{thm} \label{thm:PoissonV0L}
Let $d \ge 2$, $\k_i \ge 0$. If $\a_\k = |\k| + \frac{d-2}{2} >0$, then 
\begin{align*}
 \sQ_r(\sw_{\k}; (x,t), (y,s))  = \, & \frac{\e^{ -\frac{(t+s)r}{1-r}}}{(1-r)^{2\a_\k+1}}  
      c_{\a_\k-\f12} c_{\k-\f{\mathbf{1}}{2}} \int_{[-1,1]^{d+1}} \exp\bigg\{\frac{ 2 \sqrt{r}  v }{1-r} \rho(x,t,y,s;u)\bigg\} \\
        & \times  \Phi_\k(u)  \d u (1-v^2)^{\a_\k-1} \d v,
\end{align*}
where $\Phi_\k$ is in \eqref{eq:Phi}; moreover, if $\a_\k = 0$ or, equivalently, $\k =0$ and $d =2$, then
$$
 \sQ_r (\sw_{0}; (x,t), (y,s)) = 
  \frac{\e^{ -\frac{(t+s)r}{1-r}}}{1-r} \cosh \bigg(\frac{2\sqrt{r}}{1-r}\rho(x,t,y,s)\bigg), 
$$
where $\rho(x,t,y,s) = \rho(x,t,y,s;1) =  \sqrt{\tfrac{1}{2} (t s + \la x, y\ra)}$. 
\end{thm}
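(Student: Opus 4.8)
The plan is to evaluate the generating function \eqref{eq:PoissonLV0} directly from the explicit basis of Proposition~\ref{prop:OPV0}, combining the addition formula \eqref{eq:reprod-h} for the $h$-harmonics with the Mehler formula \eqref{eq:Mehler2} for the Laguerre polynomials. Writing the points as $(x,t)=(t\xi,t)$ and $(y,s)=(s\eta,s)$ with $\xi,\eta\in\sph$, homogeneity $Y_\ell^m(t\xi)=t^mY_\ell^m(\xi)$ gives $\sum_\ell Y_\ell^m(x)Y_\ell^m(y)=(ts)^m\sP_m(h_\k^2;\xi,\eta)=(ts)^mV_\k\big[Z_m^{\a_\k}\la\xi,\cdot\ra\big](\eta)$ by \eqref{eq:reprod-h}. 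Substituting this into the kernel $\sP_n(\sw_\k;\cdot)$, multiplying by $r^n$, and reindexing $n=m+k$, I would use $2|\k|+d-1=2\a_\k+1$ to split $\sh_{m,m+k}^{-1}=\big[(2\a_\k+1)_{2m}\,h_k^{2m+2\a_\k}\big]^{-1}$, so that (all rearrangements being justified by absolute convergence for $0\le r<1$)
\[
 \sQ_r\big(\sw_\k;(x,t),(y,s)\big)=V_\k\Big[\,\sum_{m=0}^\infty \f{(tsr)^m}{(2\a_\k+1)_{2m}}\,Z_m^{\a_\k}(\la\xi,\cdot\ra)\,M_m\,\Big](\eta),\qquad
 M_m=\sum_{k\ge0}\f{L_k^{2m+2\a_\k}(t)L_k^{2m+2\a_\k}(s)}{h_k^{2m+2\a_\k}}\,r^k .
\]

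Next I would evaluate $M_m$ by \eqref{eq:Mehler2} with parameter $\nu=2m+2\a_\k\ge0$. A short Beta-integral computation identifies the Mehler kernel as a confluent hypergeometric factor: with $A:=\sqrt{tsr}/(1-r)$ one has $c_\nu\int_{-1}^1\e^{2Aw}(1-w^2)^{\nu-\f12}\d w={}_0F_1(;\nu+1;A^2)$, whence $M_m=\e^{-(t+s)r/(1-r)}(1-r)^{-2m-2\a_\k-1}\,{}_0F_1(;2m+2\a_\k+1;A^2)$. Pulling out the common factor $\e^{-(t+s)r/(1-r)}(1-r)^{-2\a_\k-1}$ and using $(tsr)/(1-r)^2=A^2$, the whole problem collapses to the single identity
\[
 \sum_{m=0}^\infty\f{A^{2m}}{(2\a_\k+1)_{2m}}Z_m^{\a_\k}(c)\,{}_0F_1\big(;2m+2\a_\k+1;A^2\big)={}_0F_1\Big(;\a_\k+\tfrac12;A^2\tfrac{1+c}{2}\Big),
\]
to be applied pointwise in the $V_\k$-variable with $c=\sum_i\xi_i\eta_iu_i$.

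This identity is the heart of the matter and the step I expect to be the main obstacle. Expanding both ${}_0F_1$'s (legitimate since each side is entire in $A$) and collecting the coefficient of $A^{2p}$, and using $(2\a_\k+1)_{2m}(2m+2\a_\k+1)_{p-m}=(2\a_\k+1)_{m+p}$, it is equivalent to the finite identity
\[
 \sum_{m=0}^p\f{Z_m^{\a_\k}(c)}{(2\a_\k+1)_{m+p}\,(p-m)!}=\f{1}{p!\,(\a_\k+\f12)_p}\Big(\f{1+c}{2}\Big)^p,\qquad p\ge0,
\]
which is precisely the expansion of $\big(\tfrac{1+c}{2}\big)^p$ in the Gegenbauer basis $\{C_m^{\a_\k}\}_{m=0}^p$. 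I would prove it by the standard connection-coefficient machinery --- either by computing $\int_{-1}^1\big(\tfrac{1+c}{2}\big)^pC_m^{\a_\k}(c)(1-c^2)^{\a_\k-\f12}\d c$ and dividing by the squared norm, or by induction on $p$ via the three-term recurrence; the cases $p=0,1$ are immediate checks.

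Finally I would reverse the reduction. Since $x_iy_i=ts\,\xi_i\eta_i$, the argument on the right is $A^2\tfrac{1+c}{2}=r\,\rho(x,t,y,s;u)^2/(1-r)^2$ by \eqref{eq:rho}, and the same Beta integral (now with $\nu=\a_\k-\f12$) rewrites
\[
 {}_0F_1\Big(;\a_\k+\tfrac12;\tfrac{r\rho^2}{(1-r)^2}\Big)=c_{\a_\k-\f12}\int_{-1}^1\exp\Big\{\tfrac{2\sqrt r}{1-r}\rho(x,t,y,s;u)\,v\Big\}(1-v^2)^{\a_\k-1}\d v .
\]
Re-inserting the definition \eqref{eq:intertwin} of $V_\k$, i.e.\ the outer $u$-integration against $c_{\k-\f{\mathbf{1}}{2}}\Phi_\k$, then produces the stated formula for $\a_\k>0$. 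For $\a_\k=0$, that is $d=2$ and $\k=0$, the operator $V_\k$ is trivial (so $c=\la\xi,\eta\ra$ and $\rho=\rho(x,t,y,s)$) and the weight $(1-v^2)^{\a_\k-1}$ is no longer integrable; here I would instead use ${}_0F_1(;\tfrac12;z^2)=\cosh(2z)$ directly, which turns the prefactor times ${}_0F_1(;\tfrac12;r\rho^2/(1-r)^2)$ into $\tfrac{1}{1-r}\e^{-(t+s)r/(1-r)}\cosh\!\big(\tfrac{2\sqrt r}{1-r}\rho(x,t,y,s)\big)$, the second formula (consistently with the endpoint reading \eqref{eq:limitInt}).
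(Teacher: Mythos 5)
Your proof is correct, but it follows a genuinely different route from the paper's. The paper derives the Poisson kernel as a scaling limit of the compact case: starting from the closed formula \eqref{eq:sfPbCone} for the Jacobi-type kernel, it reinterprets that formula as a Fourier--Gegenbauer coefficient, so that a symmetrized kernel $\wt G_\g$ becomes the generating function of the kernels $\sP_n^\sJ$, and then passes to the limit $\g\to\infty$ under $(x,t)\mapsto(x/\g,t/\g)$, using the Jacobi-to-Laguerre limits \eqref{eq:JtoL}--\eqref{eq:KernelJtoL} and an explicit computation of $\lim_{\g\to\infty}F_\g$. You instead sum the series \eqref{eq:PoissonLV0} directly from the basis of Proposition \ref{prop:OPV0}, collapsing the spherical sum by the addition formula \eqref{eq:reprod-h} and the Laguerre sum by Hardy--Hille, and reduce everything to a single Gegenbauer connection-coefficient identity. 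Your key computations check out: $\sh_{m,m+k}=(2\a_\k+1)_{2m}\,h_k^{2m+2\a_\k}$ is correct; your $M_m$ agrees with the standard Hardy--Hille formula (note it is \eqref{eq:Mehler2} that is stated correctly in the paper, while \eqref{eq:Mehler} itself carries a typo --- its $(1-r)^{\a+1}$ should read $1-r$ --- so working from \eqref{eq:Mehler2} is the right choice); and the finite identity $\sum_{m=0}^p Z_m^{\a_\k}(c)/[(2\a_\k+1)_{m+p}(p-m)!]=\big(\f{1+c}2\big)^p/[p!\,(\a_\k+\f12)_p]$ is indeed the Gegenbauer expansion of $\big(\f{1+c}2\big)^p$: the Rodrigues/Beta-integral computation you propose yields precisely these coefficients (I verified them, as well as the cases $p=0,1,2$), and the identity persists at $\a_\k=0$ with $Z_m^0$ of \eqref{eq:Zn}, which is what your $d=2$, $\k=0$ branch needs together with ${}_0F_1(;\f12;z^2)=\cosh(2z)$. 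As for what each approach buys: the paper's proof recycles the nontrivial compact-cone formula of \cite{X20a} and needs no new special-function identity, but it rests on termwise limit interchanges ($\g\to\infty$ inside an infinite Gegenbauer expansion) that are only sketched; your proof is self-contained and elementary, independent of \cite{X20a} and of any limiting process, with the cost concentrated in the classical connection-coefficient identity that you correctly isolate as the heart of the matter.
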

 
\begin{proof}
We start with \eqref{eq:sfPbCone} and change variable $v_2 \mapsto r =  \zeta (x,t,y,s; u, v_1,v_2)$ in the
integral against $v_2$, so that 
$$
   v_2 = \frac{r - v_1 \rho(x,t,y,s; u)}{\sqrt{1-t}\sqrt{1-s}} \quad \hbox{and} \quad \d v_2 = \frac{1}{\sqrt{1-t}\sqrt{1-s}} \d r.
$$
This leads to a reformulation of $\sP_n^\sJ\big(\sw_{\k,\g}; \cot,\cdot\big)$ as 
\begin{align} \label{eq:PnJ-lem}
\sP_n^\sJ \big(\sw_{\k,\g}; (x,t), (y,s)\big) = \,& c_\g \int_{-1}^1 Z_{2n}^{\l}(r) G_\g(x,t,y,s; u; r) (1-r^2)^{\l -\f12} \d r,
\end{align}
where $\l = \g+ 2\a_\k + 1 = \g+2|\k|+d-1$ and 
$$
     G_\g(x,t,y,s; u; r) = c_{\a_\k-\f12} c_{\k-\f{\mathbf{1}}{2}} 
           \int_{[-1,1]^{d+1}} F_\g(x,t,y,s,r;u,v) (1-v^2)^{\a_\k-1} \d v  \Phi_\k(u)\d u
$$
in terms of the the function $F_\g$ defined by
$$
  F_\g(x,t,y,s,r;u,v) =\frac{1}{\sqrt{1-t}\sqrt{1-s} (1-r^2)^{\l-\f12}} \left(1- \frac{(r - v \rho(x,t,y,s;u))^2}{(1-t)(1-s)} \right)^{\g-\f12} 
$$
if $|r- v  \rho(x,t,y,s;u)| \le \sqrt{1-t}\sqrt{1-s}$ and $F_\g(x,t,y,s,r;u, v) = 0$ otherwise. Let us define 
$$
  \wt  G_\g(x,t,y,s;u,r)  = \frac12 \left[ G_\g(x,t,y,s;u, r) + G_\g(x,t,y,s; u, - r) \right],
$$
which is even in $r$. By symmetry of the integral, it follows from \eqref{eq:PnJ-lem} that 
$$
 c_\g \int_{-1}^1 Z_{m}^{\l}(r) \wt G_\g(x,t,y,s;u, r) (1-r^2)^{\l -\f12} \d r = \begin{cases} 
 \sP_n^\sJ \big(w_{-1,\g}; (x,t), (y,s)\big), &  m =2n, \\
 0, & m = 2n+1.
\end{cases}
$$
The integral in the left-hand side gives the Fourier coefficient, up to the normalization constant, of the function 
$r\mapsto\wt G_\g(x,t,y,s; r)$ of the Fourier-Gegenbauer series in $L^2([-1,1], (1-\{\cdot\}^2)^{\l-\f12})$. 
Consequently, using 
$h_n^\l = \frac{n+\l}{\l}C_n^\l(1)$, it follows that 
$$
 \wt G_\g(x,t,y,s; r) = \frac{c_\g}{c_\l} \sum_{n=0}^\infty  \sP_n^\sJ \big(\sw_{\k,\g}; (x,t), (y,s)\big) \frac{C_{2n}^\l(r)}{C_{2n}^\l(1)}.
$$
Since $\l = \g+2 \a_\k +1$, $c_\g/c_\l \to 1$ as $\g \to \infty$ and, by the ${}_2F_1$ expression of $C_n^\l$, it is easy 
to see that $C_{2n}^\l(r)/ C_{2n}^\l(1) \mapsto r^{2n}$ as $\g \to \infty$, it follows from \eqref{eq:JtoL} that 
\begin{equation} \label{eq:limitGg}
   \lim_{\g \to \infty}  \wt G_\g(x,t,y,s; r) =    \sQ_{r^2} \big(\sw_{\k}; (x,t), (y,s)\big).
\end{equation}
Since $\rho(x/\g,t\g,y\g,s\g; u) = \frac1{\g} \rho(x,t,y,s;u)$, we see that $F_\g$ satisfies 
\begin{align*}
 F_\g \left( \f{x}{\g},  \f{t}{\g},  \f{y}{\g},  \f{s}{\g}, r; u,v \right) =&  \frac{1}{(1-r^2)^{2\a_\k+1}(1-\frac{t}{\g})^\g
      (1-\frac{s}{\g})^\g} \\
     \times & \left(1- \frac{t+s-2 v \rho(x,t,y,s;u)}{\g(1-r^2)} - \frac{t s+ v^2 \rho(x,t,y,s;u)^2}{\g^2(1-r^2)} 
    \right)^{\g-\f12}
\end{align*}
if $r-  \frac{v}{\g} \rho(x,t,y,s;u) \le \sqrt{1-\f t \g}\sqrt{1-\f s \g}$. Taking the limit $\g \to \infty$, we obtain
\begin{align*}
  \lim_{\g \to \infty} F_\g \left( \f{x}{\g},  \f{t}{\g},  \f{y}{\g},  \f{s}{\g}, r; u, v \right)
     = & \frac{1}{(1-r^2)^{2\a_\k+1}\e^{-t -s}} \exp\left\{ -\frac{t+s - 2 v r \rho(x,t,y,s;u)}{1-r^2}\right\} \\
     = & \frac{1}{(1-r^2)^{2\a_\k+1}}  \exp\left\{ -\frac{ (t+s)r^2 - 2 v r \rho(x,t,y,s;u)}{1-r^2}\right\}. 
\end{align*}
Hence, taking the limit of $G_\g$ accordingly, we conclude that 
\begin{align*}
  \lim_{\g \to \infty} G_\g\left( \f{x}{\g},  \f{t}{\g},  \f{y}{\g},  \f{s}{\g}, r; v \right)
      =  \, & \frac{1}{(1-r^2)^{2\a_\k+1}} \e^{ -\frac{(t+s)r^2}{1-r^2}} \int_{[-1,1]^{d+1}}
         \exp\left\{\frac{ 2  r \rho(x,t,y,s;u)}{1-r^2} v \right\} \\
          & \times c_{\a-\f12}c_{\a_\k-\f{\mathbf{1}}2} (1-v^2)^{\a-1} \d v   \Phi_\k(u) \d u.
\end{align*}
Changing variable $v\to -v$ shows that the right-hand side is an even function in $r$. Hence, the 
limit of $\wt G_\g$ is the sam as the limit of $G_\g$. Thus, comparing the above limit with 
\eqref{eq:limitGg}, we complete the proof. 
\end{proof}

The Poisson kernel can be regarded as the generating function of the reproducing kernels. We derive a 
closed formula of the latter from that of the former. 

\begin{thm}\label{thm:reprod-closed}
Let $d \ge 2$. If $\a_\k=|\k|+\f{d-2}{2} > 0$, then for $(x,t), (y,s) \in \VV_0^{d+1}$, 
\begin{align}\label{eq:PnLV0}
& \sP_n \big(\sw_{\k}; (x,t),(y,s)\big) = C_{\k} \int_{[1,1]^d} \int_0^\pi 
     L_n^{2 \a_\k} \big(t+s+2 \rho(x,t,y,s;u) \cos \t \big)  \\
       &\qquad \times e^{- \rho (x,t,y,s;u) \cos \t}  j_{\a_\k-1} \big(\rho (x,t,y,s;u) \sin \t\big) (\sin \t)^{2\a_\k-1}\d\t 
          \Phi_\k(u) \d u,  \notag
\end{align}
where $C_{\k} =  \frac{2^{\a_\k -1} \Gamma(\a_\k +\f12)}{\sqrt{\pi}} c_{\k-\f{\mathbf{1}}{2}}$ and $\rho(\cdot)$ 
is given in \eqref{eq:rho}. If $\a_\k =0$ or equivalently, $d =2$ and $|\k| =0$,  then
\begin{align}\label{eq:PnLV0_d=2}
  \sP_n \big(\sw_{\k}; (x,t),(y,s)\big)  = & \frac12 \left[ \e^{-\rho(x,t,y,s)} L_n^0\big(t+s+2\rho(x,t,y,s)\big) \right. \\
     &  \quad \left. +  \e^{\rho(x,t,y,s)}   L_n^0 \big(t+s- 2\rho(x,t,y,s)\big) \right]  \notag \\
   &  -  \f12 \rho(x,t,y,s)^2  \int_0^\pi L_n^{0} \big(t+s+2 \rho(x,t,y,s) \cos \t\big)  \notag \\
   &  \qquad  \times  e^{- \rho (x,t,y,s) \cos \t} j_1 \big(\rho (x,t,y,s) \sin \t\big) \sin \t \d\t. \notag 
\end{align}
In particular, for $d \ge 2$ and $\a_\k \ge 0$, 
\begin{equation}\label{eq:PnLV0_x=0}
 \sP_n \big(\sw_{\k}; (x,t),(0,0)\big)  = L_n^{2|\k|+d-2}(t), \qquad (x,t) \in \VV_0^{d+1}.
\end{equation}
\end{thm}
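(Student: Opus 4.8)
The plan is to use the generating-function relation \eqref{eq:PoissonLV0}, namely $\sQ_r(\sw_{\k};\cdot)=\sum_{n\ge0}\sP_n(\sw_{\k};\cdot)\,r^n$. Since the coefficients of a convergent power series are unique, it suffices to take the proposed right-hand side of \eqref{eq:PnLV0}, multiply it by $r^n$, sum over $n$, and check that the result coincides with the closed form for $\sQ_r$ supplied by Theorem~\ref{thm:PoissonV0L}. The only $n$-dependence in \eqref{eq:PnLV0} is the factor $L_n^{2\a_\k}\big(t+s+2\rho\cos\t\big)$, where $\rho=\rho(x,t,y,s;u)$ is as in \eqref{eq:rho}. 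After justifying the interchange of summation with the $(\t,u)$-integrations (routine uniform convergence on compact subsets of $0\le r<1$), I would invoke the Laguerre generating function \eqref{eq:generatingL} at index $2\a_\k$,
\[
\sum_{n=0}^\infty L_n^{2\a_\k}\big(t+s+2\rho\cos\t\big)\,r^n = \f{1}{(1-r)^{2\a_\k+1}}\exp\Big\{-\f{(t+s+2\rho\cos\t)r}{1-r}\Big\}.
\]
This produces exactly the prefactor $(1-r)^{-(2\a_\k+1)}\e^{-(t+s)r/(1-r)}$ of Theorem~\ref{thm:PoissonV0L}, and combining the two exponentials in $\cos\t$ collapses $-\tfrac{2\rho r}{1-r}\cos\t-\rho\cos\t$ into $-\rho P\cos\t$ with $P=\f{1+r}{1-r}$.

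Cancelling this common prefactor together with the outer $\Phi_\k(u)\,\d u$ integral, the claim \eqref{eq:PnLV0} reduces to the single-variable identity
\[
\f{2^{\a_\k-1}\Gamma(\a_\k+\f12)}{\sqrt\pi}\int_0^\pi \e^{-\rho P\cos\t}\,j_{\a_\k-1}(\rho\sin\t)(\sin\t)^{2\a_\k-1}\,\d\t = c_{\a_\k-\f12}\int_{-1}^1 \e^{\rho Q v}(1-v^2)^{\a_\k-1}\,\d v,
\]
where $Q=\f{2\sqrt r}{1-r}$, and the decisive algebraic fact is $P^2-Q^2=1$. This is precisely the identity one obtains by multiplying the classical product formula \eqref{eq:productL1} at index $\a=\a_\k-\f12$ by $r^n$, summing, evaluating $\sum_n L_n^{\a}r^n$ by \eqref{eq:generatingL}, and matching the outcome against the Mehler formula \eqref{eq:Mehler2} at the same index: the prefactors $(1-r)^{-(\a+1)}\e^{-(\,\cdot\,)r/(1-r)}$ produced on the two sides are identical and cancel, leaving the displayed equality once the geometric-mean variable $\sqrt{ts}$ in \eqref{eq:productL1} is set equal to $\rho$. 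The constant in \eqref{eq:PnLV0} is tailored so that $C_\k/c_{\k-\f{\mathbf{1}}2}=2^{\a_\k-1}\Gamma(\a_\k+\f12)/\sqrt\pi=2^{\a_\k-\f12}\Gamma(\a_\k+\f12)/\sqrt{2\pi}$ coincides with the constant in \eqref{eq:productL1}, which is exactly what forces the two sides to agree.

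The main obstacle is this core integral identity, i.e.\ reconciling the $\cos\t$-exponential carrying $P$ (coming from the generating function of the Laguerre factor at index $2\a_\k$) with the $v$-exponential carrying $Q$ (coming from the Poisson kernel); once $P^2-Q^2=1$ is observed, it is the known consistency of \eqref{eq:productL1} with \eqref{eq:Mehler2}. Note that \eqref{eq:productL1} requires $\a=\a_\k-\f12>-\f12$, i.e.\ $\a_\k>0$, which is exactly the hypothesis of the first case. For $\a_\k=0$ (that is, $d=2$ and $|\k|=0$) the index degenerates to $-\f12$, so I would instead use Boersma's product formula \eqref{eq:productL2} together with the degenerate Mehler kernel of \eqref{eq:Mehler2} taken under the limit \eqref{eq:limitInt}: summing \eqref{eq:PnLV0_d=2} against $r^n$ via \eqref{eq:generatingL} at index $0$, the two boundary terms assemble into $\tfrac{\e^{-(t+s)r/(1-r)}}{1-r}\cosh(\rho P)$, while the $j_1$-integral supplies the correction from $\cosh(\rho P)$ to $\cosh(\rho Q)$, reproducing the degenerate Poisson kernel of Theorem~\ref{thm:PoissonV0L}.

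Finally, \eqref{eq:PnLV0_x=0} is obtained most cleanly not from the closed forms but directly from the basis expansion of $\sP_n$: at $(y,s)=(0,0)$ every $h$-harmonic $Y_\ell^m$ with $m\ge1$ vanishes, so only the $m=0$ term of $\sum_{m}\sum_\ell \sL_{m,\ell}^n(x,t)\sL_{m,\ell}^n(0,0)/\sh_{m,n}$ survives, and since $\sh_{0,n}=L_n^{2\a_\k}(0)$ and the degree-zero harmonic is the constant $1$, it collapses to $L_n^{2\a_\k}(t)=L_n^{2|\k|+d-2}(t)$, valid for all $\a_\k\ge0$. As a consistency check one may also set $\rho=0$ in \eqref{eq:PnLV0}, using $j_{\a_\k-1}(0)=\big(2^{\a_\k-1}\Gamma(\a_\k)\big)^{-1}$, $\int_0^\pi(\sin\t)^{2\a_\k-1}\d\t=\sqrt\pi\,\Gamma(\a_\k)/\Gamma(\a_\k+\f12)$, and $\int_{[-1,1]^d}\Phi_\k(u)\,\d u=c_{\k-\f{\mathbf{1}}2}^{-1}$, which recovers the same value.
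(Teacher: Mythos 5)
Your proposal is correct and is essentially the paper's own proof run in reverse. The paper starts from the closed form of $\sQ_r$ in Theorem~\ref{thm:PoissonV0L}, recognizes its inner $v$-integral as the Mehler kernel \eqref{eq:Mehler2} evaluated at auxiliary points $t^*,s^*$ determined by $t^*+s^*=t+s$ and $\sqrt{t^*s^*}=\rho(x,t,y,s;u)$, converts the product $L_n^{\a_\k-\f12}(t^*)L_n^{\a_\k-\f12}(s^*)$ into the $\t$-integral by the product formula \eqref{eq:productL1}, raises the Laguerre index via \eqref{eq:generatingL}, and extracts the coefficient of $r^n$; you instead sum the candidate kernel and match it against $\sQ_r$, which reduces to exactly the same consistency of \eqref{eq:productL1} with \eqref{eq:Mehler2}, so the ingredients and the computation coincide (your basis-expansion treatment of \eqref{eq:PnLV0_x=0} is a minor, if anything cleaner, variant of the paper's evaluation at $\rho=0$).

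One concrete point you would hit when executing the $\a_\k=0$ step literally: with \eqref{eq:productL2} as printed (coefficient $-ts$ on the $j_1$-integral), the verification of \eqref{eq:PnLV0_d=2} fails by a factor of $2$, since the substitution gives $t^*s^*=\rho^2$ while \eqref{eq:PnLV0_d=2} carries $-\f12\rho^2$; the paper's own forward derivation has the identical mismatch. Your generating-function method is what resolves it: matching $\sum_n r^n$ times \eqref{eq:PnLV0_d=2} with $\sQ_r(\sw_0)$ requires the identity $\cosh(\rho Q)=\cosh(\rho P)-c\,\rho^2\int_0^\pi \e^{-\rho P\cos\t}\, j_1(\rho\sin\t)\sin\t\,\d\t$ with your $P=\f{1+r}{1-r}$ and $Q=\f{2\sqrt{r}}{1-r}$, and comparing the order-$\rho^2$ terms on both sides (using $j_1(0)=\f12$, $\int_0^\pi\sin\t\,\d\t=2$ and $P^2-Q^2=1$) forces $c=\f12$. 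Hence \eqref{eq:PnLV0_d=2} as stated is the correct formula, and the quoted Boersma formula \eqref{eq:productL2} is missing a factor $\f12$ on its integral term; once that is corrected, your argument for the $\a_\k=0$ case goes through verbatim.
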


\begin{proof}
We can compare the Poisson formula for $\sQ_r(\sw_{\k}; \cdot,\cdot)$ with \eqref{eq:Mehler2} 
of the Laguerre polynomials by setting 
\begin{align*}
  & t^* = \frac12 \left(t+s - \sqrt{t^2+s^2 - 2 (x_1 y_1 u_1 +\cdots +x_d y_d u_d)} \right), \\
  & s^* =  \frac12 \left(t+s +\sqrt{t^2+s^2 - 2 (x_1 y_1 u_1 +\cdots +x_d y_d u_d)} \right),
\end{align*}
which are well defined, since the expression under the square root is nonnegative for
$\|x\| = t$ and $\|y\| =s$ by the Cauchy inequality, and they satisfy 
\begin{equation}\label{t*+s*}
   t^* + s^* = t + s \quad\hbox{and} \quad \sqrt{t^* s^*} = \rho(x,t,y,s;u). 
\end{equation}
Comparing the formula in Theorem \ref{thm:PoissonV0L} with \eqref{eq:Mehler2}, we obtain with
$\a = \a_\k$, 
$$
 \sQ_r(\sw_{\k}; x,t,y,s) =  \frac{1}{(1-r)^{\a + \f12}} c_{\k-\f{\mathbf{1}}{2}} \int_{[-1,1]^d} 
     \sum_{n=0}^\infty \frac{L_n^{\a - \f 12}(t^*)L_n^{\a - \f 12}(s^*)} 
    {L_n^{\a - \f 12}(0)} r^{n} \Phi_\k(u) \d u. 
$$
If $\a > 0$, we apply the product formula \eqref{eq:productL1} on $L_n^{\a - \f 12}(t^*)L_n^{\a - \f 12}(s^*)$
which gives an integral representation that contains only $t^*+s^*$ and $t^* s^*$, so that we obtain, 
by \eqref{t*+s*},
\begin{align*}
 \frac{L_n^{\a - \f 12}(t^*)L_n^{\a - \f 12}(s^*)} 
    {L_n^{\a - \f 12}(0)} =\,& \frac{2^{\a-\f12} \Gamma(\a+\f12)}{\sqrt{2\pi}}
      \sum_{n=0}^\infty \int_0^\pi L_n^{\a-\f12} \big(t +s + 2 \rho(x,t,y,s;u) \cos \t \big) \\
       &\times e^{- \rho(x,t,y,s;u)  \cos \t}  j_{\a-1} \left(\rho(x,t,y,s;u)\sin \t\right) (\sin \t)^{2\a-1}\d\t. \, \notag
\end{align*}
From the generating function \eqref{eq:generatingL} it follows readily that  
$$
\frac{1}{(1-r)^{\b+1}} \sum_{n=0}^\infty L_n^\a (x) r^n =  \sum_{n=0}^\infty L_n^{\a+\b+1} (x) r^n.  
$$
Consequently, putting the last three displayed formula together, we deduce that  
\begin{align*}
  \sQ_r(\sw_\k; x,t,y,s) \, & = C_\k \int_{[-1,1]^d} \int_0^\pi L_n^{2\a} \big(t +s + 2 \rho(x,t,y,s;u) \cos \t \big) r^{n}  \\
       &\times e^{- \rho(x,t,y,s;u) \cos \t}  j_{\a-1} \left(\rho(x,t,y,s;u)\sin \t\right) (\sin \t)^{2\a-1}\d \t 
        \Phi_\k(u) \d u.
\end{align*}
By \eqref{eq:PoissonLV0}, the coefficient of $r^n$ is the reproducing kernel $\sP_n(\sw_{\k}; \cdot,\cdot)$, 
which gives the stated formula for $\a > 0$. The proof for $\a=0$ follows the same procedure but 
using \eqref{eq:productL2} instead. Finally, using $\rho(x,t,0,0) =0$ and $j_\a(0) = 1/(2^\a \Gamma(\a+1))$,
\eqref{eq:PnLV0_x=0} follows readily from \eqref{eq:PnLV0} and \eqref{eq:PnLV0_d=2}. 
\end{proof}

\section{Convolution structure on conic surface with Laguerre weight}

The closed form of the reproducing kernel suggests a pseudo convolution structure on the conic surface,
which is bounded in the $L^p$ space defined on $\VV_0^{d+1}$ as follows: $f\in L^p_{\su(\k)}(\VV_0^{d+1})$ 
if $\|f\|_{p,\k}$ is finite, where 
$$
  \|f \|_{p, \k} : = \left( \ssb_\k \int_{\VV_0^{d+1}} \left |f(x,t) \e^{-t/2} \right |^p \su_\k(x,t) \d \s(x,t) \right)^{\f1 p}, 
   \quad   \su_k(x,t) =  t^{-1} h_\k^2(x),
$$
for $1 \le p < \infty$, and 
$$
  \|f\|_{\infty,\k} : = \esssup \left \{|f(x,t)| \e^{- t/2}: (x,t) \in \VV_0^{d+1} \right\}. 
$$
For $p =2$, the norm $ \|\cdot\|_{2, \k}$ coincides with the usual norm of $L^2(\VV_0^{d+1}, \sw_\k)$. 

The pseudo convolution is defined via a generalized translation operator, which is defined and shown to be
bounded in the first subsection. The convolution structure is studied in the second subsection, which is used 
to analyze the Ces\`aro summability of the generalized Laguerre series on the conic surface in the third 
subsection.

\subsection{Translation operator} 
For $d \ge 2$, $\k_i \ge 0$, let $\a_\k = |\k|+ \f{d-2}2$ as before. For $g: \RR_+ \mapsto \RR$ we define a 
generalized translation operator $\sT_{(x,t)}$. 

\begin{defn}\label{def:sT-transl}
Let $g \in L^1_{u(2\a_\k)}(\RR_+)$ and $(x,y) \in \VV_0^{d+1}$. If $\a_k > 0$, define
\begin{align*}
 \sT_{(x,t)} g(y,s) = \, & C_\k \int_{[-1,1]^d}\int_{0}^\pi g(t+s+2 \rho(x,t,y,s;u) \cos \t \big)\\
       &\times e^{- \rho (x,t,y,s;u) \cos \t}  j_{\a_\k -1} \big(\rho (x,t,y,s;u) \sin \t\big) (\sin \t)^{2\a_\k -1}\d\t
        \Phi_\k(u) \d u,  \notag
\end{align*}
where $C_\k$ is given in \eqref{eq:PnLV0}; if $d =2$ and $\k = 0$, define
\begin{align*}
  \sT_{(x,t)} g(y,s)  = \, & \frac12 \left[ \e^{-\rho(x,t,y,s)} g\big(t+s+2\rho(x,t,y,s)\big) + 
     \e^{\rho(x,t,y,s)}  g \big(t+s- 2\rho(x,t,y,s)\big) \right]  \notag \\
   &  -  \f12 \rho(x,t,y,s)^2  \int_0^\pi g \big(t+s+2 \rho(x,t,y,s) \cos \t\big)  \notag \\
   &  \qquad\qquad\qquad\quad  \times  e^{- \rho (x,t,y,s) \cos \t} j_1 \big(\rho (x,t,y,s) \sin \t\big) \sin \t \d\t. \notag
\end{align*}
\end{defn}

The definition is motivated by the closed form formula of the reproducing kernel. Indeed, it 
follows readily that 
\begin{equation} \label{eq:TLn=Pn}
    \sP_n(\sw_{\k}; (x,t),\cdot ) =  \sT_{(x,t)} L_n^{2|\k|+d-2}, \qquad n = 0, 1, 2, \ldots. 
\end{equation}
The generalized translation operator will be used to define a pseudo convolution structure on the conic surface
in the next subsection. We first show that this operator is bounded. It turns out that we need to consider
two separated cases. 

\begin{prop} \label{prop:sTbd}
Let $d \ge 2$. Assume $\a_k = |\k|+\f{d-2}{2} \ge \f12$. If $g\in L^p_{u(2\a_\k)}(\RR_+)$ for $ 1 \le p \le \infty$,
then
$$
   \| \sT_{(x,t)} g\|_{p, \k} \le  e^{t/2} \|g\|_{p,  u(2\a_\k)}, \qquad 1 \le p \le \infty.
$$
\end{prop}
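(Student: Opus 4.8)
The plan is to prove the $L^p$ bound by estimating $\sT_{(x,t)} g$ directly from its integral representation in Definition~\ref{def:sT-transl}, treating the case $\a_\k \ge \f12$. The overall strategy is to dominate $|\sT_{(x,t)}g(y,s)|$ pointwise by an integral of $|g|$, then apply Minkowski's integral inequality (or a duality argument for $p=\infty$) to pull the $L^p$ norm inside. The crucial structural feature to exploit is that the argument of $g$ inside the translation is $t+s+2\rho\cos\t$, which is an affine function of $\cos\t$ ranging over $[t+s-2\rho, t+s+2\rho]$; combined with the constraint $\rho = \rho(x,t,y,s;u) = \sqrt{\tfrac12(ts+\sum x_iy_iu_i)}$ and the relation $t^*+s^* = t+s$, $\sqrt{t^*s^*}=\rho$ from the proof of Theorem~\ref{thm:reprod-closed}, the weight $\su_\k$ on $\VV_0^{d+1}$ should reorganize into the one-dimensional Laguerre weight $w_{2\a_\k}$ after integration against the sphere.

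First I would fix $(x,t)$ and write out $\|\sT_{(x,t)}g\|_{p,\k}^p$ as the integral over $\VV_0^{d+1}$ (in spherical-polar coordinates via \eqref{eq:intV0}) of $|\sT_{(x,t)}g(y,s)\,\e^{-s/2}|^p\,\su_\k(y,s)$. The sign condition $\a_\k\ge\f12$ guarantees that $j_{\a_\k-1}$ has a nonnegative integral representation (the modified Bessel function $I_{\a_\k-1}$ underlying $j_{\a_\k-1}$ via \eqref{eq:Mehler2}), so the translation kernel is a genuine positive measure; this is exactly why the two cases $\a_\k\ge\f12$ and $\a_\k<\f12$ must be separated. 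With positivity in hand, I can combine the $\theta$- and $u$-integrations in $\sT_{(x,t)}$ into a single probability-type average, using that $\sT_{(x,t)}\mathbf{1} = \sQ$-type normalization: the key normalizing identity is $\sT_{(x,t)}L_0^{2\a_\k} = \sP_0 = 1$ together with the constant $C_\k$ and $j_{\a_\k-1}(0)=1/(2^{\a_\k-1}\Gamma(\a_\k))$, which pins down the total mass of the kernel. For $p=1$ I would then integrate in $(y,s)$ first, collapsing the sphere integral using \eqref{eq:intVk} to convert the $V_\k$-average of $\rho$ into a one-dimensional integral, reducing everything to the classical one-variable Laguerre convolution bound of \cite{GM} applied to $w_{2\a_\k}$. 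For general $p$, Minkowski's inequality lets me bound $\|\sT_{(x,t)}g\|_{p,\k}$ by the kernel-average of $\|g(\cdot)\e^{-(\cdot)/2}\|$-type quantities, and the factor $\e^{t/2}$ emerges from the exponential $\e^{-\rho\cos\t}$ together with the shift $s\mapsto t+s+2\rho\cos\t$ in the weight $\e^{-s/2}$, since $\e^{-(t+s+2\rho\cos\t)/2}\e^{-\rho\cos\t}\cdots$ rebalances against $\e^{-s/2}$ precisely up to $\e^{t/2}$.

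The hard part will be verifying that, after integrating the positive kernel against the conic weight $\su_\k(y,s)\,\d\s(y,s)$ and performing the change of variables $s\mapsto r = t+s+2\rho\cos\t$, the resulting one-dimensional weight matches $w_{2\a_\k}(r)=r^{2\a_\k}\e^{-r}$ up to constants uniformly in $u$ and $\t$; this requires carefully tracking how $s^{2\a_\k}$, the Jacobian, the Bessel weight $(\sin\t)^{2\a_\k-1}$, and the $V_\k$-density $\Phi_\k(u)$ conspire, and invoking \eqref{eq:intVk} with $g$ chosen as the Laguerre-convolution integrand. The cleanest route is probably to prove the $p=1$ and $p=\infty$ endpoints directly and interpolate (Riesz–Thorin) for $1<p<\infty$, since the $p=\infty$ estimate follows almost immediately from the pointwise bound $|\sT_{(x,t)}g(y,s)|\e^{-s/2}\le \|g\|_{\infty,u(2\a_\k)}\,\e^{t/2}\,\sT_{(x,t)}\mathbf 1(y,s)$ and the normalization $\sT_{(x,t)}\mathbf 1=1$, while the $p=1$ bound is the substantive computation sketched above. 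The subtle point throughout is the role of $\a_\k\ge\f12$: it is needed not only for kernel positivity but also so that the exponent $2\a_\k-1\ge 0$ keeps $(\sin\t)^{2\a_\k-1}$ integrable and the \cite{GM} one-variable bound applicable with parameter $2\a_\k$.
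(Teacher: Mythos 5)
Your proposal founders on its central structural claim: that for $\a_\k \ge \f12$ the translation kernel is ``a genuine positive measure.'' It is not. The kernel of $\sT_{(x,t)}$ contains the oscillating Bessel factor $j_{\a_\k-1}\big(\rho\sin\t\big)$, i.e.\ $z^{-\nu}J_\nu(z)$ with $\nu=\a_\k-1$ --- not the modified Bessel function $I_\nu$ that appears in the Poisson-kernel formula \eqref{eq:Mehler}; you have conflated the two --- and $J_{\a_\k-1}$ has infinitely many sign changes, so the kernel is signed. The paper states explicitly, immediately after this proposition, that $\sT_{(x,t)}$ is \emph{not} a positive operator. This error propagates into both of your endpoint arguments: for $p=\infty$ you bound $|\sT_{(x,t)}g|$ by $\|g\|_{\infty,u(2\a_\k)}\,\e^{t/2}\,\sT_{(x,t)}\mathbf{1}$ and invoke $\sT_{(x,t)}\mathbf{1}=1$, but once $|g|$ is replaced by its sup you are integrating the \emph{absolute value} of the kernel, which the identity $\sT_{(x,t)}\mathbf{1}=1$ (an identity that depends on cancellation) does not control; likewise your $p=1$ step of combining the $\t$- and $u$-integrations ``into a single probability-type average'' has no justification for a signed kernel.

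The repair --- and this is exactly how the paper proceeds, and the true role of the hypothesis $\a_\k\ge\f12$ --- is to replace positivity by the uniform bound $|j_{\a_\k-1}(z)|\le j_{\a_\k-1}(0)=1/(2^{\a_\k-1}\Gamma(\a_\k))$, which follows from the integral representation \eqref{eq:jaInteg} precisely when the order satisfies $\a_\k-1\ge-\f12$. (For $\a_\k<\f12$ the function $j_{\a_\k-1}$ is \emph{unbounded}, since $j_\nu(z)$ behaves like $z^{-\nu-\f12}$ times an oscillation at infinity; that, and not the integrability of $(\sin\t)^{2\a_\k-1}$, which only requires $\a_\k>0$, is why the two cases must be separated.) With this bound one has $C_\k\,|j_{\a_\k-1}(z)|\le c_{\a_\k-\f12}\,c_{\k-\f{\mathbf{1}}{2}}$, and the \emph{dominating} kernel $(\sin\t)^{2\a_\k-1}\d\t\,\Phi_\k(u)\d u$, so normalized, is a probability measure. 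After that substitution your $\e^{t/2}$ bookkeeping for $p=\infty$ is correct, and your $p=1$ reduction --- collapsing the sphere integral via \eqref{eq:intVk} and changing variables $z=t+s+2\sqrt{ts}\,u$ to land on the one-dimensional weight $w_{2\a_\k}$ --- goes through essentially as in the paper, followed by Riesz--Thorin interpolation. So the skeleton of your argument matches the paper's, but as written the proof is invalid until the positivity claim is excised and replaced by the sup-bound on the Bessel factor.
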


\begin{proof}
Setting $x= t \xi$ and $y = s\eta$, $\xi, \eta \in \sph$, we can write 
$$
\rho(x,t,y,s;u) = \sqrt{t s} v(\xi,\eta;u), \qquad v(\xi,\eta;u) = \sqrt{\tfrac12 (1+ \xi_1 \eta_1 u_1 + \cdots  +\xi_d \eta_d u_d)}. 
$$
To simplify the notation, we write $\a = \a_\k$ throughout the proof and define 
$$
 G(t,s; z)  =  g\left(t+s + 2 \sqrt{ts}\, z\right) \e^{- \sqrt{t s}\, z}.
$$
The integral representation of the Bessel function \cite[(10.9.4)]{DLMF} states that
\begin{equation}\label{eq:jaInteg}
 j_{\a}(z) = \frac{2^{-\a}}{\sqrt{\pi}\Gamma(\a+\f12)} \int_{-1}^1 e^{\i z u} (1-u^2)^{\a-\f12}\d u, \quad \a > - \tfrac12, 
\end{equation}
and the formula holds under the limit when $\a \to -\f12+$, so that $j_{-\f12}(z) = \sqrt{\f{2}\pi} \cos z$. It 
implies, in particular, that $|j_{\a-1}(z)| \le 1/(2^{\a-1} \Gamma(\a))$ for all $z \in \RR$ and $\a \ge \f12$. 
With the explicit formula of $C_\k$, it follows that $C_\k | j_{\a-1}(z)| \le c_{\a - \f12}c_{\k - \f12}$. This last inequality 
leads to an upper bound of $|\sT_{(x,t)} g|$, 
\begin{align*}
 |\sT_{(x,t)} g(y,s)| \,& \le c_{\a-\f12}c_{\k-\f{\mathbf{1}}{2}}
    \int_{[-1,1]^d} \int_0^\pi \big |G\big(t,s; v(\xi,\eta;u)\cos\t\big) \big | (\sin\t)^{2\a-1} \d \t \Phi_\k(u) \d u \\
      & = : F(t, \xi, s, \eta).
 \end{align*} 
By the definition of $\|g\|_{\infty,\k}$, it follows readily that 
$$
    |G(t,s; u)| = \left| g\left(t+s + 2 \sqrt{ts}\, u\right)\right| \e^{- (t+s + 2 \sqrt{t s}\, u)/2} \e^{(t+s)/2}  
      \le \e^{(t+s)/2} \|g\|_{\infty, u(2\a_\k)},
$$
which leads immediately to 
$$
   \left \| \sT_{(x,t)}g \right \|_{\infty,\k} = \sup_{(y,s)}
        \left |\sT_{(x,t)}\big(t,s, v(\xi,\eta)\big) \right| \e^{- s/2}  \le \e^{t/2}\|g\|_{\infty, u(2\a_\k)}. 
$$
This establishes the stated result for $p = \infty$. 

Next we consider $p=1$. Using the intertwining operator $V_\k$, we can write
\begin{align*}
  F(t, \xi, s, \eta) =   
   c_{\a-\f12}  \int_0^\pi  V_\k \left[ \left |G \left (t,s;  \sqrt{\frac{1+\la\xi, \cdot\ra}{2}} \cos \t\right)\right| \right](\eta) 
      (\sin\t)^{2\a-1} \d \t. 
\end{align*}
Hence, using the identity \eqref{eq:intVk}, we obtain that 
\begin{align*}
  I_{(x,t)}(s):=  \, & c_\k^h  \int_{\sph}  F(t, \xi, s, \eta) h_\k^2(\eta) \d\s_\SS(\eta) \\
    =  \,&  c_{\a-\f12}  \int_0^\pi  c_{\a}  \int_{-1}^1 \left|G\left(t,s,  \sqrt{\f{1+u}{2}}\cos\t \right)\right|
     (1-u^2)^{\a - \f12} \d u  (\sin\t)^{2\a-1} \d \t  \\
   = \,& 2^{2 \a+1}   c_{\a-\f12}c_{\a} 
          \int_0^\pi \int_{0}^1 \left|G(t,s, v \cos \t)\right| v^{2 \a} (1-v^2)^{\a - \f12} \d v (\sin\t)^{2\a-1} \d \t,
\end{align*}
where the second step follows from changing variable $v\to \sqrt{(1+u)/2}$. Changing variable 
$\t\mapsto u = v\cos \t$ in the integral with respect to $\d \t$, we further obtain that
\begin{align}\label{eq:intTsph}
    I_{(x,t)}(s) \, & =   2^{2 \a+1}   c_{\a-\f12}c_{\a} 
      \int_0^1 \int_{-v}^v |G(t,s; u)| (v^2 - u^2)^{\a-1}\d u\, v  (1-v^2)^{\a - \f12} \d v  \\
  &  =   2^{2 \a+1}   c_{\a-\f12}c_{\a}   \int_{-1}^1 |G(t,s; u)|  
      \int_{|u|}^1 v (v^2 - u^2)^{\a - 1} (1-v^2)^{\a - \f12} \d v\, \d u \notag \\
  & =   2^{2 \a}   c_{\a-\f12}c_{\a} 
     \int_{-1}^1 |G(t,s; u)| (1-u^2)^{2\a - \f12} \d u   \int_0^1 z^{\a-1} (1-z)^{\a-\f{1}{2}} \d z \notag \\
  & = c_{2\a}  \int_{-1}^1 |G(t,s; u)| (1-u^2)^{2\a - \f12} \d u, \notag
\end{align}
where we have verified in the last step, using $\Gamma(2 a) = 2^{2a-1} \Gamma(a) \Gamma(a+\f12) /\sqrt{\pi}$, that 
$$
2^{2 \a} c_{\a-\f12}c_{\a} \int_0^1 z^{\a-1} (1-z)^{\a-\f{1}{2}} \d z = 
2^{2 \a} c_{\a-\f12}c_{\a}\frac{\Gamma(\a)\Gamma(\a+\frac12)}{\Gamma(2\a+\f12)} =   c_{2\a}.  
$$
Changing one more variable $u \to z = t+s - 2 \sqrt{t s} u$ in the last integral of \eqref{eq:intTsph}, we further 
obtain 
\begin{align}\label{eq:I-bound}
  I_{(x,t)}(s) \, & = c_{2\a} \int_{z_-}^{z_+} |g(z)| \e^{- \frac{z-t-s}{2}} \left(1- \frac{(z-t-s)^2}{4 ts}\right)^{2\a-\f12} \frac{\d z}{2 \sqrt{t s}}\\ 
      & = c_{2\a} \int_0^\infty |g(z)|  H_\a(t,s;z) z^{2 \a} \e^{-z} \d z,\notag
\end{align}
where $z_{\pm} = t+s \pm 2 \sqrt{ts} = (\sqrt{t} \pm \sqrt{s})^2$ and 
$$
  H_\a(t,s;z) = \frac{(t^2 + s^2 + z^2 - 2 t s - 2 t z -2 s z)^{2\a -\f{1}2}}{ ( 4 t s z )^{2\a}} \e^{\f{t+s+z}{2}},
$$
if $z \in [z_-, z_+]$ and $H_d(t,s; z) = 0$ otherwise. In particular, it follows that $H_\a(t,s,z) = 0$ if 
$2 t s+ 2 t z +2 s z \le  t^2 + s^2 +z^2$, so that $H_\a(t,s;z)$ is symmetric in $t, s$ and $z$ and 
it is nonnegative. Moreover, choosing $g(z) = \e^{(-z+t+s)/2}$ in the \eqref{eq:I-bound}, it follows readily that
\begin{align}\label{eq:IntH=1}
  c_{2\a} \int_0^\infty H_\a(t,s; z) z^{2\a} \e^{-\f{(z+t+s)} 2} \d z = c_{2\a} \int_{-1}^1  (1-u^2)^{2\a-\f12} \d u =1.
\end{align}
Consequently, exchanging the order of integrals and, by the symmetry of $H_\a(t,s;z)$, integrating 
$H_\a(t,s;z)$ with respect to $s$ first, we obtain by \eqref{eq:I-bound} and \eqref{eq:IntH=1} that
\begin{align*}
    \left \| \sT_{(x,t)}g \right \|_{1,\k}\, & =  \ssb_\k \int_{\VV_0^{d+1}} \left|\sT_{(x,t)} g(y,s)\e^{-s/2}\right| 
          s^{-1} h_\k^2(y)\d \s(y,s) \\  
    & \le  b_{2\a} \int_0^\infty   I_{(x,t)}(s)  s^{2 \a} \e^{-s/2} \d s \\
    & = c_{2\a}b_{2\a}  
    \int_0^\infty |g(z)| \int_0^\infty   H_\a(t,s;z)  s^{2\a} \e^{-(s+z)/2} \d s  z^{2\a} \e^{-z/2} \d z\\
    &  = \e^{t/2}b_{2\a} \int_0^\infty |g(z)| z^{2\a} \e^{-z/2} \d z = \e^{t/2} \|g\|_{1, u(2\a)},
\end{align*}
which completes the proof for $p =1$. The case $1 < p < \infty$ follows from the Riesz-Thorin theorem 
by interpolating between the estimates for $p=1$ and $p = \infty$. This completes the proof. 
\end{proof}

The operator $\sT_{(x,t)}$ is not a positive operator because of the presence of the Bessel function, but its 
boundedness in the $L^p_{\su(\k)}$ norm is as strong as that of a positive operator in the sense that 
it is bounded with a constant 1 in the right-hand side of the inequality in the Proposition \ref{prop:sTbd}. 

The above proposition holds under the assumption that $\a_\k \ge \f12$, which comes from the upper
bound of $j_{\a_\k -1}(t)$ deduced from \eqref{eq:jaInteg}. For $\a_\k < \f12$, we need to a different upper
bound of $j_{\a_\k-1}$ given in the following lemma. 

\begin{lem}\label{lem:j-bound}
For $\a > 0$ and $ t\in \in \RR_+$, 
$$
    \left | j_{\a-\f12}(t) \right | \le \frac{1}{2^{\a-1} \Gamma(\a)} +  \frac{t^{-1}}{2^{\a} \sqrt{\pi} \Gamma(\a+\f32)}. 
$$
\end{lem}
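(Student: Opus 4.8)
The plan is to prove the bound on the normalized Bessel function $j_{\a-\f12}(t)$ by exploiting the integral representation \eqref{eq:jaInteg} and isolating the contribution that is responsible for the desired two-term estimate. Writing $\nu = \a - \f12 > -\f12$, formula \eqref{eq:jaInteg} gives
$$
 j_{\nu}(t) = \frac{2^{-\nu}}{\sqrt{\pi}\,\Gamma(\nu+\f12)} \int_{-1}^1 \e^{\i t u} (1-u^2)^{\nu-\f12}\,\d u.
$$
The naive triangle-inequality bound only yields the first term $1/(2^\nu \Gamma(\nu+1)) = 1/(2^{\a-1}\Gamma(\a))$ and loses all decay in $t$. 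To capture the extra $t^{-1}$ factor, I would split the exponential into its real and imaginary parts. By symmetry of $(1-u^2)^{\nu-\f12}$ the imaginary part integrates to zero, so
$$
 j_{\nu}(t) = \frac{2^{-\nu}}{\sqrt{\pi}\,\Gamma(\nu+\f12)} \int_{-1}^1 \cos(t u)\,(1-u^2)^{\nu-\f12}\,\d u,
$$
and it is enough to bound this real integral.

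The key step is to split the range of integration at a point where $\cos(tu)$ changes sign, or equivalently to compare the integral against its value at $t=0$. First I would write $\cos(tu) = 1 - (1-\cos(tu))$; the ``$1$'' piece reproduces exactly the constant $1/(2^\nu\Gamma(\nu+1))$, which is the first term in the claimed bound (using the Beta-integral evaluation of $\int_{-1}^1(1-u^2)^{\nu-\f12}\d u$ already recorded via $c_\a$ in the paper). It then remains to control
$$
 \frac{2^{-\nu}}{\sqrt{\pi}\,\Gamma(\nu+\f12)} \int_{-1}^1 \bigl(1-\cos(t u)\bigr)(1-u^2)^{\nu-\f12}\,\d u,
$$
but since $1-\cos$ is nonnegative this integral is itself nonnegative, so the subtraction can only decrease $j_\nu(t)$ and does not immediately give an upper bound in absolute value. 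The cleaner route is instead to integrate by parts once, transferring the derivative from $\cos(tu)$ to $(1-u^2)^{\nu-\f12}$: integrating $\cos(tu)$ gives $\sin(tu)/t$, which produces the decisive factor $t^{-1}$, while the boundary terms vanish because $(1-u^2)^{\nu-\f12}$ vanishes at $u=\pm1$ (here $\nu-\f12 = \a - 1 > -1$ but one needs $\a>\f12$ for a clean vanishing; for $0 < \a \le \f12$ a limiting or direct argument is needed). After integration by parts the integrand becomes, up to constants, $\sin(tu)\,u\,(1-u^2)^{\nu-\f32}$, and bounding $|\sin(tu)|\le 1$ and evaluating the resulting Beta integral $\int_{-1}^1 |u|(1-u^2)^{\nu-\f32}\d u$ should yield precisely the second term $t^{-1}/(2^\a\sqrt{\pi}\,\Gamma(\a+\f32))$.

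The main obstacle is the low-parameter range $0 < \a < \f12$, where $\nu - \f32 = \a - 2 < -1$ and the integration-by-parts boundary terms may fail to vanish or the resulting integral may diverge; there the bound must be justified more carefully, likely by an alternate splitting of the $u$-integral into a region near $u=\pm1$ (handled by the crude triangle inequality) and an interior region (where the oscillation of $\cos(tu)$ provides the $t^{-1}$ gain), matched so that the two contributions combine to the stated sum. I would therefore present the argument uniformly by bounding $|j_\nu(t)|$ by the $t=0$ value for the first term and extracting the $t^{-1}$ decay from the oscillatory cancellation for the second, checking at the end that the constant in front of $t^{-1}$ reduces exactly to $1/(2^\a\sqrt{\pi}\,\Gamma(\a+\f32))$ via the duplication formula $\Gamma(2a) = 2^{2a-1}\Gamma(a)\Gamma(a+\f12)/\sqrt{\pi}$ already used elsewhere in the paper.
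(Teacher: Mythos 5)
There is a genuine gap, and it has two layers. Your central device --- integrating by parts directly in the representation \eqref{eq:jaInteg} of $j_{\alpha-\frac12}$, so that the derivative falls on $(1-u^2)^{\alpha-1}$ --- is only legitimate for $\alpha>1$, not merely problematic for $\alpha<\frac12$ as you state: the boundary term $\bigl[t^{-1}\sin(tu)(1-u^2)^{\alpha-1}\bigr]_{u=-1}^{u=1}$ blows up for every $\alpha<1$, and the post-integration Beta integral $\int_{-1}^1|u|(1-u^2)^{\alpha-2}\,\d u$ diverges for every $\alpha\le 1$. Moreover, even where the computation is valid ($\alpha>1$) it does not produce the claimed constant: since $\int_{-1}^1|u|(1-u^2)^{\alpha-2}\,\d u=\frac{1}{\alpha-1}$ exactly cancels the factor $2(\alpha-1)$ created by differentiation, you obtain $|j_{\alpha-\frac12}(t)|\le \frac{2^{\frac32-\alpha}}{\sqrt{\pi}\,\Gamma(\alpha)}\,t^{-1}$, whose coefficient is far larger than $\frac{1}{2^{\alpha}\sqrt{\pi}\,\Gamma(\alpha+\frac32)}$; no factor $\Gamma(\alpha+\frac32)$ can appear once the Beta integral has collapsed. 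Your fallback for small $\alpha$ (splitting the $u$-integral near $u=\pm1$) is a statement of intent rather than an argument, and it is exactly where the difficulty sits, so the proposal does not prove the lemma.

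The paper's proof avoids all of this by a step you missed: it first shifts the index \emph{up} via the three-term recurrence $J_{\alpha-1}(t)+J_{\alpha+1}(t)=2\alpha t^{-1}J_\alpha(t)$, i.e. $j_{\alpha-1}(t)=2\alpha j_\alpha(t)-t^2 j_{\alpha+1}(t)$, bounds $2\alpha|j_\alpha(t)|\le \frac{1}{2^{\alpha-1}\Gamma(\alpha)}$ trivially from \eqref{eq:jaInteg}, and only then integrates by parts --- in the representation of $j_{\alpha+1}$, whose weight $(1-u^2)^{\alpha+\frac12}$ vanishes at $u=\pm1$ and has integrable derivative for \emph{every} $\alpha>0$. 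This yields $t\,|j_{\alpha+1}(t)|\le \frac{2^{-\alpha}}{\sqrt{\pi}\,\Gamma(\alpha+\frac32)}$ uniformly in $\alpha>0$, and the two estimates combine through the recurrence. Note also that what this argument proves, and what is actually used in the proof of Proposition \ref{prop:sTbd2} (where the second term appears as $\frac{z}{(2\alpha+1)\pi}$, growing in $z$), is a bound for $j_{\alpha-1}$ whose second term is $\frac{t}{2^{\alpha}\sqrt{\pi}\,\Gamma(\alpha+\frac32)}$; the exponent $-1$ on $t$ and the subscript $\alpha-\frac12$ in the printed statement are typos. Your plan took the printed $t^{-1}$ at face value and tried to manufacture genuine decay in $t$, which is both unobtainable with the stated constants by your splitting and not what the downstream application requires.
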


\begin{proof}
We start from the three-term relation $J_{\a-1}(t) + J_{\a+1}(t) = 2\a t^{-1} J_\a(t)$ of the Bessel function
\cite[(10.6.1)]{DLMF}, which gives immediately
$$
       j_{\a-1}(t) = 2 \a j_\a(t) - t^2 j_{\a+1}(t), \qquad \a > 0.  
$$
We use \eqref{eq:jaInteg} for $j_{\a+1}$ and integrate by parts once to obtain 
\begin{align*}
   j_{\a+1}(t) =  \frac{2^{-\a-1}}{\sqrt{\pi}\Gamma(\a+\f32)} \frac{2\a+1}{i t}  \int_{-1}^1 \e^{\i t u} u (1-u^2)^{\a-\f12}\d u
\end{align*}
which implies that $ t \left | j_{\a+1}(t) \right | \le  \frac{2^{-\a}}{\sqrt{\pi}\Gamma(\a+\f32)}$. Hence, 
using $2\a |j_\a(t)| \le \frac{1}{2^{\a-1} \Gamma(\a)}$, the desired estimate follows from the three term relation.
\end{proof}

\begin{prop} \label{prop:sTbd2}
Let $d \ge 2$ and $\a_\k= |\k| + \f{d-2}{2} < \f12$. For $g\in L^1_{u(2\a_\k)}(\RR_+)\cap L^1_{u(2\a_\k+\f12)}(\RR_+)$, 
$$
   \| \sT_{(x,t)} g\|_{1, \k} \le  e^{t/2} \|g\|_{1,u(2\a_k)} + \frac{1}{(2\a_\k+1)\sqrt{\pi}} \sqrt{t} \e^{t/2} \|g\|_{1, u(2\a_\k+\f12)}.
$$
\end{prop}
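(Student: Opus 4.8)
The plan is to follow the same scheme as the proof of Proposition~\ref{prop:sTbd}, treating the case $p=1$ directly. The obstruction is that when $\a_\k<\f12$ the index $\a_\k-1$ of the Bessel factor $j_{\a_\k-1}$ appearing in Definition~\ref{def:sT-transl} drops below $-\f12$, so the uniform bound $|j_{\a_\k-1}(z)|\le 1/(2^{\a_\k-1}\Gamma(\a_\k))$ that drove Proposition~\ref{prop:sTbd} is no longer at hand. Instead I would invoke the estimate established in (the proof of) Lemma~\ref{lem:j-bound}, which for the index $\a_\k-1$ reads
$$
 |j_{\a_\k-1}(z)|\le \frac{1}{2^{\a_\k-1}\Gamma(\a_\k)}+\frac{2^{-\a_\k}}{\sqrt{\pi}\,\Gamma(\a_\k+\f32)}\,z,\qquad 0<\a_\k<\tfrac12 .
$$
Inserting this into the defining integral of $\sT_{(x,t)}g$ and using the explicit value of $C_\k$ from \eqref{eq:PnLV0}, I would split $|\sT_{(x,t)}g(y,s)|$ into a \emph{main term}, carrying the constant $C_\k/(2^{\a_\k-1}\Gamma(\a_\k))=c_{\a_\k-\f12}c_{\k-\f{\mathbf 1}2}$, and a \emph{remainder term}, carrying $C_\k\,2^{-\a_\k}/(\sqrt{\pi}\,\Gamma(\a_\k+\f32))=c_{\k-\f{\mathbf 1}2}/(\pi(2\a_\k+1))$ together with the extra factor $\rho(x,t,y,s;u)\sin\t$.

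The main term is \emph{verbatim} the quantity estimated in Proposition~\ref{prop:sTbd}: the reduction through the intertwining operator $V_\k$ and the identity \eqref{eq:intVk}, the changes of variable producing \eqref{eq:intTsph}, and the symmetric kernel $H_{\a_\k}$ normalized by \eqref{eq:IntH=1} give exactly $\e^{t/2}\|g\|_{1,u(2\a_\k)}$, so I would simply quote that computation. The new work lies in the remainder term. Writing $\rho=\sqrt{ts}\,v(\xi,\eta;u)$ and $G(t,s;z)=g(t+s+2\sqrt{ts}\,z)\e^{-\sqrt{ts}\,z}$ as before, the factor $\rho\sin\t$ turns the remainder into $\tfrac{\sqrt{ts}}{\pi(2\a_\k+1)}$ times an integral with one additional power of $\sin\t$ and an extra factor $v$. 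Running the same reduction — apply $V_\k$, then \eqref{eq:intVk}, then change variables $w\mapsto v\mapsto u$ — the extra $v$ raises the radial weight from $(1-u^2)^{2\a_\k-\f12}$ to $(1-u^2)^{2\a_\k}$, i.e. it shifts the Laguerre parameter by $\f12$.

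After the substitution $u\mapsto z=t+s+2\sqrt{ts}\,u$ this yields, in parallel with \eqref{eq:I-bound}, an expression $\int_0^\infty |g(z)|\,\wt H_{\a_\k}(t,s;z)\,z^{2\a_\k+\f12}\e^{-z}\,\d z$, whose kernel $\wt H_{\a_\k}$ has numerator the symmetric quantity $2(ts+tz+sz)-(t^2+s^2+z^2)$ raised to the power $2\a_\k$. The crux of the proof, and the step I expect to demand the most careful bookkeeping, is the final integration in $s$. One checks that $\wt H_{\a_\k}(t,s;z)$ is symmetric in $t,s,z$, so that $s$ may be integrated before $z$; the decisive point is that multiplying $\wt H_{\a_\k}$ by $s^{2\a_\k+\f12}\e^{-s/2}$ cancels all $s$-powers and exponentials, reducing $\int_0^\infty \wt H_{\a_\k}(t,s;z)\,s^{2\a_\k+\f12}\e^{-s/2}\,\d s$ to the elementary integral $\int_{s_-}^{s_+}\bigl(4tz-(s-t-z)^2\bigr)^{2\a_\k}\,\d s$ over $s_\pm=(\sqrt t\pm\sqrt z)^2$, a Beta integral equal to $\e^{(t+z)/2}\sqrt{\pi}\,\Gamma(2\a_\k+1)/\Gamma(2\a_\k+\f32)$.

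Collecting the constants then finishes the estimate: using $c_\a$ from \eqref{eq:cl} and the duplication formula $\Gamma(2\a_\k+1)=2^{2\a_\k}\pi^{-1/2}\Gamma(\a_\k+\f12)\Gamma(\a_\k+1)$, the accumulated prefactor collapses to $1$, one power $\sqrt s$ of the factor $\sqrt{ts}$ having been absorbed into the shifted measure $s^{2\a_\k+\f12}$ while the surviving $\sqrt t$ remains outside; together with $\e^{(t+z)/2}\e^{-z}=\e^{t/2}\e^{-z/2}$ this produces precisely $\tfrac{1}{(2\a_\k+1)\sqrt{\pi}}\sqrt t\,\e^{t/2}\|g\|_{1,u(2\a_\k+\f12)}$. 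Adding the two contributions gives the stated inequality. The argument as described requires $0<\a_\k<\f12$, where the first branch of Definition~\ref{def:sT-transl} is in force; the endpoint $\a_\k=0$ (the case $d=2$, $\k=0$) is then recovered through the limit \eqref{eq:limitInt} that links the two branches of the definition.
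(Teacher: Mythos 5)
Your treatment of the range $0<\a_\k<\f12$ is correct and is essentially the paper's own argument: the same splitting of $j_{\a_\k-1}$ via (the corrected form of) Lemma~\ref{lem:j-bound}, the same main term quoted verbatim from Proposition~\ref{prop:sTbd}, and the same remainder computation in which the extra factor $\rho\sin\t$ raises the weight from $(1-u^2)^{2\a_\k-\f12}$ to $(1-u^2)^{2\a_\k}$, hence shifts the Laguerre parameter by $\f12$, followed by the symmetry-plus-Beta-integral evaluation of the $s$-integral. One bookkeeping slip: if you absorb $\sqrt{ts}/\sqrt{z}$ into your kernel $\wt H_{\a_\k}$ so that the measure reads $z^{2\a_\k+\f12}\e^{-z}\d z$, then $\wt H_{\a_\k}$ is \emph{not} symmetric in $t,s,z$ (the stray $\sqrt z$ spoils it); the paper keeps the symmetric kernel $H^{(1)}_{\a_\k}$ with measure $z^{2\a_\k}\e^{-z}\d z$ and prefactor $\sqrt{ts}$ outside, and the $\sqrt z$ producing the shifted norm only appears after the $s$-integration. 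This is cosmetic and easily repaired.

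The genuine gap is the endpoint $\a_\k=0$, i.e.\ $d=2$, $\k=0$, which is part of the statement (indeed the case of $\sw_0$ on $\VV_0^3$ that the paper emphasizes). There $\sT_{(x,t)}$ is \emph{defined} by the second branch of Definition~\ref{def:sT-transl}, and your one-line appeal to ``the limit \eqref{eq:limitInt}'' does not suffice, for two reasons. First, \eqref{eq:limitInt} describes concentration of the weight $c_a(1-v^2)^{a-\f12}$ at $v=\pm1$, so it can only account for the two point-evaluation terms of the second branch; it cannot produce the additional term $-\f12\rho^2\int_0^\pi g(\cdot)\e^{-\rho\cos\t}j_1(\rho\sin\t)\sin\t\,\d\t$. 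The actual link between the two branches requires the three-term relation $j_{\a-1}(z)=2\a j_\a(z)-z^2j_{\a+1}(z)$ from the proof of Lemma~\ref{lem:j-bound}: the piece $2\a_\k j_{\a_\k}$, integrated against the blowing-up weight $(\sin\t)^{2\a_\k-1}$, concentrates into the point masses, while $-\rho^2 j_{\a_\k+1}(\rho\sin\t)(\sin\t)^{2\a_\k+1}$ converges to the $j_1$-integral. Second, even granting that identification, transferring the \emph{inequality} would require letting $\k\to0$ with $d=2$ and justifying convergence of $\sT_{(x,t)}g$ and of both weighted norms, none of which you address. The paper avoids all of this by a direct argument that you should substitute for your limiting remark: write $\sT_{(x,t)}=\sT_{(x,t)}^{(1)}+\sT_{(x,t)}^{(2)}$ (point-mass part plus $j_1$-integral part); estimate $\sT^{(1)}$ by integrating over $\SS^1$ with $\la\xi,\eta\ra=\cos\phi$, which lands exactly on the integral \eqref{eq:intTsph} with $\a_\k=0$ and yields $\e^{t/2}\|g\|_{1,u(0)}$; and estimate $\sT^{(2)}$ using $|zj_1(z)|\le\f2\pi$, which reduces it to the $F_1$-computation \eqref{eq:F1} with $\a=0$ and yields the $\sqrt t\,\e^{t/2}\|g\|_{1,u(\f12)}$ term.
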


\begin{proof}
Let $\a = \a_k$. We need to consider two cases, $\a =0$ and $\a >0$. Assume first $\a > 0$. We adopt 
the same notation as in the proof of the previous proposition for $\a \ge \f12$. Using  Lemma \ref{lem:j-bound}, 
we obtain $C_\k |j_{\a-\f12}(z)| \le c_{\a-\f12} c_{\k-\f{\mathbf{1}}2}+ \frac{z}{(2\a+1)\pi}c_{\k-\f{\mathbf{1}}2}$. 
Applying this inequality with $z = \rho(x,t,y,s;u)= \sqrt{t s} v (\xi,\eta;u)$,  we obtain 
\begin{align*}
  \left | \sT_{(x,t)} g(y,s) \right| \le F(t, \xi, s, \eta)+ F_1(t, \xi, s, \eta),
\end{align*}
where $F(t, \xi, s, \eta)$ is the same as before and $F_1(t, \xi, s, \eta)$ is defined by 
\begin{align}\label{eq:F1}
F_1(t, \xi, s, \eta) =  \frac{\sqrt{t s}}{(2\a+1)\pi} \int_0^\pi  V_\k & \left[ \sqrt{\frac{1+\la\xi, \cdot\ra}{2}} \left |G \left (t,s;  \sqrt{\frac{1+\la\xi, \cdot\ra}{2}} \cos \t\right)\right| \right](\eta) \notag  \\
    &  \times (\sin\t)^{2\a} \d \t  
\end{align}
in terms of the intertwining operator $V_\k$. The estimate of $F(t,\xi,s, \eta)$ for $\a \ge \f12$ remains valid 
for $\a < \f12$, which gives the first term in the right-hand side of the desired estimate.  The estimate of 
$F_1(t,\xi,s,\eta)$ can be carried out by the similar approach. In particular, following the proof of Proposition
\ref{prop:sTbd} and checking the constant carefully, we obtain 
\begin{align*} 
I_{(x,t)} (s) := \, &  c_\k^h \int_{\SS^{d-1}} F_1(t,\xi,s,\eta)  h_\k^2(\eta) d\s_{\SS}(\eta) \\
                   =  \,&  \frac{\sqrt{t s}}{(2\a+1)\pi} \int_{-1}^1 |G(t,s;u)| (1-u^2)^{2\a} \d u. \notag
\end{align*}
Changing variable $u \to z = t+s - 2 \sqrt{t s} u$ as in the case of $\a_\k \ge \f12$, we further obtain
\begin{align*}
  I_{(x,t)}(s) = \frac{1}{(2\a+1)\pi} \int_0^\infty |g(z)|  H_\a^{(1)}(t,s;z) z^{2 \a} \e^{-z} \d z,\notag
\end{align*}
where $H_\a^{(1)}$ is defined by 
$$
  H_\a^{(1)}(t,s;z) = \frac{(t^2 + s^2 + z^2 - 2 t s - 2 t z -2 s z)^{2\a }}{ 2 ( 4 t s z )^{2\a}} \e^{\f{t+s+z}{2}},
$$
if $z \in [z_-, z_+]$ and $H_d(t,s; z) = 0$ otherwise. Again, $H_\a^{(1)}(t,s;z)$ is symmetric in $t, s$ and $z$ and 
it is nonnegative. Furthermore, we also have
\begin{align*} 
    \int_0^\infty H_\a(t,s; z)^{(1)} z^{2\a} \e^{-\f{(z+t+s)} 2} \d z = \sqrt{ts} \int_{-1}^1  (1-u^2)^{2\a} \d u = 
       \frac{\sqrt{ts}}{c_{2\a+\f12}}.
\end{align*} 
Consequently, continuing as in the proof of Proposition \ref{prop:sTbd}, we conclude that 
\begin{align*}
  \| F_1(t,\xi, \cdot) \|_{1,\k}\, & =  b_{2\a} \int_0^\infty  I_{(x,t)}(s)  s^{2 \a} \e^{-s/2} \d s \\
    & =  \frac{ b_{2\a}}{(2\a+1)\pi}   
    \int_0^\infty |g(z)| \int_0^\infty   H_\a^{(1)}(t,s;z)  s^{2\a} \e^{-(s+z)/2} \d s  z^{2\a} \e^{-z/2} \d z\\
    &  =\sqrt{t}\e^{t/2} \frac{ b_{2\a}}{(2\a+1)\pi c_{\a+\f12}} \int_0^\infty |g(z)| z^{2\a+\f12} 
         \e^{-z/2} \d z \\
    & = \frac{1}{(2\a+1)\sqrt{\pi}} \sqrt{t} \e^{t/2} \|g\|_{1, u(2\a+\f12)},
\end{align*}
where we have used $b_{2\a} = 1/\Gamma(2\a+1)$ and $1/c_{2\a+\f12} = b_{2\a+\f12}\sqrt{\pi}/b_{2\a}$
in the last step. This completes the proof for $\a > 0$.  

Next we consider $\a =0$, which is equivalent to $\k = 0$ and $d =2$ since $|\k| \ge 0$ and $d \ge 2$. This
corresponds to the Laguerre weight $\sw_0(x,t) = t^{-1} \e^{-t}$ on the conic surface $\VV_0^3$ of $\RR^3$. 
In this case, we write the operator $\sT_{(x,t)}$ as a sum of two parts,
$$
\sT_{(x,t)} =  \sT_{(x,t)}^{(1)}+  \sT_{(x,t)}^{(2)},
$$
where, using the notation $G(t,s;u)$ and $v(\xi,\eta) = \sqrt{\f12(1+\la \xi,\eta\ra)}$, we have 
\begin{align*}
  \sT_{(x,t)}^{(1)} g(y,s) \, & =  \frac 12 \left[G(t,s; v(\xi,\eta)) +  G(t,s; - v(\xi,\eta))\right]; \\
  \sT_{(x,t)}^{(2)} g(y,s)  \,& = - \f{t s}{2} [v(\xi,\eta)]^2 \int_0^\pi G(t,s; v(\xi,\eta) \cos\t) 
               j_1 \big(\sqrt{ts} v(\xi,\eta) \sin \t\big) \sin \t \d\t.
\end{align*}
For $\sT_{(x,t)}^{(1)}g$, we first integrate over $\SS^1$, which is the unit circle and $\ssb_2 = 2\pi$. 
Let $\la \xi, \eta \ra = \cos \phi$. Then $v(\xi,\eta) = \cos \frac{\phi}{2}$. Hence, 
\begin{align*}
  \frac{1}{2\pi} \int_{\SS^1} \left|  \sT_{(x,t)}^{(1)} g(y,s) \right| d\s \, & =  \frac{1}{4\pi} \int_0^{2 \pi}
      \left | G\left(t,s; \cos \f{\phi}2\right) +  G\left (t,s; - \cos \f{\phi}2\right) \right | \d \phi \\
      & = \frac{1}{2\pi} \int_{-1}^1 \left | G(t,s; u) +  G(t,s; - u) \right | \frac{\d u}{\sqrt{1-u^2}} \\
       & \le \frac{1}{\pi} \int_{-1}^1 | G(t,s; u)| \frac{\d u}{\sqrt{1-u^2}}. 
\end{align*}
Sine $c_0 = 1/\pi$, we see that the integral in the right-hand side is the integral in \eqref{eq:intTsph} 
when $\a_k =0$. The proof below \eqref{eq:intTsph} for $\a_\k \ge \f12$ remains valid for $\a_\k = 0$, 
which proves the bound for $\|\sT_{(x,t)}^{(1)} g\|_{1,0} \le  e^{t/2} \|g\|_{1,u(0)}$. For $\sT_{(x,t)}^{(2)}$, 
we use the bound $ | z j_1(z)| \le \frac2 \pi$ in the proof of Lemma \ref{lem:j-bound} to obtain 
$$
  \left|\sT_{(x,t)}^{(2)} g(y,s) \right| \le \f 1 \pi \sqrt{t s} |v(\xi,\eta)| \int_{0}^\pi \left|G(t,s; v(\xi,\eta) \cos\t)\right| \d \t.
 $$
Since $V_\k$ becomes identity operator when $\k = 0$, the right-hand side of the above expression 
coincides with $F_1(t,\xi,s,\eta)$ with $\a =0$. The evaluation of the norm of $F_1$ in the case of $\a > 0$
remains valid for $\a =0$, so that the norm $\|\sT_{(x,t)}^{(2)} g\|_{1,0} \le \f1{\sqrt{\pi}} \|g\|_{1,u(\f12)}$ 
follows right away. This completes the proof. 
\end{proof}
 
\subsection{Pseudo convolution structure on the conic surface}
We define a pseudo convolution structure on the conic surface. 

\begin{defn}\label{def:f*g}
Let $d \ge 2$, $\k_i \ge 0$ and $\a_\k = |\k|+\f{d-2}2$. For $f \in L^1_{\su(\k)}(\VV_0^{d+1})$ and
$g\in L^1_{u(2\a_\k)}(\RR_+)$, define
$$
   f* g(x,t) = \ssb_{\k} \int_{\VV_0^{d+1}} f(y,s) \sT_{(x,t)} g(y,s) \sw_{\k} (y,s) d\s(y,s), \quad (x,t) \in \VV_0^{d+1}.
$$
\end{defn}

From \eqref{eq:TLn=Pn} it follows readily that, for $f \in L^2_{\su(\k)}(\VV_0^{d+1})$, 
\begin{equation}\label{eq:f*Ln}
  \proj_n (\sw_{\k}; f) = f* L_n^{2|\k|+d-2}, \qquad n = 0, 1, 2, \ldots. 
\end{equation}
The following proposition justifies calling it a convolution.  

\begin{prop}\label{prop:conv-coeff}
Let $d\ge 2$. For $f\in L^2(\VV_0^{d+1}, \sw_\k)$ and $g \in L^2(\RR_+, w_{2\a_\k})$,  
\begin{equation}\label{eq:conv-coeff}
      \proj_n \big(\sw_\k; f*g\big) = \wh g_n^{2\a_\k} \proj_n \big(\sw_{\k}; f\big),
\end{equation}
where $\wh g_n^{2\a_\k}$ is the Fourier-Laguerre coefficient as defined in \eqref{eq:FourierLaguerre}.
In particular, 
\begin{equation}\label{eq:f*gL2}
         \|f*g\|_{2,k} \le \|f\|_{2,\k} \|g\|_{2,u(2\a_\k)}. 
\end{equation}
\end{prop}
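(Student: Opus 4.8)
The plan is to reduce everything to the single identity \eqref{eq:f*Ln}, which exhibits convolution with the $n$-th Laguerre polynomial as the orthogonal projection $\proj_n(\sw_\k;\cdot)$. Since $f*g$ is linear in $g$ and the Laguerre polynomials span a dense subspace of $L^2(\RR_+,w_{2\a_\k})$, I would first verify both assertions for $g$ equal to a finite linear combination of Laguerre polynomials, where no convergence issue can arise, and then extend to general $g$ by density together with the a priori bound \eqref{eq:f*gL2}.

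For the dense case, expand $g=\sum_k c_k L_k^{2\a_\k}$ (a finite sum), with $c_k=\wh g_k^{2\a_\k}$ its $k$-th Fourier--Laguerre coefficient. Bilinearity of the convolution and \eqref{eq:f*Ln} give $f*g=\sum_k c_k\,\proj_k(\sw_\k;f)$, a finite sum whose summands lie in the mutually orthogonal spaces $\CV_k(\VV_0^{d+1},\sw_\k)$. Applying $\proj_n(\sw_\k;\cdot)$ and using $\proj_n\proj_k=\delta_{n,k}\proj_n$ isolates the single surviving term $c_n\,\proj_n(\sw_\k;f)$, which is precisely \eqref{eq:conv-coeff}. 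The norm inequality then follows from the same orthogonal decomposition: one has $\|f*g\|_{2,\k}^2=\sum_k|c_k|^2\,\|\proj_k(\sw_\k;f)\|_{2,\k}^2$, and since each projection satisfies $\|\proj_k(\sw_\k;f)\|_{2,\k}\le\|f\|_{2,\k}$ while $h_k^{2\a_\k}\ge1$ gives $\sum_k|c_k|^2\le\sum_k|c_k|^2 h_k^{2\a_\k}=\|g\|_{2,u(2\a_\k)}^2$ by the orthogonality of the Laguerre polynomials, one obtains $\|f*g\|_{2,\k}\le\|f\|_{2,\k}\,\|g\|_{2,u(2\a_\k)}$.

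To pass to arbitrary $g\in L^2(\RR_+,w_{2\a_\k})$, note that the bound just proved on the dense subspace shows that $g\mapsto f*g$ extends to a bounded linear map into $L^2(\VV_0^{d+1},\sw_\k)$; both sides of \eqref{eq:conv-coeff} depend continuously on $g$ in this topology, so the multiplier identity and the norm bound survive the limit. One should also check that this extension agrees with Definition \ref{def:f*g}: when $\a_\k\ge\tfrac12$, Proposition \ref{prop:sTbd} at $p=2$ gives $\sT_{(x,t)}g\in L^2(\VV_0^{d+1},\sw_\k)$, so the defining integral converges as a Cauchy--Schwarz pairing in $(y,s)$ for a.e.\ $(x,t)$ and coincides with the limit; the remaining case $d=2$, $\a_\k<\tfrac12$ is handled purely by the density argument.

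The algebraic heart of the proposition is thus the one-line consequence \eqref{eq:f*Ln} of the closed form of the reproducing kernel, and I expect the only genuine obstacle to lie in the functional-analytic step: justifying the interchange of the Fourier--Laguerre series of $g$ with the convolution, and confirming that $f*g$ is even well defined for merely $L^2$ data (observe that $L^2(\RR_+,w_{2\a_\k})$ is not contained in the $L^1$ space of Definition \ref{def:f*g}). Establishing the estimate first on polynomials and only then extending by continuity is the device that circumvents any direct limit interchange.
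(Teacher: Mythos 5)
Your proof is correct, and its algebraic heart is the same as the paper's: everything rests on \eqref{eq:TLn=Pn}/\eqref{eq:f*Ln}, the Laguerre expansion of $g$, mutual orthogonality of the spaces $\CV_n(\VV_0^{d+1},\sw_\k)$, and Parseval. Where you genuinely differ is the treatment of the limit. The paper expands $g$ in its full infinite Laguerre series and applies $\sT_{(x,t)}$, and then the integration against $f$, term by term, with no justification of either interchange; you instead prove both assertions for finite Laguerre combinations (where only finite sums occur), extract the bound \eqref{eq:f*gL2} on that dense subspace, and extend by continuity, checking via Proposition \ref{prop:sTbd} at $p=2$ that the extension agrees with the integral of Definition \ref{def:f*g} when $\a_\k\ge\f12$. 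This buys real rigor: your parenthetical observation that $L^2(\RR_+,w_{2\a_\k})\not\subset L^1_{u(2\a_\k)}(\RR_+)$ is correct (for $\a_\k\ge0$ the $L^1_{u}$ weight $t^{2\a_\k}\e^{-t/2}$ decays only half as fast as the $L^2$ weight forces $g$ to grow), so Definition \ref{def:f*g} does not even apply verbatim under the hypotheses of the proposition --- a point the paper passes over in silence and your architecture resolves. What the paper's version buys in exchange is only brevity.

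Two minor caveats, neither of which is a defect relative to the paper. First, with the normalization of \eqref{eq:FourierLaguerre} the coefficient of $L_k^{2\a_\k}$ in the expansion of $g$ is $\wh g_k^{2\a_\k}\big/\sqrt{h_k^{2\a_\k}}$, not $\wh g_k^{2\a_\k}$, so your $c_k$ and the coefficient appearing in \eqref{eq:conv-coeff} differ by the factor $\sqrt{h_n^{2\a_\k}}$; the paper's own proof makes the identical conflation, and your inequality $h_k^{2\a_\k}\ge1$ (valid since $2\a_\k=2|\k|+d-2\ge0$) is exactly what keeps \eqref{eq:f*gL2} true under either convention. Second, for $\a_\k<\f12$ you define $f*g$ only as the continuous extension and do not verify agreement with Definition \ref{def:f*g} for those $g\in L^2\cap L^1_{u(2\a_\k)}$ where the integral does converge (no $L^2$ bound on $\sT_{(x,t)}$ is available in that range); the paper leaves this same interpretive point open, so your proof is no weaker than the original there.
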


\begin{proof}
Writing $g$ in terms of its Laguerre expansion in $L^2(\RR_+, w_{2\a_\k})$ as in \eqref{eq:FourierLaguerre}, 
it follows by \eqref{eq:TLn=Pn} that 
$$
  \sT_{(x,t)} g 
        =  \sum_{n=0}^\infty  \wh g_n^{2\a_\k} \sP_n((x,t), \cdot) \quad \hbox{and}\quad
      f*g =    \sum_{n=0}^\infty  \wh g_n^{2\a_\k} \proj_n(\sw_\k),
$$
where the second identity follows from the first one by the definition of $f*g$. Since $\|\cdot\|_{2,\k}$ 
coincides with the norm of $L^2(\VV_0^{d+1}, \sw_\k)$ and $\|\cdot\|_{2,u(\a)}$ coincides with the 
norm of $L^2(\RR_+, w_\a)$, we have the Parseval identities 
$$
  \|f*g\|_{2,k}^2 = \sum_{n=0}^\infty \left \| \proj_n \big(\sw_\k; f*g\big) \right \|_{2,\k}^2  \qquad 
  \hbox{and}\qquad 
   \|g\|_{2,u(\a_\k)}^2 = \sum_{n=0}^\infty  \left| \wh g_n^{2\a_\k} \right|^2. 
$$
Hence, \eqref{eq:f*gL2} follows from \eqref{eq:conv-coeff} and the Cauchy inequality. 
\end{proof}

We now show that the pseudo convolution operator is bounded in $L^p_{\su(\k)}(\VV_0^{d+1})$ for $p \ne 2$.
We again need to consider two cases: $\a_\k \ge \f12$ and $\a_k < \f12$. 

\begin{thm} \label{thm:f*g-bdd}
Let $d \ge 2$. Assume $\a_\k = |\k|+\f{d-2}2 \ge \f12$. For $f \in L^p_{\su(\k)}(\VV_0^{d+1})$, $1 \le p\le \infty$, 
and $g\in L^1_{u(2\a_\k)}(\RR_+)$, 
$$
  \|f*g\|_{p, \k} \le \|f\|_{p,\k} \|g\|_{1, u(2\a_\k)}, \qquad 1 \le p \le \infty. 
$$
\end{thm}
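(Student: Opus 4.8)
The plan is to establish the inequality at the two endpoints $p=1$ and $p=\infty$ and then recover the full range $1<p<\infty$ by the Riesz--Thorin interpolation theorem, just as in the proof of Proposition~\ref{prop:sTbd}. Two facts drive the whole argument. The first is the elementary weight identity $\e^{s/2}\sw_\k(y,s)=\e^{-s/2}\su_\k(y,s)$, which follows from $\sw_\k=\su_\k\,\e^{-s}$ and lets me trade the Gaussian factor in $\sw_\k$ for the exponential appearing in the definition of $\|\cdot\|_{p,\k}$. The second, and the genuinely useful structural fact, is the symmetry of the translation kernel: since $\sT_{(x,t)}g(y,s)$ depends on the two points only through $t+s$ and through $\rho(x,t,y,s;u)$, both of which are invariant under interchanging $(x,t)$ and $(y,s)$, we have $\sT_{(x,t)}g(y,s)=\sT_{(y,s)}g(x,t)$. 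This identity is what allows the $L^1$ bound for $\sT$ from Proposition~\ref{prop:sTbd} to be applied after the roles of the two variables have been swapped.

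For $p=\infty$ I would begin from the pointwise bound $|f(y,s)|\le\|f\|_{\infty,\k}\,\e^{s/2}$ furnished by the definition of the norm, insert it into $f*g(x,t)$, and use $\e^{s/2}\sw_\k(y,s)=\e^{-s/2}\su_\k(y,s)$ to recognize the remaining integral as $\|\sT_{(x,t)}g\|_{1,\k}$. Proposition~\ref{prop:sTbd} with $p=1$ bounds this by $\e^{t/2}\|g\|_{1,u(2\a_\k)}$, and the factor $\e^{-t/2}$ built into $\|\cdot\|_{\infty,\k}$ cancels the $\e^{t/2}$, giving $\|f*g\|_{\infty,\k}\le\|f\|_{\infty,\k}\|g\|_{1,u(2\a_\k)}$.

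For $p=1$ I would expand $\|f*g\|_{1,\k}$, bound $|f*g(x,t)|$ by the integral of $|f(y,s)|\,|\sT_{(x,t)}g(y,s)|$ against $\sw_\k(y,s)$, and interchange the order of integration by Tonelli, which is legitimate because all integrands are nonnegative. The inner integral then reads $\ssb_\k\int_{\VV_0^{d+1}}|\sT_{(x,t)}g(y,s)|\,\e^{-t/2}\su_\k(x,t)\,\d\s(x,t)$; invoking the symmetry $\sT_{(x,t)}g(y,s)=\sT_{(y,s)}g(x,t)$ identifies this with $\|\sT_{(y,s)}g\|_{1,\k}$, which Proposition~\ref{prop:sTbd} bounds by $\e^{s/2}\|g\|_{1,u(2\a_\k)}$. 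The surviving factor $\e^{s/2}\sw_\k(y,s)=\e^{-s/2}\su_\k(y,s)$ recombines the outer integral into $\|f\|_{1,\k}$, yielding $\|f*g\|_{1,\k}\le\|f\|_{1,\k}\|g\|_{1,u(2\a_\k)}$.

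The point to get right is the symmetry of the kernel together with the bookkeeping of the weights: the several uses of $\e^{\pm s/2}\sw_\k=\e^{\mp s/2}\su_\k$ must be tracked so that the constant on the right is exactly $\|g\|_{1,u(2\a_\k)}$, with no stray exponential left over. No estimate beyond the $L^1$ boundedness of $\sT_{(x,t)}$ is required, so the hypothesis $\a_\k\ge\tfrac12$ enters only through Proposition~\ref{prop:sTbd}; the difficulty here is structural rather than analytic.
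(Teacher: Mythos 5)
Your proposal is correct and follows essentially the same route as the paper's proof: both establish the endpoint cases $p=1$ (via Tonelli, the symmetry $\sT_{(x,t)}g(y,s)=\sT_{(y,s)}g(x,t)$, and the $L^1$ bound of Proposition~\ref{prop:sTbd}) and $p=\infty$ (via the pointwise bound $|f(y,s)|\le\|f\|_{\infty,\k}\e^{s/2}$ and the same $L^1$ translation bound), and then conclude by Riesz--Thorin interpolation. Your explicit justification of the kernel symmetry and the weight identity $\e^{s/2}\sw_\k=\e^{-s/2}\su_\k$ merely spells out what the paper calls ``the obvious symmetry'' and leaves implicit in its computation.
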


\begin{proof} 
If $ p =1$, then using the Proposition \ref{prop:sTbd} and the obvious symmetry of $\sT_{(x,t)} (y,s)$ in $(x,t)$ and $(y,s)$, 
we obtain
\begin{align*}
  \|f*g\|_{1,\k} \,& = \ssb_\k \int_{\VV_0^{d+1}} \left| \ssb_\k \int_{\VV_0^{d+1}} f(y,s) \sT_{(x,t)}g(y,s)
        \sw_\k(y,s)\d \s(y,s)\e^{-t/2} \right| \su_\k(x,t) \d \s(x,t) \\
     & \le \ssb_\k \int_{\VV_0^{d+1}} | f(y,s)| \cdot \|\sT_{(y,s)}g \|_{\k,1}\sw_\k(y,s)\d \s(y,s)\\
     &  \le  \|g\|_{1,u(2\a_\k)}  \ssb_\k \int_{\VV_0^{d+1}} | f(y,s)| e^{s /2}  \sw_\k(y,s) \d \s(y,s)  \\
     & = \|f\|_{1,\k} \|g\|_{1, u(2\a_\k)}, 
\end{align*}
which proves the stated inequality for $p=1$. Furthermore, for $p = \infty$, 
\begin{align*}
  \|f*g\|_{\infty,\k} \,& = \sup_{(x,t) \in \VV_0^{d+1}} \left | f*g(x,t) \e^{-t/2} \right |  \\
   & \le \|f\|_{\infty, \k} \sup_{(x,t) \in \VV_0^{d+1}} \ssb_\k \int_{\VV_0^{d+1}} 
       \left| \sT_{(x,t)}g(y,s)\right| \e^{s/2} \sw_\k(y,s) \d \s(y,s) \e^{-t/2}  \\
   &= \|f\|_{\infty, \k} \sup_{(x,t) \in \VV_0^{d+1}} \| \sT_{(x,t)}g\|_{\k,1} \e^{-t/2} 
     \le \|f\|_{\infty,\k} \|g\|_{1, u(2\a_\k)}, 
\end{align*}
which is the stated inequality for $p = \infty$. The case $1 < p < \infty$ follows from the Riesz-Thorin 
interpolation theorem. 
\end{proof}

A straight forward generalization of the theorem is the Young's inequality. 

\begin{cor}
Let $d \ge 2$ and $\a_\k \ge \f12$. For $1 \le p, q, r \le \infty$ with $r^{-1} = p^{-1}+q^{-1} -1$, if
$f\in L^p_{\su(\k)}(\VV_0^{d+1})$ and $g \in L^q_{u(2\a_\k)}(\RR_+)$, then $f*g \in L^r_{\su(\k)}(\VV_0^{d+1})$ and
$$
  \|f*g\|_{r, \k} \le \|f\|_{p,\k} \|g\|_{q, u(2\a_\k)}.
$$ 
\end{cor}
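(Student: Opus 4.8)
The plan is to derive Young's inequality from two endpoint estimates by fixing $f$ and interpolating the linear map $g\mapsto f*g$; one endpoint is Theorem~\ref{thm:f*g-bdd} itself, and the other is a new $L^\infty$ bound obtained from H\"older's inequality together with Proposition~\ref{prop:sTbd}.

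First I would record the two endpoints. Theorem~\ref{thm:f*g-bdd}, read with $q=1$, already gives
$$
  \|f*g\|_{p,\k}\le \|f\|_{p,\k}\,\|g\|_{1,u(2\a_\k)},
$$
which is the case $r=p$, $q=1$ of the corollary and will serve as the first endpoint. For the second endpoint I would establish, with $\f1p+\f1{p'}=1$,
$$
  \|f*g\|_{\infty,\k}\le \|f\|_{p,\k}\,\|g\|_{p',u(2\a_\k)},
$$
the case $r=\infty$. To prove it, start from Definition~\ref{def:f*g}, write $\sw_\k(y,s)=\su(\k)(y,s)\e^{-s}$ and split $\e^{-s}=\e^{-s/2}\e^{-s/2}$, so that the integrand becomes $|f(y,s)\e^{-s/2}|\cdot|\sT_{(x,t)}g(y,s)\e^{-s/2}|\,\su_\k(y,s)$. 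Applying H\"older's inequality in $(y,s)$ with exponents $p$ and $p'$ against the measure $\ssb_\k\,\su_\k\,\d\s$ yields
$$
  |f*g(x,t)|\le \|f\|_{p,\k}\,\|\sT_{(x,t)}g\|_{p',\k},
$$
and Proposition~\ref{prop:sTbd}, valid for every exponent in $[1,\infty]$ because $\a_\k\ge\f12$, bounds $\|\sT_{(x,t)}g\|_{p',\k}\le \e^{t/2}\|g\|_{p',u(2\a_\k)}$. Multiplying by $\e^{-t/2}$ and taking the supremum over $(x,t)\in\VV_0^{d+1}$ gives the claimed $L^\infty$ endpoint.

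With both endpoints in hand, fix $f\in L^p_{\su(\k)}(\VV_0^{d+1})$ and consider the linear operator $S_f\colon g\mapsto f*g$. The first endpoint says $S_f$ maps $L^1_{u(2\a_\k)}(\RR_+)$ into $L^p_{\su(\k)}(\VV_0^{d+1})$ with norm at most $\|f\|_{p,\k}$, and the second says $S_f$ maps $L^{p'}_{u(2\a_\k)}(\RR_+)$ into $L^\infty_{\su(\k)}(\VV_0^{d+1})$ with the same bound. Since all the spaces in play are weighted $L^q$ spaces over $\sigma$-finite measures, the Riesz--Thorin interpolation theorem applies and shows that $S_f$ maps $L^q_{u(2\a_\k)}(\RR_+)$ into $L^r_{\su(\k)}(\VV_0^{d+1})$ with norm at most $\|f\|_{p,\k}$, where $\f1q=(1-\t)+\f{\t}{p'}$ and $\f1r=\f{1-\t}{p}$ for $\t\in[0,1]$. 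Eliminating $\t$ from these two relations reproduces exactly $\f1r=\f1p+\f1q-1$, the hypothesis of the corollary, which finishes the argument.

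The proof is essentially routine once the two endpoints are fixed; the only genuinely new ingredient is the $L^\infty$ endpoint, and the step requiring care there is that Proposition~\ref{prop:sTbd} must be invoked at the conjugate exponent $p'$. This is precisely why the hypothesis $\a_\k\ge\f12$ is imposed: it is the range in which that proposition holds for every exponent, including $p'$. The remaining work is purely bookkeeping, namely checking that the Riesz--Thorin exponents collapse to the relation $r^{-1}=p^{-1}+q^{-1}-1$, which follows by eliminating the interpolation parameter $\t$.
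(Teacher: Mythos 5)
Your proposal is correct, and it fills in an argument the paper itself never writes down: the corollary is stated immediately after the sentence ``A straight forward generalization of the theorem is the Young's inequality,'' with no proof given. Your route is the natural completion of the paper's own scheme, since the paper proves Theorem~\ref{thm:f*g-bdd} exactly by establishing the endpoints $p=1$ and $p=\infty$ and invoking Riesz--Thorin; you extend this by fixing $f$, keeping Theorem~\ref{thm:f*g-bdd} as the $(L^1\to L^p)$ endpoint, and manufacturing the dual $(L^{p'}\to L^\infty)$ endpoint via H\"older against the measure $\ssb_\k\,\su_\k\,\d\s$ followed by Proposition~\ref{prop:sTbd} at the exponent $p'$. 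Both steps are sound: the splitting $\e^{-s}=\e^{-s/2}\e^{-s/2}$ makes the H\"older estimate produce exactly the norms $\|f\|_{p,\k}$ and $\|\sT_{(x,t)}g\|_{p',\k}$ as the paper defines them, the factor $\e^{t/2}$ from Proposition~\ref{prop:sTbd} is cancelled by the $\e^{-t/2}$ in the definition of $\|\cdot\|_{\infty,\k}$, and your exponent bookkeeping $\tfrac1q=(1-\t)+\tfrac{\t}{p'}$, $\tfrac1r=\tfrac{1-\t}{p}$ does collapse to $r^{-1}=p^{-1}+q^{-1}-1$, with $\t\in[0,1]$ sweeping out precisely the admissible range $1\le q\le p'$. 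One small caveat on your closing commentary: the hypothesis $\a_\k\ge\f12$ is not needed only for the $L^\infty$ endpoint; it is already required for the $q=1$ endpoint, since Theorem~\ref{thm:f*g-bdd} (through Proposition~\ref{prop:sTbd}) is proved only in that range. This does not affect the validity of the proof.
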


For the case $\a_\k \ge \f12$, the boundedness of $f*g$ is more restricted. We need to introduce another 
space $L^p_{\su(\k),*}(\VV_0^{d+1})$ that has the norm 
$$
  \|f\|_{p,\k}^* = \left( \ssb_\k \int_{\VV_0^{d+1}} |f(x,t)e^{-t/2}|^p  \sqrt{t} \,\su_\k(x,t)\d \s (x,t) \right)^{\f1p}, 
     \quad 1 \le p < \infty,
$$
which has an additional $\sqrt{t}$ in the integral in comparing with $\|f\|_{p,\k}$, and also
$$
  \|f\|_{\infty, \k}^* =  \esssup \left \{|f(x,t)| \sqrt{t} \e^{- t/2}: (x,t) \in \VV_0^{d+1} \right\}.
$$

\begin{thm}\label{thm:f*g-bdd2}
Let $d \ge 2$ and $\a_\k = |\k|+\f{d-2}2 \le \f12$. For $f \in L^1_{\su(\k)} (\VV_0^{d+1}) \cap L^1_{\su(\k),*}(\VV_0^{d+1})$ 
and $g\in L^1_{u(2\a_\k)}(\RR_+)\cap L^1_{u(2\a_\k+\f12)}(\RR_+)$, 
$$
  \|f*g\|_{1, \k} \le \|f\|_{1,\k} \|g\|_{1, u(2 \a_\k)}+  \|f\|_{1, \k}^* \|g\|_{1, u(2 \a_\k+\f12)}.
$$
\end{thm}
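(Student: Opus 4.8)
The plan is to run the $p=1$ argument of Theorem~\ref{thm:f*g-bdd} essentially verbatim, substituting the weaker two-term translation estimate of Proposition~\ref{prop:sTbd2} (valid in the range $\a_\k<\f12$) for the single-term estimate of Proposition~\ref{prop:sTbd}. First I would expand $\|f*g\|_{1,\k}$ using Definition~\ref{def:f*g}, bring the absolute value inside both integrals, and apply Tonelli (legitimate since the integrand is now nonnegative). The crucial structural input is the symmetry $\sT_{(x,t)}g(y,s)=\sT_{(y,s)}g(x,t)$, which holds because the kernel in Definition~\ref{def:sT-transl} depends on the two points only through the symmetric quantities $t+s$ and $\rho(x,t,y,s;u)$ from \eqref{eq:rho}. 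Interchanging the roles of $(x,t)$ and $(y,s)$ then recognizes the inner $(x,t)$-integral as the norm $\|\sT_{(y,s)}g\|_{1,\k}$, giving
$$
   \|f*g\|_{1,\k} \le \ssb_\k \int_{\VV_0^{d+1}} |f(y,s)|\,\|\sT_{(y,s)}g\|_{1,\k}\,\sw_\k(y,s)\,\d\s(y,s).
$$

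Next I would insert the bound of Proposition~\ref{prop:sTbd2}, namely
$$
   \|\sT_{(y,s)}g\|_{1,\k}\le \e^{s/2}\|g\|_{1,u(2\a_\k)}+\tfrac{1}{(2\a_\k+1)\sqrt{\pi}}\sqrt{s}\,\e^{s/2}\|g\|_{1,u(2\a_\k+\f12)},
$$
and split the integral into two pieces. The only step demanding care is the weight bookkeeping, which rests on the identity $\sw_\k(y,s)=\e^{-s}\su_\k(y,s)$: this gives $\e^{s/2}\sw_\k(y,s)=\e^{-s/2}\su_\k(y,s)$ and $\sqrt{s}\,\e^{s/2}\sw_\k(y,s)=\sqrt{s}\,\e^{-s/2}\su_\k(y,s)$. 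With these, the first piece is exactly $\|g\|_{1,u(2\a_\k)}\,\|f\|_{1,\k}$, while the second is $\tfrac{1}{(2\a_\k+1)\sqrt{\pi}}\|g\|_{1,u(2\a_\k+\f12)}\,\|f\|_{1,\k}^*$, the extra factor $\sqrt{s}$ being precisely what promotes $\|f\|_{1,\k}$ to its starred version $\|f\|_{1,\k}^*$.

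Finally I would discard the numerical constant: since $(2\a_\k+1)\sqrt{\pi}>1$ for $\a_\k\ge0$, we have $\tfrac{1}{(2\a_\k+1)\sqrt{\pi}}\le1$, and dropping it only weakens the estimate, yielding the stated inequality. The endpoint $\a_\k=\f12$ lies outside the scope of Proposition~\ref{prop:sTbd2}, but there Proposition~\ref{prop:sTbd} applies and furnishes the strictly stronger bound consisting of the first term alone, so the conclusion persists across the whole range $\a_\k\le\f12$. I do not anticipate any genuine difficulty in this argument; once Proposition~\ref{prop:sTbd2} is established the proof is routine, and the substantive work has already been done in bounding the translation operator.
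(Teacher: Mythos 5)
Your proposal is correct and takes essentially the same route as the paper: the paper's proof likewise runs the $p=1$ argument of Theorem~\ref{thm:f*g-bdd} with the two-term translation bound of Proposition~\ref{prop:sTbd2} substituted for the one-term bound, recognizes the extra $\sqrt{s}$ as producing $\|f\|_{1,\k}^*$, and then discards the constant $\frac{1}{(2\a_\k+1)\sqrt{\pi}}\le 1$. Your explicit handling of the endpoint $\a_\k=\f12$ via Proposition~\ref{prop:sTbd} is a detail the paper leaves implicit, but it does not change the substance of the argument.
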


\begin{proof}
The proof follows along the line of that Theorem \ref{thm:f*g-bdd} using Proposition \ref{prop:sTbd} instead. 
It follows then that 
\begin{align*}
  \|f*g\|_{1,\k} \, & \le   \ssb_\k \int_{\VV_0^{d+1}} |f(y,s)| \cdot \| \sT_{(x,t)} g(y,s)\|_{1,\k} \sw_\k(y,s) \d \s(y,s) \\
      &  \le  \|g\|_{1, u(2\a_\k)}   \ssb_\k  \int_{\VV_0^{d+1}} | f(y,s)|  \su_\k(y,s) \e^{-s/2}\d \s(y,s) \\
       & \qquad
        +\frac{1}{(2\a_k+1)\pi} \|g\|_{1, u(2\a_\k+\f12)}   \ssb_\k 
            \int_{\VV_0^{d+1}} | f(y,s)| \sqrt{s} \su_\k(y,s) \e^{-s/2}\d \s(y,s)  \\
      & = \|f\|_{1, \k} \|g\|_{1,u(2\a_k)} +\frac{1}{(2\a_k+1)\sqrt{\pi}} \|f\|_{1,\k}^* \|g\|_{1,u(2\a_k+\f12)}.  
 \end{align*}
Since $\a_\k \ge 0$, we can replace the constant by 1. This proves the stated inequality. 
\end{proof}

We restrict to the case $p=1$ in the above theorem. For $ p = \infty$, we need to modify the the norm in
the left-hand side. For example,  the following inequality
$$
 \left | f*g(x,t) \min\{1, t^{- \f12}\} \e^{-t/2} \right | \le \|f\|_{\infty,\k} \left (\|g\|_{\infty,u(2\a_k)} + \|g\|_{\infty,u(2\a_k+1)}\right).
$$
follows readily. We omit the details.

\subsection{Ces\`aro means on the conic surface}
For $\delta > -1$, the Ces\`aro means of the Fourier orthogonal expansions with respect to $\sw_\k$ is given by
\begin{align*}
  \sS_n^\delta(\sw_\k; f) \, & = \frac{1}{\binom{n+\delta}{n}} \sum_{k=0}^n  \binom{n-k+\delta}{n-k} \proj_k (\sw_\k; f) \\
       & = \ssb_\k \int_{\VV_0^{d+1}} f(y,s) \sK_n^\delta(\sw_\k; (x,t),(y,s)) \sw_\k(y,s) \d \s (y,s),
\end{align*}
where $\sK_n^\delta(\sw_\k)$ denote the $(C,\delta)$ kernel 
$$
\sK_n^\delta(\sw_\k;(x,t),(y,s)) =  \frac{1}{\binom{n+\delta}{n}} \sum_{k=0}^n  \binom{n-k+\delta}{n-k} 
   \sP_k(\sw_\k; (x,t),(y,s)).
$$
By the closed formula of the projection operator in \eqref{eq:f*Ln}, we can express the means in terms of the
puedo convolution. 

\begin{prop} \label{prop:CesaroKV0}
Let $d \ge 2$ and $\delta > -1$. Then 
\begin{equation}\label{eq:Snd-sTd}
  \sK_n^\delta \big(\sw_{\k}; (x,t), (y,s)\big) = \frac{1}{\binom{n+\delta}{n}} \sT_{(x,t)}  \left(L_n^{\delta+2|\k|+ d-1}\right) (y,s), 
\end{equation}
and
\begin{equation}\label{eq:Snd-convo}
  \sS_n^\delta (\sw_\k; f) =  \frac{1}{\binom{n+\delta}{n}} f* L_n^{\delta+2|\k|+d-1}.
\end{equation}
\end{prop}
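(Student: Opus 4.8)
The plan is to reduce both identities to a single classical fact about the Laguerre polynomials, namely that Ces\`aro summation of the sequence $\{L_k^\a\}_{k\ge 0}$ raises the parameter by $\delta+1$. Writing $\a = 2|\k|+d-2$ and $A_j^\delta = \binom{j+\delta}{j}$, the first thing I would record is the identity
\begin{equation}\label{eq:Lag-Cesaro}
  \sum_{k=0}^n A_{n-k}^\delta L_k^\a(t) = L_n^{\a+\delta+1}(t), \qquad n = 0,1,2,\ldots .
\end{equation}
This follows at once by comparing the coefficients of $r^n$ on the two sides of
\[
  \frac{1}{(1-r)^{\delta+1}} \sum_{k=0}^\infty L_k^\a(t) r^k = \sum_{n=0}^\infty L_n^{\a+\delta+1}(t) r^n ,
\]
which is exactly the generating-function identity already used in the proof of Theorem~\ref{thm:reprod-closed} (with $\b=\delta$), after expanding $(1-r)^{-\delta-1} = \sum_{j\ge 0} A_j^\delta r^j$. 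Equivalently, \eqref{eq:Lag-Cesaro} is the Ces\`aro relation \eqref{eq:Cesaro} applied termwise to $a_k = L_k^\a(t)$.

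For \eqref{eq:Snd-sTd}, I would start from the definition of the $(C,\delta)$ kernel and replace each reproducing kernel $\sP_k(\sw_\k; (x,t),\cdot)$ by $\sT_{(x,t)} L_k^{\a}$ via \eqref{eq:TLn=Pn}. Since $\sT_{(x,t)}$ is linear in its argument—its definition is an integral operator that is linear in $g$—the finite sum may be pulled inside the operator, giving
\[
  \binom{n+\delta}{n} \sK_n^\delta\big(\sw_\k; (x,t),(y,s)\big)
  = \sum_{k=0}^n A_{n-k}^\delta \sT_{(x,t)} L_k^{\a}(y,s)
  = \sT_{(x,t)}\Big( \sum_{k=0}^n A_{n-k}^\delta L_k^{\a}\Big)(y,s).
\]
Applying \eqref{eq:Lag-Cesaro} to the inner sum replaces it by $L_n^{\a+\delta+1} = L_n^{\delta+2|\k|+d-1}$, which is precisely the asserted formula \eqref{eq:Snd-sTd}.

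For \eqref{eq:Snd-convo} I would run the identical argument one level up, at the level of the convolution. By definition $\sS_n^\delta(\sw_\k; f) = \binom{n+\delta}{n}^{-1}\sum_{k=0}^n A_{n-k}^\delta \proj_k(\sw_\k; f)$, while \eqref{eq:f*Ln} gives $\proj_k(\sw_\k; f) = f* L_k^{\a}$. Because $f*g$ is linear in $g$ (the pseudo convolution is built from the linear operator $\sT_{(x,t)}$), the same manipulation together with \eqref{eq:Lag-Cesaro} yields
\[
  \binom{n+\delta}{n}\sS_n^\delta(\sw_\k; f)
  = f* \Big(\sum_{k=0}^n A_{n-k}^\delta L_k^{\a}\Big)
  = f* L_n^{\delta+2|\k|+d-1}.
\]
Alternatively, \eqref{eq:Snd-convo} can be read off directly from \eqref{eq:Snd-sTd} by inserting the kernel formula into the integral representation of $\sS_n^\delta(\sw_\k;f)$ and recognizing the result as $\ssb_\k\int f(y,s)\sT_{(x,t)}L_n^{\delta+2|\k|+d-1}(y,s)\sw_\k(y,s)\d\s(y,s)$, i.e.\ $\binom{n+\delta}{n}^{-1} f*L_n^{\delta+2|\k|+d-1}$. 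I do not expect any genuine obstacle: the only points requiring care are the interchange of the finite Ces\`aro sum with the linear operators $\sT_{(x,t)}$ and $f* \cdot$, which is justified purely by linearity and finiteness, and the bookkeeping $\a+\delta+1 = \delta+2|\k|+d-1$. The entire content of the proposition is the Laguerre identity \eqref{eq:Lag-Cesaro}, which is the analytic reason the Ces\`aro means of the expansion are again governed by a single higher-order Laguerre polynomial.
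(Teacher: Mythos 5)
Your proposal is correct and matches the paper's own proof: the paper likewise defines the Ces\`aro combination $\tau_n^\delta$ of the polynomials $L_k^{2|\k|+d-2}$, evaluates it as $\binom{n+\delta}{n}^{-1}L_n^{\delta+2|\k|+d-1}$ by the same generating-function comparison (combining \eqref{eq:Cesaro} with \eqref{eq:generatingL}), and then invokes \eqref{eq:TLn=Pn} together with linearity of $\sT_{(x,t)}$ and of the convolution. Your identity \eqref{eq:Lag-Cesaro} is exactly the paper's computation of $\tau_n^\delta$, so there is nothing to add.
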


\begin{proof}
By \eqref{eq:TLn=Pn}, we see that 
$$
  \sK_n(\sw_\k; (x,t),\cdot) = \sT_{(x,t)} \tau_n^\delta \quad\hbox{with}\quad \tau_n^\delta =  \frac{1}{\binom{n+\delta}{n}}
        \sum_{k=0}^n  \binom{n-k+\delta}{n-k}L_k^{2|\k|+d-2}.
$$
From the identity \eqref{eq:Cesaro} and the generating function \eqref{eq:generatingL}, it follows that 
\begin{align*}
   \sum_{n=0}^\infty \binom{n+\delta}{n} \tau_n^\delta(z) \, & = \frac{1}{(1-r)^{\delta+1}} \sum_{n=0}^\infty 
       L_n^{2|\k|+d-2}(z) r^n \\ 
       & = \frac{1}{(1-r)^{\delta+2|\k|+d}} \e^{ - \frac{zr}{1-r}}  = \sum_{n=0}^\infty  L_n^{\delta+ 2|\k|+d-1}(z) r^n. 
\end{align*}
Hence, comparing the coefficients of $r^n$, we obtain
$$
     \tau_n^\delta = \frac{1}{\binom{n+\delta}{n}} L_n^{\delta+2|\k|+d-1},
$$
which proves \eqref{eq:Snd-sTd} and, consequently, \eqref{eq:Snd-convo}. 
\end{proof}

For $1 \le p \le \infty$, let $\|\sS_n^\delta(\sw_\k)\|_{p, \k}$ denote the operator norm of $\sS_n^\delta$ defined by
$$
 \|\sS_n^\delta(\sw_\k)\|_{p, \k} =  \sup \left \{\| \sS_n^\delta (\sw_\k; f) \|_{p,\k}:  f\in L^p_{\su(\k)}(\VV_0^{d+1})\right \}.
$$

\begin{thm} \label{thm:CesaroV0}
Let $d \ge 2$ and $\delta > -1$. If $\a_\k = |\k| + \f{d-2}{2} \ge \f12$, then for $p =1$ and $p = \infty$,  
\begin{equation} \label{eq:sSn-norm}
  \|\sS_n^\delta (\sw_\k)\|_{p, \k} =\frac{1}{\binom{n+\delta}{n}} \|L_n^{\delta + 2 \a_k+1}\|_{1, u(2\a_\k)}
      \sim \begin{cases} n^{2\a_\k + \f12 - \delta} &  -1< \delta < 2\a_\k +\f12 \\
       \log n & \delta = 2\a_\k +\f12 \\
        1  & \delta > 2\a_\k +\f12.  \end{cases}
\end{equation}
Moreover, $\sS_n^\delta f$ converse to $f$ in $L^p_{\su(\k)}(\VV_0^{d+1})$, $1\le p \le \infty$, if 
$\delta > 2\a_\k +\f12 = 2|\k|+d - \f32$ and the condition is sharp if $p =1$ and $p = \infty$. 
\end{thm}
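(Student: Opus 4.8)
The plan is to reduce everything to the one-dimensional Laguerre Cesàro operator through the convolution identity \eqref{eq:Snd-convo}. By Proposition~\ref{prop:CesaroKV0}, $\sS_n^\delta(\sw_\k; f) = \frac{1}{\binom{n+\delta}{n}} f * L_n^{\delta+2\a_\k+1}$, where $\delta+2|\k|+d-1 = \delta+2\a_\k+1$. Since $\a_\k\ge\f12$, applying \thmref{thm:f*g-bdd} with $g=L_n^{\delta+2\a_\k+1}$ gives at once, for $p=1$ and $p=\infty$, the upper bound $\|\sS_n^\delta(\sw_\k)\|_{p,\k}\le \frac{1}{\binom{n+\delta}{n}}\|L_n^{\delta+2\a_\k+1}\|_{1,u(2\a_\k)}$. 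For the matching lower bound I would test on functions of $t$ alone: if $f(x,t)=f_0(t)$ then, by linearity of the projections and the radial-reduction Proposition of the previous section, $\sS_n^\delta(\sw_\k;f)$ reduces to the classical mean $s_n^\delta(w_{2\a_\k};f_0)$, while $\|f\|_{p,\k}=\|f_0\|_{p,u(2\a_\k)}$ because $\ssb_\k\int_{\VV_0^{d+1}}F(t)\su_\k\,\d\s(x,t)=b_{2\a_\k}\int_0^\infty F(t)t^{2\a_\k}\,\d t$ (using $c_\k^h\int_{\sph}h_\k^2\,\d\s_\SS=1$). Concretely this is the apex degeneration: at $(x,t)=(0,0)$ one has $\rho(0,0,y,s;u)=0$, so $\sT_{(0,0)}g(y,s)=g(s)$ and the estimate of Proposition~\ref{prop:sTbd} is attained, forcing equality in \eqref{eq:sSn-norm}.

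Next I would extract the asymptotic order from \lemref{lem:estLn}. Writing the kernel norm as $b_{2\a_\k}\int_0^\infty|L_n^{\delta+2\a_\k+1}(t)|\,t^{2\a_\k}\e^{-t/2}\,\d t$, I would apply the lemma with $\a=4\a_\k$ and $\b=\delta-2\a_\k+1$, so that $\a+\b=\delta+2\a_\k+1$ is the Laguerre index and the weight $t^{\a/2}=t^{2\a_\k}$ is the correct one; the fixed prefactor $b_{2\a_\k}$ versus the lemma's $b_{4\a_\k}$ only changes the implied constant. The three cases $\b<\f32$, $\b=\f32$, $\b>\f32$ are precisely $\delta<2\a_\k+\f12$, $\delta=2\a_\k+\f12$, $\delta>2\a_\k+\f12$, and give $\|L_n^{\delta+2\a_\k+1}\|_{1,u(2\a_\k)}\sim n^{2\a_\k+\f12}$, $n^{2\a_\k+\f12}\log n$, $n^\delta$ respectively. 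Dividing by $\binom{n+\delta}{n}\sim n^\delta$ produces the three regimes of \eqref{eq:sSn-norm}.

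Finally I would settle convergence and sharpness. For $\delta>2\a_\k+\f12$ the norms are uniformly bounded for $p=1,\infty$, hence for all $1\le p\le\infty$ by the Riesz-Thorin theorem. Since $\binom{n-k+\delta}{n-k}/\binom{n+\delta}{n}\to1$ for each fixed $k$, one gets $\sS_n^\delta f\to f$ on the dense subspace of polynomials, and uniform boundedness propagates this to all of $L^p_{\su(\k)}(\VV_0^{d+1})$ for $1\le p<\infty$ (and to the closure of the polynomials for $p=\infty$). Conversely, for $p=1,\infty$ and $\delta\le2\a_\k+\f12$ the norms grow like $n^{2\a_\k+\f12-\delta}$ or $\log n$, so the Banach-Steinhaus theorem produces an $f$ for which $\sS_n^\delta f$ fails to converge; this is the sharpness of the threshold $\delta=2\a_\k+\f12=2|\k|+d-\f32$.

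The main obstacle is the lower bound that upgrades the inequality from \thmref{thm:f*g-bdd} to the exact equality in \eqref{eq:sSn-norm}: one must exhibit a direction along which the pseudo convolution loses nothing, which is exactly the apex collapse $\sT_{(0,0)}g=g$ (equivalently, the restriction to radial data, where the operator becomes the classical, sharply-normed one-dimensional Cesàro mean). A secondary but easily-missed point is the indexing in \lemref{lem:estLn}: its weight is $t^{\a/2}$ whereas $\|\cdot\|_{1,u(2\a_\k)}$ carries $t^{2\a_\k}$, so one must take $\a=4\a_\k$ rather than $2\a_\k$, after which the case thresholds in $\b$ line up with those in $\delta$.
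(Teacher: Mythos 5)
Your proposal is correct and follows essentially the same route as the paper: the upper bound via the convolution identity \eqref{eq:Snd-convo} together with Theorem~\ref{thm:f*g-bdd}, the matching lower bound by evaluating the Ces\`aro kernel at the apex $(0,0)$ using \eqref{eq:PnLV0_x=0}, the asymptotics from Lemma~\ref{lem:estLn} with $\a=4\a_\k$, $\b=\delta-2\a_\k+1$, and the standard density/Banach--Steinhaus arguments for convergence and sharpness. Your radial-reduction framing of the lower bound is just a repackaging of the same apex evaluation (the one-dimensional lower bound is itself proved by setting the argument to $0$), so it does not constitute a genuinely different method.
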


\begin{proof}
Let $\a_\k = |\k| + \f{d-2}{2}$. We need to consider two cases $\a_\k > 0$ and $\a_\k =0$ separately.  
Since $\sS_n^\delta(\sw_{\k}; f)$ is an integral operator, a standard argument shows that,
\begin{equation}\label{eq:SnL1norm}
 \|\sS_n^\delta(\sw_{\k}) \|_{p, \k}  = 
  \sup_{(x,t) \in \VV_0^{d+1}}  \int_{\VV_0^{d+1}} \left| \sK_n^\delta \big(\sw_\k; (x,t), (y,s)\big)\right| 
      \e^{s/2} \sw_\k(y,s)\d \s(y,s)  
\end{equation}
for $p=1$ and $p= \infty$. By the estimate in Proposition \ref{prop:sTbd} or Theorem \ref{thm:f*g-bdd}, we
have
$$
 \|\sS_n^\delta(\sw_\k) \|_{p, \k} \le \frac{1}{\binom{n+\delta}{n}} \|L_n^{\delta + 2 \a_k +1}\|_{1, u(2\a_\k)}. 
$$
Furthermore, by \eqref{eq:PnLV0_x=0}, we see that $\sK_n^\delta \big(\sw_{\k}; (0,0), (y,s)\big)
 =\frac{1}{\binom{n+\delta}{n}}  L_n^{\delta+ 2 \a_k +1}(t)$,
so that 
\begin{align*}
 \|\sS_n^\delta(\sw_\k) \|_{p, \k} \, & \ge \ssb_\k
     \int_{\VV_0^{d+1}} \left| \sK_n^\delta \big(\sw_\k; (0,0), (y,s)\big)\right| \sw_\k(y,s) \e^{s/2}\d \s(y,s)\\
   & = \frac{1}{\binom{n+\delta}{n}}  \int_0^\infty \left| L_n^{\delta+ 2 \a_k +1}(t)\right| t^{2\a_\k} \e^{-t/2}  \d t.
\end{align*}
Together, we have proved the equality in \eqref{eq:sSn-norm}, whereas the asymptotic in \eqref{eq:sSn-norm}
follows form Lemma \ref{lem:estLn} with $\a = 4\a_k$ and $\b = \delta - 2\a_\k+1$ and 
$\binom{n+\delta}{n} \sim n^\delta$. Consequently, it follows that $ \|\sS_n^\delta f(\sw_{\k}) \|_{p, \k}$
is bounded if $\delta > 2\a_\k +\f12$ and it is unbounded if $\delta \le 2\a_\k +\f12$. This completes the proof.
\end{proof}

It is evident that the lower bound of $\|\sS_n^\delta(\sw_\k)\|_{p,\sw_{\k}}$ in \eqref{eq:SnL1norm} remains
valid when $\a_\k < \f12$. However, the bound in Propostion \ref{prop:sTbd2} or Theorem \ref{thm:f*g-bdd2} 
does not lead to a matching upper bound.  

\begin{thm} \label{thm:Cesaro2}
Let $d \ge 2$ and $\a_\k\le \f12$. Then $\sS_n^\delta(\sw_0; f)$ converse to $f$ in $L^1_{\su(\k)}(\VV_0^{d+1})\cap
L^1_{\su(\k),*}(\VV_0^{d+1})$ if $\delta > 2 \a_\k+1$. Moreover, $\sS_n^\delta(\sw_0; f)$ does
not converge for all $f \in L^1_{\su(\k)}(\VV_0^{d+1})$ if $\delta \le 2\a_\k+\f12$. 
\end{thm}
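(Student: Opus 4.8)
The plan is to follow the architecture of the proof of Theorem~\ref{thm:CesaroV0} almost verbatim, with Proposition~\ref{prop:sTbd} replaced by its small-$\a_\k$ counterpart Proposition~\ref{prop:sTbd2} (equivalently Theorem~\ref{thm:f*g-bdd2}), and then to track how the extra $\sqrt{t}$-weighted term in that estimate shifts the convergence threshold. As before, everything reduces to estimating weighted $L^1$ norms of a single Laguerre polynomial, since Proposition~\ref{prop:CesaroKV0} identifies the means with the pseudo convolution.

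For the convergence half, I would begin from
$$
 \sS_n^\delta(\sw_\k; f) = \frac{1}{\binom{n+\delta}{n}}\, f * L_n^{\delta+2\a_\k+1}
$$
and insert $g=L_n^{\delta+2\a_\k+1}$ into Theorem~\ref{thm:f*g-bdd2}, which yields
$$
 \|\sS_n^\delta(\sw_\k; f)\|_{1,\k} \le \frac{\|L_n^{\delta+2\a_\k+1}\|_{1,u(2\a_\k)}}{\binom{n+\delta}{n}}\,\|f\|_{1,\k} + \frac{\|L_n^{\delta+2\a_\k+1}\|_{1,u(2\a_\k+\f12)}}{\binom{n+\delta}{n}}\,\|f\|_{1,\k}^*.
$$
The two Laguerre norms are then controlled by Lemma~\ref{lem:estLn}, applied with $\a=4\a_\k,\ \b=\delta-2\a_\k+1$ in the first and with $\a=4\a_\k+1,\ \b=\delta-2\a_\k$ in the second, giving
$$
 \|L_n^{\delta+2\a_\k+1}\|_{1,u(2\a_\k)} \sim n^{\max\{\delta,\,2\a_\k+\f12\}}, \qquad \|L_n^{\delta+2\a_\k+1}\|_{1,u(2\a_\k+\f12)} \sim n^{\max\{\delta-\f12,\,2\a_\k+1\}}
$$
up to a logarithmic factor at the breakpoints. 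Dividing by $\binom{n+\delta}{n}\sim n^\delta$, the first coefficient stays bounded once $\delta\ge 2\a_\k+\f12$ and the second once $\delta\ge 2\a_\k+1$; hence for $\delta>2\a_\k+1$ we obtain $\sup_n\|\sS_n^\delta\|<\infty$ as operators from $L^1_{\su(\k)}\cap L^1_{\su(\k),*}$ into $L^1_{\su(\k)}$. The passage from uniform boundedness to convergence is the standard Banach--Steinhaus argument: on any polynomial $f$ the ratios $\binom{n-k+\delta}{n-k}/\binom{n+\delta}{n}\to1$ for each fixed $k$, so $\sS_n^\delta(\sw_\k;f)\to f$, and since the polynomials are dense in the intersection space the uniform bound upgrades this to convergence for every $f$ there.

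For the divergence half I would use the exact operator-norm identity \eqref{eq:SnL1norm} for $p=1$ and bound it below by evaluating the kernel at the apex $(x,t)=(0,0)$, the reduction of the resulting integral being identical to the one carried out in Theorem~\ref{thm:CesaroV0} via \eqref{eq:intV0}. By \eqref{eq:PnLV0_x=0} and the Ces\`aro summation identity established inside the proof of Proposition~\ref{prop:CesaroKV0}, one has $\sK_n^\delta(\sw_\k;(0,0),(y,s)) = \binom{n+\delta}{n}^{-1}L_n^{\delta+2\a_\k+1}(s)$, so that
$$
 \|\sS_n^\delta(\sw_\k)\|_{1,\k} \ge \frac{1}{\binom{n+\delta}{n}}\,\|L_n^{\delta+2\a_\k+1}\|_{1,u(2\a_\k)} \sim n^{2\a_\k+\f12-\delta}
$$
(with an extra $\log n$ when $\delta=2\a_\k+\f12$), using the first case of Lemma~\ref{lem:estLn}. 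Thus the operator norms on $L^1_{\su(\k)}(\VV_0^{d+1})$ are unbounded for $\delta\le 2\a_\k+\f12$, and the uniform boundedness principle then produces an $f$ whose Ces\`aro means fail to converge, which is the asserted sharpness.

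The main obstacle here is structural rather than computational. Unlike the regime $\a_\k\ge\f12$, where Proposition~\ref{prop:sTbd} bounds the translation in a single norm and the divergence and convergence indices meet exactly at $2\a_\k+\f12$, the estimate available for $\a_\k<\f12$ inevitably carries the second term measured in the stronger norm $\|\cdot\|_{1,\k}^*$. That term contributes order $n^{2\a_\k+1-\delta}$ in the critical range and is precisely what forces the guaranteed convergence index up to $2\a_\k+1$. Consequently the theorem leaves a genuine gap $(2\a_\k+\f12,\,2\a_\k+1]$ in which neither convergence nor divergence is decided by these tools; closing it would require a pointwise estimate of the translated kernel sharper than the one behind Lemma~\ref{lem:j-bound} and Proposition~\ref{prop:sTbd2}, and I do not attempt it.
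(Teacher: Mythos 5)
Your proposal is correct and follows essentially the same route as the paper: the convergence half is exactly the paper's argument (Proposition~\ref{prop:CesaroKV0} plus Theorem~\ref{thm:f*g-bdd2}, then Lemma~\ref{lem:estLn} with $\a=4\a_\k$, $\b=\delta-2\a_\k+1$ and $\a=4\a_\k+1$, $\b=\delta-2\a_\k$), and the divergence half is the apex lower bound via \eqref{eq:SnL1norm} and \eqref{eq:PnLV0_x=0}, which the paper handles by remarking before the theorem that the lower bound from Theorem~\ref{thm:CesaroV0} remains valid for $\a_\k<\f12$. You merely make explicit two steps the paper leaves implicit (the Banach--Steinhaus/density upgrade from uniform boundedness to convergence, and the spelled-out divergence argument), which is fine.
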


\begin{proof}
Using  Theorem \ref{thm:f*g-bdd2}, we obtain that 
$$
 \|\sS_n^\delta(\sw_{0}; f) \|_{1, \k} \le \frac{1}{\binom{n+\delta}{n}} 
         \left(\left \|L_n^{\delta+2\a_\k+ 1}\right \|_{1, u(2\a_\k)}  \|f\|_{1, \k} +  
           \left \|L_n^{\delta+2\a_\k+ 1}\right \|_{1,u(2\a_\k+\f12)}   \|f\|_{1,\k}^* \right).
$$ 
The term $\frac{1}{\binom{n+\delta}{n}} \|L_n^{\delta+2\a_\k+ 1}\|_{1, u(2\a_\k)}$ is bounded as shown in the 
case of $\a_\k \ge \f12$. By Lemma \ref{lem:estLn} with $\a = 4\a_k+1$ and $\b = \delta - 2 \a_k1$, we obtain
$$
\frac{1}{\binom{n+\delta}{n}} \|L_n^{\delta +1}\|_{1, u(2\a_k+\f12)}
      \sim \begin{cases} n^{2\a_\k+1 - \delta} &  \delta < 2 \a_k + \f32 \\
       n^{2\a_\k+1 - \delta} \log n & \delta = 2 \a_k + \f32\\
        n^{-\f12}  & \delta > 2 \a_k + \f32,  \end{cases}
$$
which is bounded if $\delta > 2 \a_k+1$. This proves the convergence part. 
\end{proof} 

If $\k = 0$, then our result holds for $\sw_0(x,t) = t^{-1}\e^{-t}$. In this case, our result shows that
the $(C,\delta)$ means converge in $L^p_{\su(\k)}(\VV_0^{d+1})$, $p=1$ and $\infty$ if and only if 
$\delta > d - \f32$ when $d \ge 3$. For $ d =2$, the condition $\delta > \f12$ remains necessary 
but our sufficient condition $\delta > 1$ is weaker.  We believe that the condition $\delta > \f12$
should be sufficient as well, for which one may need to find a way to estimate the Ces\`aro kernel 
without using the closed form formula. 

More generally, we believe that Theorem \ref{thm:CesaroV0} should hold for all $\a_\k \ge 0$, which 
means that $\delta > 2\a_\k+\f12$ is necessary and sufficient. This is non-trivial even for the classical
Laguerre expansions in $\RR_+$, where the convergence of the $(C,\delta)$ means for $w_\a$ is 
determined via the convolution structure when $\a \ge 0$, but requires delicate hard estimate when
$\a < 0$. The same obstacle appears in the Laguerre expansions in $\RR_+^d$ with the weight 
$w_{\a_1}(x_1) \cdots w_{\a_d}(x_d)$, for which the convergence of the $(C,\delta)$ means is 
determined under the restriction $\a_i \ge 0$ \cite{Th, X00} and, as far as we are aware, the case
of negative $\a_i$ remains open. 

The product Laguerre expansions of $\RR_+^2$ is particularly pertinent to our $\sw_0$ weight when 
$d =2$. Indeed, as we shall see in the next section, the orthogonal expansion for $\sw_0$ and $d =2$ 
is closely related to the orthogonal expansion on the solid cone $\VV^2 =\{(x,t) \in \RR^2: |x| \le t\}$ with
respect to the weight function $(t^2-x^2)^{-\f12}$, and the latter is equivalent to the product Laguerre 
expansions with respect to the weight function $w_{-\f12}(x_1)w_{-\f12}(x_2)$ on $\RR_+^2$; see 
Remark \ref{rem:5.1}. 

\section{Laguerre expansions on the solid cone}
\setcounter{equation}{0}

In this section we consider the Laguerre expansion on the solid cone
$$
  \VV^{d+1} =\left \{(x,t) \in \RR^{d+1}: \|x\| \le t, \, x \in \RR^d, t \in \RR_+ \right\}, \qquad d \ge 1
$$
with respect to the weight function $W_{\k,\mu}$ defined by 
$$
 W_{\k, \mu}(x,t) = h_\k^2(x) (t^2-\|x\|^2)^{\mu-\f12} \e^{-t}, \quad \k_i \ge 0, \quad \mu > -\tfrac12. 
$$

\subsection{Orthogonal polynomials}
We define the inner product on $L^2(\VV^{d+1},W_{\k,\mu})$ with respect to $W_{\k,\mu}$ by 
\begin{align*}
    \la f,g\ra_{\k,\mu} &=  \bb_{k,\mu} \int_{\VV^{d+1}} f(x,t) g(x,t)  W_{\k,\mu}(x,t) \d x \d t, 
\end{align*}
where $\bb_{\k,\mu} = b_{2|\k|+2\mu+d-1} b_{\k,\mu}^\BB$ with $b_\mu^\BB$ being the normalization 
constant of the weight $h_\k^2(x) (1-\|x\|^2)^{\mu-\f12}$ on $\BB^d$. Let $\CV_n(\VV^{d+1},W_{\k,\mu})$ denote 
the space of orthogonal polynomials of degree $n$ with respect to this inner product. Then 
$$
   \dim \CV_n(\VV^{d+1},W_{\mu})  = \binom{n+d}{n}. 
$$
An orthogonal basis for this space can be given in terms of orthogonal polynomials on the unit ball 
and the Laguerre polynomials. 
 
\begin{prop}\label{prop:OPV}
Let $\{P_{\kb}^m(\varpi_{\k,\mu}): |\kb|=n\}$ be an orthonormal basis of $\CV_n(\BB^d,\varpi_{\k,\mu})$. Define 
\begin{equation}\label{eq:coneLbf}
   \Lb_{m,\kb}^{n}(x,t) = L_{n-m}^{2m +2|\k| + 2\mu + d-1}(t)t^m  P_{\kb}^m\left(\varpi_{\k,\mu}; \f{x} t \right ), 
     \quad |\kb| =m, \quad  0 \le m \le n.
\end{equation}
Then $\{\Lb_{m,\kb}^n: |\kb| = m, \quad 0 \le m \le n\}$ is an orthogonal basis of $\CV_n(\VV^{d+1},W_{\k,\mu})$.
Moreover, the norm square of $L_{m,\kb}^{n}$ is given by
$$
\hb^\Lb_{m,n}: = \la \Lb_{m,\kb}^n, \Lb_{m,\kb}^n\ra_{\k,\mu} =  \frac{(2|\k|+2 \mu + d)_{n+m}}{(n-m)!}.
$$
\end{prop}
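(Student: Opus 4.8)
The plan is to mirror the proof of Proposition \ref{prop:OPV0} verbatim in structure, replacing the spherical--polar reduction \eqref{eq:intV0} of $\VV_0^{d+1}$ by the analogous radial substitution on the solid cone and the spherical $h$-harmonics by the ball orthogonal polynomials $P_\kb^m(\varpi_{\k,\mu};\cdot)$. First I would introduce $x = t y$ with $y \in \BB^d$, $t \in \RR_+$, which sweeps out $\VV^{d+1}$ by dilating the unit ball, so that $\int_{\VV^{d+1}} f\,\d x\,\d t = \int_0^\infty t^d \int_{\BB^d} f(ty,t)\,\d y\,\d t$. Since $P_\kb^m(\varpi_{\k,\mu};\cdot)$ has degree $m$, the factor $t^m P_\kb^m(\varpi_{\k,\mu}; x/t)$ is a genuine polynomial of degree $m$ in $(x,t)$, and under the substitution it becomes simply $t^m P_\kb^m(\varpi_{\k,\mu}; y)$; this decoupling of the $t$-- and $y$--dependence is the structural point that makes the entire computation separate.

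Next I would transform the measure. A direct computation gives $h_\k^2(ty) = t^{2|\k|} h_\k^2(y)$ and $(t^2-\|ty\|^2)^{\mu-\f12} = t^{2\mu-1}(1-\|y\|^2)^{\mu-\f12}$, while $\d x\,\d t = t^d\,\d y\,\d t$, so that
\begin{equation*}
  W_{\k,\mu}(x,t)\,\d x\,\d t = t^{2|\k|+2\mu+d-1}\,\e^{-t}\,\varpi_{\k,\mu}(y)\,\d y\,\d t .
\end{equation*}
Substituting this together with the decoupled polynomials into $\la \Lb_{m,\kb}^n,\Lb_{m',\kb'}^{n'}\ra_{\k,\mu}$ factors the integral as the product of a one-dimensional Laguerre integral in $t$, with the powers of $t$ collecting to $t^{m+m'+2|\k|+2\mu+d-1}$, and the ball inner product $b_{\k,\mu}^\BB \int_{\BB^d} P_\kb^m P_{\kb'}^{m'}\varpi_{\k,\mu}\,\d y$ in $y$.

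I would then invoke orthogonality in each variable. For $m \ne m'$ the ball factor vanishes because $P_\kb^m(\varpi_{\k,\mu};\cdot)$ is orthogonal to every polynomial of degree below $m$; for $m=m'$ it equals $\delta_{\kb,\kb'}$ by orthonormality, which simultaneously forces the two Laguerre indices $2m+2|\k|+2\mu+d-1 =: \a$ to agree. The surviving $t$-integral $\int_0^\infty L_{n-m}^\a L_{n'-m}^\a\,t^\a \e^{-t}\,\d t = b_\a^{-1} h_{n-m}^\a\,\delta_{n,n'}$ is then the Laguerre orthogonality relation of Section 2.1. This establishes mutual orthogonality of $\{\Lb_{m,\kb}^n\}$ across all degrees; since for fixed $n$ the index ranges yield $\sum_{m=0}^n \dim\CV_m(\BB^d,\varpi_{\k,\mu}) = \binom{n+d}{n}$ polynomials, all of total degree $n$, and each is orthogonal to every lower-degree member, they span and hence form an orthogonal basis of $\CV_n(\VV^{d+1},W_{\k,\mu})$.

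Finally, for the norm I would assemble the constants. Using $\bb_{\k,\mu} = b_{2|\k|+2\mu+d-1}\,b_{\k,\mu}^\BB$, the factor $b_{\k,\mu}^\BB$ cancels against the ball normalization, leaving $\hb^\Lb_{m,n} = \frac{b_{2|\k|+2\mu+d-1}}{b_\a}\,h_{n-m}^\a = \frac{\Gamma(\a+1)}{\Gamma(2|\k|+2\mu+d)}\,h_{n-m}^\a$ with $\a = 2m+2|\k|+2\mu+d-1$. I expect the only real (purely computational) obstacle to be the Pochhammer bookkeeping: inserting $h_{n-m}^\a = (2m+2|\k|+2\mu+d)_{n-m}/(n-m)!$ and applying $(c)_{2m}(c+2m)_{n-m} = (c)_{n+m}$ with $c = 2|\k|+2\mu+d$ to collapse the product to the stated $\frac{(2|\k|+2\mu+d)_{n+m}}{(n-m)!}$. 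Tracking the several normalization constants correctly, rather than any conceptual difficulty, is the delicate part.
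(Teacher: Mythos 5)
Your proof is correct and follows essentially the route the paper intends: the paper disposes of this proposition by citing \cite[Proposition 3.3]{X20a} for the orthogonality and remarking that the norm ``can be verified directly as in the proof of Proposition \ref{prop:OPV0},'' and your argument is precisely that verification, transplanted to the cone via the substitution $x=ty$, $\d x\,\d t = t^d\,\d y\,\d t$, which separates the integral into the Laguerre factor with parameter $2m+2|\k|+2\mu+d-1$ and the normalized ball inner product. Your Pochhammer collapse $(c)_{2m}(c+2m)_{n-m}=(c)_{n+m}$ with $c=2|\k|+2\mu+d$ reproduces the stated $\hb^\Lb_{m,n}$ exactly, so nothing is missing.
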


The orthogonality is stated in \cite[Proposition 3.3]{X20a} and the norm can be verified directly as in the proof of
Proposition \ref{prop:OPV}. When $\k =0$, the polynomials $\Lb_{m,\ell}^n$ are called the Laguerre polynomials 
on the cone. 

\begin{rem}\label{rem:5.1}
When $d =1$, the cone $\VV_0^2$ is the wedge domain bounded by the two lines $x = \pm t$ in $\RR^2$
and the weight function is $W_{\k,\mu}(x,t) = |x|^{2\k}(t^2-x^2)^{\mu-\f12} \e^{-t}$, where we have written 
$\k = \k_1 \ge0$. If we rotate the domain by $90^\circ$ by setting $x_1  = \frac{t+x}{2}$ and $x_2 = \frac{t-x}{2}$,
then the wedge domain becomes $\RR_+^2$ and the weight function $W_{\k,\mu}$ becomes 
\begin{equation} \label{eq:U-weight}
    U_{\k,\mu} (x_1,x_2) =  |x_1-x_2|^{2\k} |x_1 x_2|^{\mu-\f12} \e^{-x_1 - x_2}, \qquad (x_1,x_2) \in \RR_+^2.
\end{equation}
In particular, for $\k = 0$, $U_{\k,\mu}(x_1,x_2) = w_{\mu-\f12}(x_1)w_{\mu-\f12}(x_2)$ is the product Laguerre
weight. 
\end{rem}

For $f \in L^2(\VV^{d+1}, W_{\k,\mu})$, the Fourier-Laguerre expansion on the cone is defined by
$$
   f = \sum_{n=0}^\infty \sum_{m=0}^n \sum_{|\kb|=m} \wh f_{m,\kb}^n \Lb_{m,\kb}^n 
   \qquad \hbox{with}\qquad  \wh f_{m,\kb}^n =  \frac{\la f, \Lb_{m,\kb}^n\ra_{\mu}} {\hb_{m,n}^\Lb}
$$
For $\k = 0$, this orthogonal expansion can be used to derive an explicit solution for the non-homogeneous
wave equation; see \cite{OX}. The projection operator $\proj_n: L^2(\VV^{d+1}; W_{\k,\mu}) \mapsto \CV_n^d(\VV^{d+1},W_{\k,\mu})$ and
the $n$-th partial sum operator $\Sb_n f$ of this expansion are defined by 
$$
    \proj_n \left(W_{\k,\mu}; f\right) = \sum_{m=0}^n \sum_{|\kb| =m}  \wh f_{m,\kb}^n \Lb_{m,\kb}^n
    \quad \hbox{and} \quad  \Sb_n f\left(W_{\k,\mu}; f\right)  = \sum_{k=0}^n \proj_k \left(W_{\k,\mu}; f\right).
$$
If $f(x,t)$ depends only on $t$, then $\wh f_{m,\kb}^n = 0$ for all $m > 0$, and the series is again reduced
to the classical Fourier-Laguerre series. 

\begin{prop}
Let $d \ge 1$, $\k \ge 0$ and $\mu \ge -\f12$. Define $\a = \mu + |\k|+\frac{d-1}{2}$. If $f(x,t) = f_0(t)$, 
where $f_0 \in L^2(\RR_+; w_{2\a})$. Then the Fourier-Laguerre series of $f$ on the cone is equal to 
the Fourier-Laguree series of $f_0$ in $L^2(\RR_+; w_{2\a})$. In particular, 
$$
     \Sb_n \left(W_{\k,\mu}; f, (x,t)\right) = s_n (w_{2 \a}; f_0, t), \qquad n= 0,1,2,\ldots. 
$$
\end{prop}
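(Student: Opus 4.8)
The plan is to substitute the orthogonal basis from Proposition~\ref{prop:OPV} into the Fourier coefficients and exploit the product structure of the cone weight. First I would record the polar-type factorization of the cone integral: writing $x = t y$ with $y \in \BB^d$, so that $\d x = t^d\,\d y$, and using the homogeneities $h_\k^2(ty) = t^{2|\k|} h_\k^2(y)$ together with $(t^2 - \|ty\|^2)^{\mu - \f12} = t^{2\mu - 1}(1-\|y\|^2)^{\mu - \f12}$, the cone measure splits as
$$
  W_{\k,\mu}(x,t)\,\d x\,\d t = t^{2|\k| + 2\mu + d - 1}\e^{-t}\,\d t\,\varpi_{\k,\mu}(y)\,\d y,
$$
where $\varpi_{\k,\mu}(y) = h_\k^2(y)(1-\|y\|^2)^{\mu - \f12}$ is exactly the ball weight from the preliminaries.

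Next I would compute the coefficient $\wh f_{m,\kb}^n$ of $f(x,t) = f_0(t)$ against $\Lb_{m,\kb}^n$. Since $P_\kb^m(\varpi_{\k,\mu}; x/t) = P_\kb^m(\varpi_{\k,\mu}; y)$ depends only on $y$, the integral over the cone factors as a product of a one-variable Laguerre integral in $t$ and the ball integral $\int_{\BB^d} P_\kb^m(\varpi_{\k,\mu}; y)\varpi_{\k,\mu}(y)\,\d y$. The crux is that, for $m \ge 1$, the polynomial $P_\kb^m$ has degree $m$ on $\BB^d$ and is therefore orthogonal to the constant $1 \in \CV_0(\BB^d, \varpi_{\k,\mu})$, so this ball integral vanishes. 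Hence $\wh f_{m,\kb}^n = 0$ for every $m \ge 1$, and the whole expansion collapses to the terms with $m = 0$.

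For the surviving $m = 0$ term there is a single index $\kb = \mathbf{0}$, and since $\la 1,1\ra_\BB = 1$ forces $P_{\mathbf 0}^0 = 1$, the basis element reduces to $\Lb_{0,\mathbf 0}^n(x,t) = L_n^{2|\k| + 2\mu + d - 1}(t) = L_n^{2\a}(t)$, using $2\a = 2\mu + 2|\k| + d - 1$. Here I would carefully track the constants: the ball integral contributes $1/b_{\k,\mu}^\BB$, which combines with $\bb_{\k,\mu} = b_{2|\k|+2\mu+d-1}\,b_{\k,\mu}^\BB$ to leave precisely $b_{2\a}$, so that $\la f, \Lb_{0,\mathbf 0}^n\ra_{\k,\mu} = \la f_0, L_n^{2\a}\ra_{2\a}$; likewise the norm becomes $\hb_{0,n}^\Lb = (2\a+1)_n/n! = h_n^{2\a}$. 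Thus $\wh f_{0,\mathbf 0}^n$ is exactly the classical Fourier-Laguerre coefficient of $f_0$ with parameter $2\a$, and summing the projections over $k = 0,\ldots,n$ identifies $\Sb_n(W_{\k,\mu}; f)$ with $s_n(w_{2\a}; f_0)$.

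The main point requiring care is the bookkeeping of the normalization constants, confirming that they collapse to $b_{2\a}$ and $h_n^{2\a}$; the argument itself is the exact analogue of the conic-surface case proved earlier, with the Laguerre parameter $2|\k| + d - 2$ there replaced by $2\a = 2|\k| + 2\mu + d - 1$ here. The essential mechanism is identical: the product structure of the weight, together with the orthogonality of the ball polynomials to constants, annihilates all contributions with $m \ge 1$.
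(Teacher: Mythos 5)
Your proposal is correct and follows essentially the same route as the paper, which states this proposition without a written proof, remarking only that $\wh f_{m,\kb}^n = 0$ for all $m > 0$ when $f$ depends only on $t$ --- precisely your observation that $P_\kb^m$ is orthogonal to constants on $\BB^d$ once the weight is factored as $w_{2\a}(t)\,\d t\,\varpi_{\k,\mu}(y)\,\d y$. Your bookkeeping of the normalization constants (the ball integral contributing $1/b_{\k,\mu}^\BB$ so that $\bb_{\k,\mu}$ collapses to $b_{2\a}$, and $\hb_{0,n}^\Lb = (2\a+1)_n/n! = h_n^{2\a}$) supplies exactly the details the paper leaves to the reader.
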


As in the case of conic surface, the polynomials $\Lb_{m,\kb}^{n}$ can also be deduced from taking the
limit of the Jacobi type orthogonal polynomials defined in \cite{X20a} on the solid cone. A more direct 
way, however, is to relate them to the orthogonal polynomials on the conic surface that we have already 
encountered. The relation is modeled after, in fact uses, the relation between orthogonal polynomials on
the unit ball and those on the unit sphere. 

For $(x,t) \in \VV^{d+1}$, we introduce the notation $X = (x,\sqrt{t^2-\|x\|^2})$, so that $\|X\| =t$ and 
$(X,t) \in \VV_0^{d+2}$. Similarly, for $(y,s) \in \VV^{d+1}$, we let $Y = (y,\sqrt{t^2-\|y\|^2}) \in \VV_0^{d+2}$. Define 
$\bkappa = (\k,\mu) \in \RR^{d+2}$. Then, for $x = t x'$, $x'\in \BB^d$, we can write
\begin{equation} \label{eq:weightV-V0}
 W_{\k,\mu} (x,t) \d x \d t = h_\bkappa^2(X) t^{-1} e^{-t} \frac{\d x' \d t}{\sqrt{1-\|x'\|^2}} = 
     \sw_{\kb}(X) \d \s (X,t),
\end{equation}
where $\d \s = \d \s_{\VV_0^{d+2}}$ is the Lebesgue measure on the surface $\VV_0^{d+2}$ in the right-hand side. 
Let 
$$
      \VV_{0,+}^{d+2} = \left\{(x, x_{d+1}, t) \in \VV_0^{d+2}: x_{d+1} \ge 0\right\},
$$ 
which is half of the conic surface $\VV_0^{d+2}$. Likewise, we define  $\VV_{0,-}^{d+2}$ as the other half
with $x_{d+1} < 0$. Let $X^- = (x,- \sqrt{t^2-\|x\|^2})$. Then $(x,t) \mapsto (X,t)$ maps $\VV^{d+1}$ onto 
$\VV_{0,+}^{d+2}$ and $(x,t) \mapsto (X^-,t)$ maps $\VV^{d+1}$ onto $\VV_{0,-}^{d+2}$.  By symmetry, 
it follows that 
\begin{align} \label{eq:OPVandV_0}
 \int_{\VV_{0}^{d+2}}   f(y,s)  \sw_{\bkappa}(y,s)\d \s (y,s) 
   =  \int_{\VV^{d+1}} \frac12 \left[f(X,t) + f(X^-,t) \right]W_{\k,\mu} (X,t) \d x \d t.     
\end{align}
As a consequence of this relation, we obtain a relation between orthogonal polynomials on the cone 
and the conic surface. To emphasis the dependence on the weight function, we denote the orthogonal
polynomial given in \eqref{eq:coneLsf} by $\Lb_{m,\kb}^n(W_{\k,\mu})$. 
Define 
$$
   Y_{m, \kb}^{n,1} (X,t) = \Lb_{m,\kb}^n(W_{\k,\mu}; (x,t)),  \qquad 
   Y_{m, \kb}^{n,2} (X,t) = X_{d+1} \Lb_{m,\kb}^{n-1}(W_{\k,\mu+1}; (x,t)) 
$$
for $X = (x, X_{d+1}) \in \RR^{d+1}$ and $(X,t) \in \VV_0^{d+2}$. 

\begin{prop} \label{prop:OPV-V0}
Let $\bkappa = (\k,\mu)$ and $X = (x, X_{d+1})$, $X_{d+1} = \sqrt{t^2 - \|x\|^2}$. Under the mapping 
$\VV^{d+1} \mapsto \VV_{0,+}^{d+2}: (x,t) \mapsto (X,t)$,  
$$
   \CV_n\left(\VV_0^{d+2}, \sw_{\bkappa}\right) = \CV_n\left(\VV_{0}^{d+1}, W_{\k,\mu} \right)\bigoplus
        X_{d+1} \CV_n \left(\VV_{0}^{d+1}, W_{\k,\mu+1} \right).
$$
More precisely, an orthogonal basis of  $\CV_n\left(\VV_0^{d+2}, \sw_{\bkappa}\right)$ is given by 
$$
\{Y_{\kb,m}^{n,1}: |\kb| =m, 0\le m \le n\}\cup \{ Y_{\kb,m}^{n,2}: |\kb| =m, 0\le m \le n-1\}.
$$
\end{prop}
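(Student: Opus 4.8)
The plan is to transplant the ball--sphere correspondence recalled in Section~2 through the quadratic map $(x,t)\mapsto(X,t)$, where $X=(x,X_{d+1})$ with $X_{d+1}=\sqrt{t^2-\|x\|^2}$. The one tool I need is the symmetry identity \eqref{eq:OPVandV_0}: applied to a product $FG$ of two functions on $\VV_0^{d+2}$ it gives
$$
\la F,G\ra_{\bkappa}=\ssb_{\bkappa}\int_{\VV^{d+1}}\tfrac12\big[(FG)(X,t)+(FG)(X^-,t)\big]W_{\k,\mu}(X,t)\,\d x\,\d t,
$$
so that every inner product on the conic surface $\VV_0^{d+2}$ is rewritten as an integral over the solid cone $\VV^{d+1}$. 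First I would record that each $Y_{m,\kb}^{n,1}$ and $Y_{m,\kb}^{n,2}$ is a genuine element of $\RR[X,t]/\la\|X\|^2-t^2\ra$ of degree exactly $n$: the first is $\Lb_{m,\kb}^n(W_{\k,\mu})$, a degree-$n$ polynomial in $(x,t)$ and hence in $(X,t)$; the second is $X_{d+1}$ times a degree-$(n-1)$ polynomial, again of total degree $n$.

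The core of the argument is orthogonality, which splits into three cases according to parity in $X_{d+1}$. For a cross pair $F=Y_{m,\kb}^{n,1}$, $G=Y_{m',\kb'}^{n',2}$ the product $FG$ is odd in $X_{d+1}$, so $(FG)(X^-,t)=-(FG)(X,t)$ and the bracket above vanishes identically; thus the two families are mutually orthogonal for all indices. For a pair drawn from the first family $FG$ is independent of $X_{d+1}$, and the displayed formula collapses to a multiple of $\la\Lb_{m,\kb}^n(W_{\k,\mu}),\Lb_{m',\kb'}^{n'}(W_{\k,\mu})\ra_{\k,\mu}$ on the cone, which vanishes unless $(n,m,\kb)=(n',m',\kb')$ by Proposition~\ref{prop:OPV}. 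For a pair from the second family the product carries the factor $X_{d+1}^2$, which equals $t^2-\|x\|^2$ on the surface; invoking the identity $(t^2-\|x\|^2)W_{\k,\mu}=W_{\k,\mu+1}$ converts the reduced integral into a multiple of $\la\Lb_{m,\kb}^{n-1}(W_{\k,\mu+1}),\Lb_{m',\kb'}^{n'-1}(W_{\k,\mu+1})\ra_{\k,\mu+1}$, which again vanishes off the diagonal by Proposition~\ref{prop:OPV} with parameter $\mu+1$. Since these reductions hold across all degrees, the entire collection is an orthogonal system; evaluating the diagonal norms the same way yields positive multiples of the cone norms in Proposition~\ref{prop:OPV}, so every member is nonzero.

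To finish I would count dimensions. The first family contributes $\dim\CV_n(\VV^{d+1},W_{\k,\mu})=\binom{n+d}{n}$ members and the second $\dim\CV_{n-1}(\VV^{d+1},W_{\k,\mu+1})=\binom{n+d-1}{n-1}$ members, and a short computation (writing $\binom{n+d}{n}=\binom{n+d}{d}$, $\binom{n+d-1}{n-1}=\binom{n+d-1}{d}$ and applying Pascal's rule twice) gives
$$
\binom{n+d}{n}+\binom{n+d-1}{n-1}=\binom{n+d+1}{n}-\binom{n+d-1}{n-2}=\dim\CV_n(\VV_0^{d+2},\sw_{\bkappa}).
$$
As the candidates are mutually orthogonal, hence linearly independent, degree-$n$ polynomials in exactly the right number, they form an orthogonal basis of $\CV_n(\VV_0^{d+2},\sw_{\bkappa})$, and their separation into the two families is precisely the asserted direct-sum decomposition.

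I do not expect a genuinely hard step here, since this is a clean analogue of the ball--sphere relation; the one place demanding care is the bookkeeping for the second family, where one must replace $X_{d+1}^2$ by $t^2-\|x\|^2$ \emph{before} integrating and then absorb the factor $t^2-\|x\|^2$ into the weight to raise $\mu$ to $\mu+1$. The parity argument for the cross terms and the binomial identity are routine, and because no normalization constant affects the vanishing of an inner product, the only quantitative check needed is the elementary dimension identity above.
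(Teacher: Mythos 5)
Your proposal is correct and follows essentially the same route as the paper's own proof: transferring inner products on $\VV_0^{d+2}$ to the solid cone via \eqref{eq:OPVandV_0}, killing the cross terms by parity in $X_{d+1}$, absorbing $X_{d+1}^2 = t^2-\|x\|^2$ into the weight to raise $\mu$ to $\mu+1$ so that both families reduce to the orthogonality in Proposition \ref{prop:OPV}, and finishing with the dimension count. The only differences are presentational: you spell out the Pascal's-rule verification that the paper dismisses as ``easy to verify,'' while the paper is slightly more explicit about why $t^m P_{\kb}^m(x/t)$ is a polynomial (the parity of $P_{\kb}^m$), a point you inherit from Proposition \ref{prop:OPV}.
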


\begin{proof}
Since $P_\kb^m$ is even when $m$ is even and odd when $m$ is odd, it follows that $t^m P_\kb^m(\frac{x}{t})$
is a homogeneous polynomial of degree $m$ in the variables $(x,t)$ and, in particular, a polynomials of 
degree $m$ in the variables $(X,t)$. Hence, it follows that $Y_{\kb,m}^{n,1}$ is a polynomial of 
degree $n$ in $(X,t)$ variables, even with respect to $X_{d+1}$ variable. Moreover, the same argument 
shows that $Y_{\kb,m}^{n,2}$ is a polynomial of degree $n$ in $(X,t)$ variables, odd in $X_{d+1}$ variable. 
By \eqref{eq:OPVandV_0}, it follows that $Y_{\kb,m}^{n,1}$ and $Y_{\kb,m}^{n,2}$ are orthogonal with 
respect to $\la \cdot,\cdot \ra_{\bkappa}$ on $\VV_0^{d+2}$ by parity and, moreover, by \eqref{eq:weightV-V0} 
and $W_{\k,\mu+1} (x,t) \d x \d t = X_{d+1}^2 \sw_{\kb}(X) \d \s (X,t)$ followed by \eqref{eq:weightV-V0},
\begin{align*}
     \big \langle  Y_{\kb,m}^{n,2}, Y_{\kb',m'}^{n',2} \big \rangle_{\bkappa} \,& = 
    \big \langle \Lb_{m,\kb}^n(W_{\k,\mu}), \Lb_{m',\kb'}^{n'}(W_{\k,\mu}) \big \rangle_{\k,\mu} \\
   \big \langle  Y_{\kb,m}^{n,1}, Y_{\kb',m'}^{n',1} \big \rangle_{\bkappa} 
     \,& =  \big \langle   \Lb_{m,\kb}^{n-1}(W_{\k,\mu+1}),  \Lb_{m',\kb'}^{n'-1}(W_{\k,\mu+1}) \big \rangle_{\k,\mu+1},
\end{align*}
which shows that $ \Lb_{m,\kb}^n(W_{\k,\mu})$ and $X_{d+1} \Lb_{m,\kb}^{n-1}(W_{\k,\mu+1})$ are elements
of $\CV_n(\VV_0^{d+2}, \sw_\bkappa)$. Furthermore, it is easy to verify that 
$$
  \dim \CV_n\left(\VV_0^{d+2}, \sw_\bkappa\right) = \dim \CV_n\left(\VV^{d+1}, W_{\k,\mu}\right)+ 
  \dim \CV_{n-1} \left(\VV^{d+1}, W_{\k,\mu+1}\right).
$$
This completes the proof. 
\end{proof}

\subsection{Reproducing and Poisson kernels}
Let $\Pb_n(W_{\k,\mu}; \cdot, \cdot)$ be the reproducing kerne of $\CV_n(\VV^{d+1}, W_{\k,\mu})$. It is uniquely
determined by 
$$
  \bb_{\k,\mu} \int_{\VV^{d+1}} P(y,s) \Pb_n\left(W_{\k,\mu}; \cdot, (y,s)\right)W_{\k,\mu}(y, s) \d y \d s = P, 
\quad \forall P \in \CV_n(\VV^{d+1}(W_{\k,\mu}).
$$

\begin{thm} \label{thm:repordVL}
Let $d \ge 1$ and $\mu \ge 0$. Let $\a =\a_{\k,\mu} = |\k|+ \mu+\f{d-1}{2} > 0$. Then for $(x,t), (y,s) \in \VV^{d+1}$, 
\begin{align} \label{eq:PnLV}
 & \Pb_n  \big(W_{\k,\mu}; (x,t),(y,s)\big) = C_{\k,\mu}  \int_{[1,1]^{d+1}} \int_0^\pi 
     L_n^{2 \a} \big(t+s+2 \brho(x,t,y,s;u) \cos \t \big)  \\
      \times & e^{-\brho (x,t,y,s;u) \cos \t} j_{\a-1} \big(\brho (x,t,y,s;u) \sin \t\big) (\sin \t)^{2\a -1}\d\t 
        \Phi_\k(u') (1-u_{d+1}^2)^{\mu -1}\d u,  \notag
\end{align}
where $C_{\k} =  \frac{2^{\a -1} \Gamma(\a +\f12)}{\sqrt{\pi}} c_{\k-\f{\mathbf{1}}{2}}
c_{\mu-\f{1}{2}}$, $u=(u',u_{d+1})$ and 
$$
    \brho (x,t,y,s;u)  = \sqrt{ \tfrac12 \left(t s + x_1y_1 u_1+\cdots +x_dy_d u_d 
         + \sqrt{t^2-\|x\|^2} \sqrt{s^2-\|y\|^2} u_{d+1}\right)},
$$ 
and \eqref{eq:PnLV} holds under the limit \eqref{eq:limitInt} if either $\a =0$ or $\k_i =0$ for
one or more $i$.  
\end{thm}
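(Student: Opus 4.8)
The plan is to derive the cone kernel from the conic-surface kernel via the ball--sphere type correspondence of Proposition \ref{prop:OPV-V0}. Writing $\bkappa = (\k,\mu) \in \RR^{d+1}$ and recalling that the map $(x,t) \mapsto (X,t)$ with $X = (x,X_{d+1})$, $X_{d+1} = \sqrt{t^2-\|x\|^2}$, identifies $\CV_n(\VV^{d+1}, W_{\k,\mu})$ with the subspace of $\CV_n(\VV_0^{d+2}, \sw_\bkappa)$ consisting of elements even in $X_{d+1}$ (the summand $Y_{\kb,m}^{n,1}$), I would first establish the symmetrization identity
\begin{equation*}
  \Pb_n\big(W_{\k,\mu}; (x,t),(y,s)\big) = \tfrac12 \left[ \sP_n\big(\sw_\bkappa; (X,t),(Y,s)\big) + \sP_n\big(\sw_\bkappa; (X,t),(Y^-,s)\big)\right],
\end{equation*}
where $Y = (y,\sqrt{s^2-\|y\|^2})$ and $Y^- = (y,-\sqrt{s^2-\|y\|^2})$. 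This is the standard fact that the reproducing kernel of the even part of a space split by the isometric involution $\sigma\colon X_{d+1}\mapsto -X_{d+1}$ is $\tfrac12[K(u,v)+K(u,\sigma v)]$; here $\sigma$ is an isometry because $\sw_\bkappa$ depends on $X_{d+1}$ only through $|X_{d+1}|$ and through $t=\|X\|$, both $\sigma$-invariant. Its validity is checked directly from the integral identity \eqref{eq:OPVandV_0}, using that an even test polynomial $P$ satisfies $P(Y^-,s) = P(Y,s)$ and that $\sP_n(\sw_\bkappa;\cdot,\cdot)$ is $\sigma$-invariant.

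Next I would substitute the closed form of Theorem \ref{thm:reprod-closed}, applied on $\VV_0^{d+2}$ (that is, with $d$ replaced by $d+1$ and $\k$ by $\bkappa$), into both terms. The relevant index is $\a_\bkappa = |\bkappa| + \tfrac{(d+1)-2}{2} = |\k| + \mu + \tfrac{d-1}{2} = \a$, and the surface $\rho$-function reads $\rho(X,t,Y,s;u) = \sqrt{\tfrac12(ts + x_1 y_1 u_1 + \cdots + x_d y_d u_d + X_{d+1}Y_{d+1}u_{d+1})}$ for $u = (u',u_{d+1}) \in [-1,1]^{d+1}$, whose value at $Y$ equals the $\brho(x,t,y,s;u)$ of the statement since $X_{d+1}Y_{d+1} = \sqrt{t^2-\|x\|^2}\sqrt{s^2-\|y\|^2}$. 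The only difference between the two terms lies in the sign of the $u_{d+1}$-contribution: $\rho(X,t,Y^-,s;u)$ carries $-X_{d+1}Y_{d+1}u_{d+1}$ in place of $+X_{d+1}Y_{d+1}u_{d+1}$.

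The decisive step is then to change variables $u_{d+1} \mapsto -u_{d+1}$ in the $Y^-$-integral. Under this substitution the two $\rho$-arguments coincide, while the factorization $\Phi_\bkappa(u) = \Phi_\k(u')(1+u_{d+1})(1-u_{d+1}^2)^{\mu-1}$ shows that the $Y$-term contributes the factor $(1+u_{d+1})$ and the transformed $Y^-$-term the factor $(1-u_{d+1})$, both against the same even weight $\Phi_\k(u')(1-u_{d+1}^2)^{\mu-1}$. Averaging, $\tfrac12[(1+u_{d+1}) + (1-u_{d+1})] = 1$, so the two terms collapse into the single integral of the statement, with weight $\Phi_\k(u')(1-u_{d+1}^2)^{\mu-1}$ over $[-1,1]^{d+1}$; matching the constant $C_{\k,\mu} = \tfrac{2^{\a-1}\Gamma(\a+\frac12)}{\sqrt\pi}\, c_{\k-\frac{\mathbf{1}}{2}}\, c_{\mu-\frac12}$ is then bookkeeping from the constant in Theorem \ref{thm:reprod-closed}. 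I expect this cancellation of the $(1\pm u_{d+1})$ factors --- the mechanism by which the $\Phi$-type weight on the sphere becomes the symmetric Gegenbauer weight $(1-u_{d+1}^2)^{\mu-1}$ on the ball --- to be the crux of the argument. The remaining care is the treatment of the degenerate cases: $\a = 0$ (that is, $d=1$, $\k=0$, $\mu=0$), where one starts from \eqref{eq:PnLV0_d=2} instead of \eqref{eq:PnLV0}, and $\mu = 0$ or some $\k_i = 0$, handled throughout by the limit \eqref{eq:limitInt}.
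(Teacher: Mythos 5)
Your proposal is correct and takes essentially the same approach as the paper: identify $\CV_n(\VV^{d+1},W_{\k,\mu})$ with the even-in-$X_{d+1}$ subspace of $\CV_n(\VV_0^{d+2},\sw_{\bkappa})$ via Proposition \ref{prop:OPV-V0}, symmetrize to get $\Pb_n = \frac12\left[\sP_n\left(\sw_{\bkappa};(X,t),(Y,s)\right)+\sP_n\left(\sw_{\bkappa};(X,t),(Y^-,s)\right)\right]$, and substitute the closed form of Theorem \ref{thm:reprod-closed} on $\VV_0^{d+2}$. Your explicit substitution $u_{d+1}\mapsto -u_{d+1}$ with the cancellation $\frac12\left[(1+u_{d+1})+(1-u_{d+1})\right]=1$ is exactly the mechanism the paper compresses into the remark that the $\d u_{d+1}$ integral ``loses the $(1+u_{d+1})$ factor because of the symmetry.''
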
 

\begin{proof}
By Proposition \ref{prop:OPV-V0}, the space $\CV_n(\VV^{d+1}, W_{\k,\mu})$ corresponds to the subspace 
of  $\CV_n(\VV_0^{d+2}, \sw_{\bkappa})$ that consists of polynomials even in $X_{d+1}$ variable. By symmetry,
it follows readily that 
$$
   \Pb_n\left(W_{\k,\mu}; (x,t), (y,s)\right) = \frac12 \left[ \sP_n\left(\sw_{\bkappa}; (X,t), (Y,s)\right)
        + \sP_n\left(\sw_{\bkappa}; (X,t), (Y^-,s)\right) \right], 
$$
where $\bkappa = (\k_1,\ldots, \k_{d+1})$ and $\sP_n(\sw_{\bkappa}; \cdot,\cdot)$ is the reproducing 
kernel of $\CV_n(\VV^{d+2}, \sw_{\bkappa})$. Consequently, \eqref{eq:PnLV} follows from 
\eqref{eq:PnLV0} with $d+1$ replaced by $d+2$, where the integral with respect to $\d u_{d+1}$
losses $(1+u_{d+1})$ factor because of the symmetry. 
\end{proof}

We can also state such a closed formula when $\a_\k =0$ by using \eqref{eq:PnLV0_d=2}. Since $\a_\k = 0$ 
is equivalent to $\k = 0$, $\mu =0$ and $d =1$, which is the degenerate case that is equivalent to the product 
Laguerre weight $w_{-\f12}(x_1)w_{-\f12}(x_2)$ on $\RR_+^2$ by Remark \ref{rem:5.1}, we shall not write 
down the formula. 

We can also define the Poisson kernel of the orthogonal expansion on $\VV^{d+1}$ by
$$
 \Pb \big(W_{\k,\mu}; r, (x,t),(y,s)\big) = \sum_{n=0}^\infty  \Pb_n  \big(W_{\k,\mu}; (x,t),(y,s)\big) r^n. 
$$
Then, as an analogue of Theorem \ref{thm:PoissonV0L}, we obtain a closed form formula.

\begin{thm} \label{thm:PoissonVL}
Under the same assumption as in Theorem \ref{thm:repordVL}, 
\begin{align*}
 \Pb(\sw_{\k,\mu}; r, (x,t), (y,s))  =
   \, & \frac{\e^{ -\frac{(t+s)r}{1-r}}}{(1-r)^{2\a_{\k,\mu} +1}}  
     \int_{[-1,1]^{d+2}}   \exp\bigg\{\frac{2 \sqrt{r}  v }{1-r} \brho(x,t,y,s;u)\bigg\} \\
        & \times c_{\a_{\k,\mu}-\f12} c_{\bkappa-\f{\mathbf{1}}{2}} \Phi_\k(u') (1-u_{d+1}^2)^{\mu-1} \d u
           (1-v^2)^{\a_{\k,\mu}-1} \d v,
\end{align*}
and it holds under the limit \eqref{eq:limitInt} if either $\a_\k =0$ or $\k_i =0$ for one or more $i$.
\end{thm}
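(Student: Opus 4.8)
The plan is to mirror the proof of Theorem~\ref{thm:repordVL}, replacing the reproducing kernels by their generating functions. First I would invoke Proposition~\ref{prop:OPV-V0}: under the map $(x,t)\mapsto(X,t)$ with $X=(x,\sqrt{t^2-\|x\|^2})$, the space $\CV_n(\VV^{d+1},W_{\k,\mu})$ is identified with the subspace of $\CV_n(\VV_0^{d+2},\sw_\bkappa)$ of polynomials even in $X_{d+1}$, where $\bkappa=(\k,\mu)\in\RR^{d+1}$. The same parity/symmetry argument used in Theorem~\ref{thm:repordVL} then gives, for every $n$,
\begin{equation*}
 \Pb_n\big(W_{\k,\mu}; (x,t),(y,s)\big) = \tfrac12\left[\sP_n\big(\sw_\bkappa;(X,t),(Y,s)\big) + \sP_n\big(\sw_\bkappa;(X,t),(Y^-,s)\big)\right],
\end{equation*}
where $Y=(y,\sqrt{s^2-\|y\|^2})$ and $Y^-=(y,-\sqrt{s^2-\|y\|^2})$.

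Next I would multiply this identity by $r^n$ and sum over $n\ge0$. Since the Poisson kernels on both $\VV^{d+1}$ and $\VV_0^{d+2}$ are, by definition, the generating functions of their respective reproducing kernels, and since the relevant series converges absolutely for $0\le r<1$ by Theorem~\ref{thm:PoissonV0L}, the termwise summation yields
\begin{equation*}
 \Pb\big(W_{\k,\mu}; r,(x,t),(y,s)\big) = \tfrac12\left[\sQ_r\big(\sw_\bkappa;(X,t),(Y,s)\big) + \sQ_r\big(\sw_\bkappa;(X,t),(Y^-,s)\big)\right].
\end{equation*}

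The third step is to apply Theorem~\ref{thm:PoissonV0L} to the conic surface $\VV_0^{d+2}$, i.e.\ with $d$ replaced by $d+1$ and $\k$ by $\bkappa$. Here $\a_\bkappa=|\bkappa|+\tfrac{(d+1)-2}{2}=|\k|+\mu+\tfrac{d-1}{2}=\a_{\k,\mu}$; writing $u=(u',u_{d+1})$ with $u'\in[-1,1]^d$, one has $\Phi_\bkappa(u)=\Phi_\k(u')(1+u_{d+1})(1-u_{d+1}^2)^{\mu-1}$ and $c_{\bkappa-\f{\mathbf{1}}{2}}=c_{\k-\f{\mathbf{1}}{2}}c_{\mu-\f12}$. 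A direct substitution of $X,Y$ shows that the kernel $\rho$ of Theorem~\ref{thm:PoissonV0L} satisfies $\rho(X,t,Y,s;u)=\brho(x,t,y,s;u)$, while $\rho(X,t,Y^-,s;u)$ is the same expression with the sign of $u_{d+1}$ reversed inside $\brho$.

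Finally I would perform the symmetrization. In the second integral the change of variable $u_{d+1}\mapsto-u_{d+1}$ leaves $(1-u_{d+1}^2)^{\mu-1}$ and the $v$-integral unchanged, turns $\rho(X,t,Y^-,s;u)$ into $\brho(x,t,y,s;u)$, and replaces $(1+u_{d+1})$ by $(1-u_{d+1})$; averaging the two integrals then uses $\tfrac12[(1+u_{d+1})+(1-u_{d+1})]=1$, so the factor $(1+u_{d+1})$ is lost and the $u_{d+1}$-weight collapses to $(1-u_{d+1}^2)^{\mu-1}$, exactly as in Theorem~\ref{thm:repordVL}. This produces the asserted formula, with the degenerate cases $\a_{\k,\mu}=0$ or $\k_i=0$ interpreted through the limit \eqref{eq:limitInt}. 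I expect the proof to be largely mechanical once the per-degree identity is combined with the generating-function definition of the Poisson kernel; the only genuine care is in the constant bookkeeping and in checking that the symmetrization drops the factor $(1+u_{d+1})$ precisely to $1$.
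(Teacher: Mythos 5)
Your proposal is correct and follows exactly the route the paper intends: the paper states this theorem without a written proof, as the analogue of Theorem~\ref{thm:repordVL}, and your argument (the per-degree half-sum identity from Proposition~\ref{prop:OPV-V0}, termwise summation in $r$, application of Theorem~\ref{thm:PoissonV0L} with $d+1$ in place of $d$ and $\bkappa$ in place of $\k$, and the $u_{d+1}\mapsto -u_{d+1}$ symmetrization that collapses the factor $(1+u_{d+1})$ to $1$) is precisely the mechanism used in the proof of Theorem~\ref{thm:repordVL}. Your bookkeeping of $\a_\bkappa=\a_{\k,\mu}$, $\Phi_\bkappa(u)=\Phi_\k(u')(1+u_{d+1})(1-u_{d+1}^2)^{\mu-1}$, and $c_{\bkappa-\f{\mathbf{1}}{2}}=c_{\k-\f{\mathbf{1}}{2}}c_{\mu-\f12}$ is accurate.
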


\subsection{Pseudo convolution on the cone}
We can also define a pseudo convolution on the solid cone $\VV^{d+1}$, which will be bounded in the space 
$L^p_{\ub(\k,\mu)}(\VV^{d+1})$, defined as the space of functions with finite $\|f\|_{p, \k,\mu}$ norm, where
$$
 \|f\|_{p, \k,\mu}:= \left(\bb_{\k,\mu} \int_{\VV^{d+1}}| f(x,t) \e^{-t/2}|^p \ub(z,t) \d x \d t 
   \right)^{\f1p},
$$
for $1 \le p < \infty$ and $\ub(x,t) = h_\k^2(x) (t^2-\|x\|^2)^{\mu-\f12}$, and 
$$
   \|f\|_{\infty} =  \|f\|_{\infty, \k,\mu}: = \esssup\left \{f(x,t) \e^{-t/2}: (x,t) \in \VV^{d+1} \right \}. 
$$

Using the notation of Theorem \ref{thm:repordVL} with $\a = \a_{\k,\mu} > 0$, we define, for 
$\in L^1(\RR_+, \varpi_{2\a_{\k,\mu}})$ and $(x,y) \in \VV^{d+1}$, the generalized translation operator 
\begin{align*}
 \Tb_{(x,t)} g(y,s) = \, & C_{\k,\mu} \int_{[-1,1]^{d+1}}\int_{0}^\pi g(t+s+2 \brho(x,t,y,s;u) \cos \t \big)
   e^{- \brho (x,t,y,s;u)\cos \t}  \\
       &\times  j_{\a_\k -1} \big(\brho (x,t,y,s;u) \sin \t\big) (\sin \t)^{2\a_{\k,\mu} -1}\d\t
        \Phi_\k(u') (1-u_{d+1}^2)^{\mu-1}  \d u. 
        \notag
\end{align*}
By its definition and \eqref{eq:PnLV}, the reproducing kernel $\Pb_n(W_{k,\mu})$ can be written as 
\begin{equation} \label{eq:TLn=PnV}
    \Pb_n(W_{\k,\mu}; (x,t),\cdot ) =  \Tb_{(x,t)} L_n^{2|\k|+2\mu+d-1}, \qquad n = 0, 1, 2, \ldots, 
\end{equation}

\begin{prop} \label{lem:Tbd}
Let $d \ge 1$ and $\a_{\k,\mu} = \mu+ |\k|+\frac{d-1}{2} \ge \f12$. For $g\in L^p_{u(2\a_{\k,\mu})}(\RR_+)$,
$$
   \| \Tb_{(x,t)} g\|_{p, \k,\mu} \le  e^{t/2} \|g\|_{p,  u(2\a_\k)}, \qquad 1 \le p \le \infty.
$$
\end{prop}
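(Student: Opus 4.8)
The plan is to reduce the claim to the boundedness of the translation operator on the conic surface $\VV_0^{d+2}$ already established in Proposition~\ref{prop:sTbd}, exploiting the correspondence between the solid cone $\VV^{d+1}$ and one half of $\VV_0^{d+2}$ recorded in Proposition~\ref{prop:OPV-V0}. Set $\bkappa = (\k,\mu) \in \RR^{d+2}$, so that the surface parameter in $d+2$ variables is $\a_{\bkappa} = |\bkappa| + \f{(d+2)-2}{2} = |\k|+\mu+\f{d-1}{2} = \a_{\k,\mu}$; in particular the hypothesis $\a_{\k,\mu}\ge\f12$ is exactly the hypothesis needed to apply Proposition~\ref{prop:sTbd} on $\VV_0^{d+2}$. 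Writing $X = (x,\sqrt{t^2-\|x\|^2})$, $Y = (y,\sqrt{s^2-\|y\|^2})$ and $Y^- = (y,-\sqrt{s^2-\|y\|^2})$, the first step is to verify the pointwise identity
\begin{equation*}
  \Tb_{(x,t)} g(y,s) = \tfrac12\left[\sT_{(X,t)} g(Y,s) + \sT_{(X,t)} g(Y^-,s)\right],
\end{equation*}
where $\sT_{(X,t)}$ is the surface translation operator of Definition~\ref{def:sT-transl} taken on $\VV_0^{d+2}$ with parameter $\bkappa$. This is precisely the symmetrization underlying the proof of Theorem~\ref{thm:repordVL}: since $\brho(x,t,y,s;u) = \rho(X,t,Y,s;u)$ depends on $u_{d+1}$ only through the linear term $X_{d+1}Y_{d+1}u_{d+1}$, replacing $Y$ by $Y^-$ amounts to the change $u_{d+1}\mapsto -u_{d+1}$, and averaging the factor $(1+u_{d+1})$ appearing in $\Phi_{\bkappa}$ against its reflection produces exactly the weight $(1-u_{d+1}^2)^{\mu-1}$ in the definition of $\Tb_{(x,t)}$. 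The constants agree because $C_{\k,\mu} = C_{\bkappa}$, using $c_{\bkappa-\f{\mathbf 1}{2}} = c_{\k-\f{\mathbf 1}{2}}c_{\mu-\f12}$ and $\a_{\bkappa}=\a_{\k,\mu}$.

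With this identity in hand, the second step transfers the cone norm to the surface norm. For $p<\infty$ the convexity of $r\mapsto |r|^p$ gives the pointwise bound
\begin{equation*}
 \left|\Tb_{(x,t)} g(y,s)\right|^p \le \tfrac12\left[\left|\sT_{(X,t)} g(Y,s)\right|^p + \left|\sT_{(X,t)} g(Y^-,s)\right|^p\right],
\end{equation*}
which I would integrate against $e^{-ps/2}\ub(y,s)\,\d y\,\d s$ over $\VV^{d+1}$. Using $\su_{\bkappa} = e^{s}\sw_{\bkappa}$ and $\ub = e^{s}W_{\k,\mu}$ together with the symmetrization formula \eqref{eq:OPVandV_0} applied to $G(Y,s)=|\sT_{(X,t)}g(Y,s)|^p e^{-ps/2}e^{s}$, the resulting integral is precisely $\ssb_{\bkappa}^{-1}\|\sT_{(X,t)} g\|_{p,\bkappa}^p$ computed on $\VV_0^{d+2}$. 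Since both $\ssb_{\bkappa}$ and $\bb_{\k,\mu}$ are fixed by the normalization $\la 1,1\ra = 1$, and \eqref{eq:OPVandV_0} with $f\equiv 1$ shows that the two total masses coincide, one has $\bb_{\k,\mu} = \ssb_{\bkappa}$, so these factors cancel and $\|\Tb_{(x,t)} g\|_{p,\k,\mu} \le \|\sT_{(X,t)} g\|_{p,\bkappa}$. The case $p=\infty$ is even simpler, as the essential supremum of $|\Tb_{(x,t)} g(y,s)|\,e^{-s/2}$ over the cone is dominated by that of $|\sT_{(X,t)} g(Y,s)|\,e^{-s/2}$ over $\VV_0^{d+2}$.

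The final step is to invoke Proposition~\ref{prop:sTbd} on $\VV_0^{d+2}$, which gives $\|\sT_{(X,t)} g\|_{p,\bkappa} \le e^{t/2}\|g\|_{p,u(2\a_{\bkappa})} = e^{t/2}\|g\|_{p,u(2\a_{\k,\mu})}$, and the claimed inequality follows immediately. I expect the main point to be the careful bookkeeping in the first step, namely confirming that the symmetrization of $\Phi_{\bkappa}$ reproduces exactly the weight $(1-u_{d+1}^2)^{\mu-1}$ and the constant $C_{\k,\mu}$ in $\Tb_{(x,t)}$, and that the normalizations satisfy $\bb_{\k,\mu}=\ssb_{\bkappa}$; once this reduction is in place there is no further hard analysis, since all of the genuine estimation is packaged into Proposition~\ref{prop:sTbd}.
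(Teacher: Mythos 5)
Your proof is correct and follows essentially the same route as the paper: the paper's own proof consists precisely of the symmetrization identity \eqref{eq:Tb=sT+sT}, the integral relation \eqref{eq:OPVandV_0}, and an appeal to Proposition \ref{prop:sTbd} on $\VV_0^{d+2}$, and you have simply supplied the bookkeeping (matching of constants and weights, convexity for $p<\infty$, the equality $\bb_{\k,\mu}=\ssb_{\bkappa}$) that the paper leaves implicit. One trivial slip: the exponent should be $\a_{\bkappa}=|\bkappa|+\frac{(d+1)-2}{2}$, since the $x$-variable of $\VV_0^{d+2}$ has dimension $d+1$, which still yields $\a_{\bkappa}=\a_{\k,\mu}$ exactly as you use it.
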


\begin{proof}
Let $\sT_{(X,t)}$ denote the generalized translation on the conic surface $\VV_0^{d+2}$, defined as
in Definition \ref{def:sT-transl} but with $\sw_\k$ replaced by $\sw_{\bkappa}$ and $\rho(x,t,y,s;u)$ 
replaced by $\brho(x,t,y,s; u)$. Comparing the definitions, it follows that 
\begin{equation} \label{eq:Tb=sT+sT}
    \Tb_{(x,t)} g(y,s) = \frac12 \left[\sT_{(X,t)}(Y,s) + \sT_{(X,t)}(Y^-,s)\right] 
\end{equation}
and we also have $\sT_{(X,t)}(Y^-,s) = \sT_{(X^-,t)}(Y,s))$. Hence, by \eqref{eq:OPVandV_0}, the
boundedness of $\Tb_{(x,t)}$ as stated follows from the boundedness of $\sT_{(x,t)}$ in 
Proposition \ref{prop:sTbd}.
\end{proof}

For $\a_{\k,\mu} < \f12$, we have a counterpart of Proposition \ref{prop:sTbd2}. 

\begin{prop} \label{lem:Tbd2}
Let $d \ge 1$ and $\a_{\k,\mu} \le \f12$. For $g\in L^1_{u(2\a_{\k,\mu})}(\RR_+)\cap L^1_{u(2\a_{\k,\mu}+\f12)}(\RR_+)$,
$$
   \| \Tb_{(x,t)} g\|_{1, \k,\mu} \le  e^{t/2} \|g\|_{1,  u(2\a_{\k,\mu})}+  \sqrt{t} e^{t/2} \|g\|_{1,  u(2\a_{\k,\mu}+\f12)}
      , \qquad 1 \le p \le \infty.
$$
\end{prop}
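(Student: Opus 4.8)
The plan is to reduce the estimate to the conic surface $\VV_0^{d+2}$ and invoke Proposition~\ref{prop:sTbd2}, exactly as the proof of Proposition~\ref{lem:Tbd} reduces the case $\a_{\k,\mu}\ge\f12$ to Proposition~\ref{prop:sTbd}. The point is that the translation operator $\Tb_{(x,t)}$ on the cone is built from the surface translation $\sT_{(X,t)}$ on $\VV_0^{d+2}$ through the decomposition \eqref{eq:Tb=sT+sT}, and the parameter governing the surface estimate matches: writing $\bkappa=(\k,\mu)\in\RR^{d+1}$, the surface index for $\VV_0^{d+2}$ is $|\bkappa|+\f{(d+1)-2}{2}=|\k|+\mu+\f{d-1}{2}=\a_{\k,\mu}$, so the hypothesis $\a_{\k,\mu}\le\f12$ places us precisely in the regime covered by Proposition~\ref{prop:sTbd2}.

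First I would record, as in the proof of Proposition~\ref{lem:Tbd}, the splitting $\Tb_{(x,t)}g(y,s)=\f12\big[\sT_{(X,t)}g(Y,s)+\sT_{(X,t)}g(Y^-,s)\big]$ together with the symmetry $\sT_{(X,t)}g(Y^-,s)=\sT_{(X^-,t)}g(Y,s)$, where $X=(x,\sqrt{t^2-\|x\|^2})$ and $Y,Y^-$ are the two lifts of $(y,s)$. Applying the triangle inequality and then \eqref{eq:OPVandV_0} converts the cone norm $\|\Tb_{(x,t)}g\|_{1,\k,\mu}$ into the surface norm $\|\sT_{(X,t)}g\|_{1,\bkappa}$ on $\VV_0^{d+2}$, with the weight bookkeeping supplied by \eqref{eq:weightV-V0}, which identifies $W_{\k,\mu}(x,t)\,\d x\,\d t$ on the cone with $\sw_{\bkappa}\,\d\s$ on the surface. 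I would then apply Proposition~\ref{prop:sTbd2} with $d$ replaced by $d+1$ and $\k$ replaced by $\bkappa$; since $t=\|X\|$ is the fixed translation parameter, its two-term bound $e^{t/2}\|g\|_{1,u(2\a)}+\f{1}{(2\a+1)\sqrt{\pi}}\sqrt{t}\,e^{t/2}\|g\|_{1,u(2\a+\f12)}$ with $\a=\a_{\k,\mu}$ yields the claimed estimate once the harmless constant $\f{1}{(2\a+1)\sqrt{\pi}}\le 1$ is absorbed.

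The reduction is purely structural, so there is no genuine analytic obstacle: the hard work has already been carried out in Proposition~\ref{prop:sTbd2}. The points that require care are verification-level — confirming that both terms of the surface estimate survive the symmetrization rather than collapsing into one, checking the parameter identification $\a_{\bkappa}=\a_{\k,\mu}$ recorded above, and noting that the degenerate endpoint $\a_{\k,\mu}=0$, forced by $\k=0$, $\mu=0$, $d=1$, lands exactly on the $\a=0$ branch of Proposition~\ref{prop:sTbd2} (the surface $\VV_0^3$ with $\bkappa=0$), so that case needs no separate argument. I expect the subtlest bookkeeping to be carrying the $\sqrt{t}\,e^{t/2}$ coefficient of the second term intact through the change of measure \eqref{eq:OPVandV_0}, mirroring the way the single exponential term was handled in the proof of Proposition~\ref{lem:Tbd}.
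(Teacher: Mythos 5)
Your proposal is correct and is exactly the argument the paper intends: the paper states this proposition without proof, leaving implicit the same reduction you spell out — the decomposition \eqref{eq:Tb=sT+sT} together with \eqref{eq:OPVandV_0} transfers the cone estimate to the surface $\VV_0^{d+2}$ with $\bkappa=(\k,\mu)$, where Proposition \ref{prop:sTbd2} applies since $\a_{\bkappa}=|\k|+\mu+\f{d-1}{2}=\a_{\k,\mu}\le\f12$, mirroring how Proposition \ref{lem:Tbd} was deduced from Proposition \ref{prop:sTbd}. Your parameter check, the absorption of the constant $\f{1}{(2\a+1)\sqrt{\pi}}\le 1$, and the observation that the degenerate case $\a_{\k,\mu}=0$ lands on the $\a=0$ branch of Proposition \ref{prop:sTbd2} are all accurate.
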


The generalized operator is used to defined a pseudo convolution operator on $\VV^{d+1}$.  
For $f \in L^2_{\ub(\k,\mu)}(\VV^{d+1})$ and $g \in L^2_{2 u(\a_{\k,\mu})}(\RR_+)$, we define
$$
 f *_\VV g(x,t) = \bb_{\k,\mu} \int_{\VV^{d+1}} f(y,s) \Tb_{(x,t)}g (y,s) W_{\k,\mu}(y,s) \d y \d s, \quad (x,t) \in \VV^{d+1}.
$$

\begin{thm} \label{thm:f*g-bddV}
Let $d \ge 1$ and $\a_{\k,\mu} \ge \f12$. For $f \in L^p_{\ub(\k,\mu)}(\VV^{d+1})$, $1 \le p\le \infty$, and 
$g\in L^1_{u(2\a_{\k,\mu})}(\RR_+)$,
$$
  \left \|f*_\VV g \right \|_{p, \k,\mu} \le \|f\|_{p,\k,\mu} \|g\|_{1, u(2\a_{\k,\mu})}, \qquad 1 \le p \le \infty. 
$$
\end{thm}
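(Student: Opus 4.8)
The plan is to mirror the proof of Theorem \ref{thm:f*g-bdd} on the conic surface, substituting the cone translation estimate of Proposition \ref{lem:Tbd} for Proposition \ref{prop:sTbd}. Two facts drive the argument. First, the bound $\|\Tb_{(x,t)}g\|_{p,\k,\mu}\le e^{t/2}\|g\|_{p,u(2\a_{\k,\mu})}$ supplied by Proposition \ref{lem:Tbd}. Second, the symmetry of the translation kernel in its two arguments: since $\Tb_{(x,t)}g(y,s)$ depends on $(x,t)$ and $(y,s)$ only through the symmetric quantities $t+s$ and $\brho(x,t,y,s;u)$, and $\brho$ is manifestly invariant under the interchange $(x,t)\leftrightarrow(y,s)$, one has $\Tb_{(x,t)}g(y,s)=\Tb_{(y,s)}g(x,t)$. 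I will also use the elementary weight identity $e^{s/2}W_{\k,\mu}(y,s)=e^{-s/2}\ub(y,s)$, which follows from $W_{\k,\mu}=\ub\,e^{-t}$.

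First I would treat $p=1$. Writing $f*_\VV g$ as a double integral over $\VV^{d+1}\times\VV^{d+1}$ and applying Fubini to integrate in $(x,t)$ first, the kernel symmetry turns the inner integral into $\|\Tb_{(y,s)}g\|_{1,\k,\mu}$, which Proposition \ref{lem:Tbd} bounds by $e^{s/2}\|g\|_{1,u(2\a_{\k,\mu})}$. The leftover factor $e^{s/2}W_{\k,\mu}$ collapses to $e^{-s/2}\ub$, and what remains is exactly $\|f\|_{1,\k,\mu}\|g\|_{1,u(2\a_{\k,\mu})}$. For $p=\infty$ I would instead bound $|f(y,s)|\le\|f\|_{\infty,\k,\mu}e^{s/2}$ pointwise, pull the norm out, and recognize the remaining integral as $\|\Tb_{(x,t)}g\|_{1,\k,\mu}\le e^{t/2}\|g\|_{1,u(2\a_{\k,\mu})}$; the factor $e^{-t/2}$ built into $\|\cdot\|_{\infty,\k,\mu}$ cancels the $e^{t/2}$, and taking the supremum over $(x,t)$ closes this case. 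The range $1<p<\infty$ then follows by Riesz--Thorin interpolation between these two endpoints, precisely as in Theorem \ref{thm:f*g-bdd}.

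Because Proposition \ref{lem:Tbd} already absorbs the genuine analytic content (being itself reduced to Proposition \ref{prop:sTbd} via the lifting \eqref{eq:Tb=sT+sT} and \eqref{eq:OPVandV_0}), the present theorem is essentially bookkeeping, and I do not expect a serious obstacle. The only points requiring care are the absolute convergence needed to justify Fubini in the $p=1$ step, which follows from $g\in L^1_{u(2\a_{\k,\mu})}$ together with $f\in L^p_{\ub(\k,\mu)}$, and the verification of the symmetry $\Tb_{(x,t)}g(y,s)=\Tb_{(y,s)}g(x,t)$ from the explicit form of $\brho$.

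A fully transparent alternative would bypass even these computations by working directly on the conic surface. Lifting $f$ to its even extension $\tilde f$ on $\VV_0^{d+2}$ through $X=(x,\sqrt{t^2-\|x\|^2})$, the identities \eqref{eq:Tb=sT+sT} and \eqref{eq:OPVandV_0} yield $f*_\VV g(x,t)=(\tilde f*g)(X,t)$, where $*$ is the surface convolution; since the norm correspondence gives $\|f\|_{p,\k,\mu}=\|\tilde f\|_{p,\bkappa}$ and the parameters match as $\a_\bkappa=\a_{\k,\mu}$ (so $u(2\a_\bkappa)=u(2\a_{\k,\mu})$), the desired estimate reduces immediately to Theorem \ref{thm:f*g-bdd} applied with $d$ replaced by $d+1$.
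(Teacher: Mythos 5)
Your proposal is correct, and it in fact contains two valid proofs. Your primary route --- replaying the proof of Theorem \ref{thm:f*g-bdd} directly on the cone, using the symmetry $\Tb_{(x,t)}g(y,s)=\Tb_{(y,s)}g(x,t)$ (clear from the symmetry of $\brho$ and of $t+s$), the identity $\e^{s/2}W_{\k,\mu}=\e^{-s/2}\ub$, Fubini for $p=1$, the pointwise bound for $p=\infty$, and Riesz--Thorin in between --- is sound, since Proposition \ref{lem:Tbd} supplies exactly the translation bound needed at both endpoints. This is \emph{not}, however, how the paper argues: the paper's proof is the one you relegate to your final paragraph. It writes $f*_\VV g(x,t)=\f12\left[f*_{\VV_0}g(X,t)+f*_{\VV_0}g(X^-,t)\right]$ via \eqref{eq:Tb=sT+sT}, where $f*_{\VV_0}g$ is the surface convolution of the (even) lift of $f$ to $\VV_0^{d+2}$, and then invokes \eqref{eq:OPVandV_0} together with Theorem \ref{thm:f*g-bdd} (with $d+1$ in place of $d$, noting $\a_\bkappa=\a_{\k,\mu}$) to transfer the norm bound; this is a three-line reduction. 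The trade-off is what you already identify: since Proposition \ref{lem:Tbd} was itself proved by lifting to the surface, your direct argument redoes on the cone a Fubini/symmetry computation that the paper performs only once (in Theorem \ref{thm:f*g-bdd}) and then inherits; conversely, your version has the minor virtue of being self-contained at the level of the cone, requiring no discussion of how norms and convolutions correspond under the map $(x,t)\mapsto(X,t)$. Both are complete; the only points you flag as needing care (Fubini justification and the symmetry of $\Tb$) are indeed routine.
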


\begin{proof}
Let $f*_{\VV_0}g$ denote the pseudo convolution on $\VV_0^{d+2}$ defined as in Definition \ref{def:f*g}
with $\k$ replaced by $\bkappa$ and $d+1$ replaced by $d+2$. By \eqref{eq:Tb=sT+sT}, it follows
that 
$$
  f *_\VV g(x,t) =  \f12\left[ f*_{\VV_0} g(X,t)+ f*_{\VV_0} g(X^-,t) \right]
$$
Hence, by \eqref{eq:OPVandV_0}, the boundedness of $f *_\VV g$ follows from the boundedness 
of $f*_{\VV_0} g$ in Theorem \ref{thm:f*g-bdd}.
\end{proof}

Let the space  $L^p_{\ub(\k,\mu),*}(\VV^{d+1})$ be defined with an additional $\sqrt{t}$ in its norm 
$\|f\|_{p, \k,\mu}^*$, in analogous to the space  $L^p_{\su(\k),*}(\VV_0^{d+1})$. We also have an 
analogue of Theorem \ref{thm:f*g-bdd2}. 

\begin{thm} \label{thm:f*g-bddV2}
Let $d \ge 2$ and $\a_{\k,\mu} < \f12$. For $f \in L^1_{\ub(\k,\mu)}(\VV^{d+1})\cap L^1_{\ub(\k,\mu),*}(\VV^{d+1})$
and $g\in L^1_{u(2\a_{\k,\mu})}(\RR_+)\cap L^1_{u(2\a_{\k,\mu}+\f12)}(\RR_+)$,   
$$
  \left \|f*_\VV g \right \|_{1, \k,\mu} \le \|f\|_{1,\k,\mu} \|g\|_{1, u(2\a_{\k,\mu})} + \|f\|_{1,\k,\mu}^* 
        \|g\|_{1, u(2\a_{\k,\mu}+\f12)}.
$$
\end{thm}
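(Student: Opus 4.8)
The plan is to follow the proof of Theorem~\ref{thm:f*g-bdd2} on the conic surface essentially verbatim, with Proposition~\ref{lem:Tbd2} playing the role that Proposition~\ref{prop:sTbd2} played there. The only structural fact needed beyond that proposition is the symmetry $\Tb_{(x,t)}g(y,s)=\Tb_{(y,s)}g(x,t)$, which is immediate from the definition of $\Tb$: the integrand depends on the two base points only through $t+s$ and through $\brho(x,t,y,s;u)$, both of which are symmetric under interchanging $(x,t)$ and $(y,s)$, while the remaining factors $\Phi_\k(u')(1-u_{d+1}^2)^{\mu-1}$ and $(\sin\t)^{2\a_{\k,\mu}-1}$ do not depend on the base point at all.

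First I would expand the norm and transfer the translation onto $g$. Writing out $f*_\VV g$, applying the triangle inequality, and using the symmetry just noted together with Fubini gives
\begin{align*}
  \|f*_\VV g\|_{1,\k,\mu} \le \bb_{\k,\mu} \int_{\VV^{d+1}} |f(y,s)|\, \|\Tb_{(y,s)} g\|_{1,\k,\mu}\, W_{\k,\mu}(y,s)\,\d y\,\d s.
\end{align*}
Next I would insert the bound of Proposition~\ref{lem:Tbd2}, namely
$\|\Tb_{(y,s)} g\|_{1,\k,\mu} \le \e^{s/2}\|g\|_{1,u(2\a_{\k,\mu})} + \sqrt{s}\,\e^{s/2}\|g\|_{1,u(2\a_{\k,\mu}+\f12)}$, which splits the right-hand side into two summands, each factoring as one of the two Laguerre norms of $g$ times a weighted integral of $|f|$ over $\VV^{d+1}$.

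The final step is to identify those two weighted integrals as the two norms of $f$. Since $W_{\k,\mu}(y,s)=\ub(y,s)\e^{-s}$ with $\ub(y,s)=h_\k^2(y)(s^2-\|y\|^2)^{\mu-\f12}$, the factor $\e^{s/2}W_{\k,\mu}(y,s)$ equals $\ub(y,s)\e^{-s/2}$, so the first summand collapses to $\|f\|_{1,\k,\mu}\,\|g\|_{1,u(2\a_{\k,\mu})}$; the extra $\sqrt{s}$ in the second summand reproduces exactly the defining weight of the starred norm, giving $\|f\|_{1,\k,\mu}^*\,\|g\|_{1,u(2\a_{\k,\mu}+\f12)}$. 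Summing the two yields the claimed inequality.

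This argument is bookkeeping once Proposition~\ref{lem:Tbd2} is in hand, so I do not anticipate a genuine obstacle; the only points requiring care are matching the $\sqrt{s}$ weight against the definition of $\|\cdot\|_{1,\k,\mu}^*$ and observing that the constant in the second term of Proposition~\ref{lem:Tbd2} has already been normalised to $1$ (using $\a_{\k,\mu}\ge 0$), so that no spurious factor survives. As an alternative route I could reduce directly to the conic surface $\VV_0^{d+2}$: by \eqref{eq:Tb=sT+sT} one has $f*_\VV g(x,t)=\tfrac12\big[f*_{\VV_0}g(X,t)+f*_{\VV_0}g(X^-,t)\big]$, and since $\bkappa=(\k,\mu)$ gives $\a_{\bkappa}=\a_{\k,\mu}<\f12$, applying Theorem~\ref{thm:f*g-bdd2} on $\VV_0^{d+2}$ and then the measure identity \eqref{eq:OPVandV_0} produces the same bound, exactly as in the proof of Theorem~\ref{thm:f*g-bddV}.
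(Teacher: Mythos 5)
Your proposal is correct, and it in fact contains the paper's (implicit) argument as its second half. The paper states Theorem \ref{thm:f*g-bddV2} without an explicit proof; the intended argument is the one used for Theorem \ref{thm:f*g-bddV}: lift to the conic surface $\VV_0^{d+2}$ via \eqref{eq:Tb=sT+sT}, note that with $\bkappa=(\k,\mu)$ one has $\a_\bkappa=|\k|+\mu+\f{d-1}{2}=\a_{\k,\mu}<\f12$, and invoke Theorem \ref{thm:f*g-bdd2} on $\VV_0^{d+2}$ together with the measure identity \eqref{eq:OPVandV_0} -- exactly your alternative route. Your primary route, repeating the proof of Theorem \ref{thm:f*g-bdd2} directly on the cone with Proposition \ref{lem:Tbd2} supplying the bound $\|\Tb_{(y,s)}g\|_{1,\k,\mu}\le \e^{s/2}\|g\|_{1,u(2\a_{\k,\mu})}+\sqrt{s}\,\e^{s/2}\|g\|_{1,u(2\a_{\k,\mu}+\f12)}$, is also sound: the symmetry of $\Tb$ in its two base points holds for precisely the reason you give, and the bookkeeping $\e^{s/2}W_{\k,\mu}(y,s)=\ub(y,s)\e^{-s/2}$ (with the extra $\sqrt{s}$ matching the starred norm) correctly produces the two norms of $f$. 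The only caveat for the direct route is that Proposition \ref{lem:Tbd2} is itself stated in the paper without proof, its natural derivation being again a lift of Proposition \ref{prop:sTbd2} through \eqref{eq:Tb=sT+sT} and \eqref{eq:OPVandV_0}; so the lifting route is the more self-contained of the two, but either way the theorem follows.
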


\subsection{Ces\`aro means of Fourier-Laguerre series on the conic surface}
For $\delta > -1$, the Ces\`aro means of the Fourier-Laguerre series is given by
\begin{align*}
  \Sb_n^\delta(W_{\k,\mu}; f) \, & = \frac{1}{\binom{n+\delta}{n}} \sum_{k=0}^n 
      \binom{n-k+\delta}{n-k} \proj_k (W_{\k,\mu}; f).
\end{align*}

\begin{thm}\label{thm:CesaroV}
Let $d \ge 1$ and $\mu \ge 0$. Assume $\a_{\k,\mu} = |\k|+\mu+ \f{d-1}{2} \ge \f12$. Then 
$\Sb_n^\delta(W_{\k,\mu}; f)$ converse to $f$ in $L^p_{\ub(\k,\mu)}(\VV^{d+1})$, $1\le p \le \infty$, if 
$\delta > 2|\k|+2\mu+d- \f12$ and the inequality is sharp if $p =1$ and $p = \infty$. 
\end{thm}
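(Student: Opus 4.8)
The plan is to mirror the argument for the conic surface in Theorem~\ref{thm:CesaroV0}, transplanting each step to the cone through the convolution $*_\VV$ and the lifting $(x,t)\mapsto(X,t)$ onto $\VV_0^{d+2}$. First I would record the cone analogue of Proposition~\ref{prop:CesaroKV0}: from \eqref{eq:TLn=PnV} one has $\proj_n(W_{\k,\mu}; f) = f*_\VV L_n^{2\a_{\k,\mu}}$, and substituting the Ces\`aro identity \eqref{eq:Cesaro} into the generating function \eqref{eq:generatingL} exactly as on the surface collapses the telescoped sum of Laguerre polynomials, yielding
\begin{equation*}
  \Sb_n^\delta(W_{\k,\mu}; f) = \frac{1}{\binom{n+\delta}{n}}\, f*_\VV L_n^{\delta+2\a_{\k,\mu}+1},
  \qquad \delta+2\a_{\k,\mu}+1 = \delta+2|\k|+2\mu+d.
\end{equation*}

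Next, since $\a_{\k,\mu}\ge\f12$, Theorem~\ref{thm:f*g-bddV} applies and gives, for $p=1$ and $p=\infty$,
\begin{equation*}
  \|\Sb_n^\delta(W_{\k,\mu})\|_{p,\k,\mu} \le \frac{1}{\binom{n+\delta}{n}} \left\|L_n^{\delta+2\a_{\k,\mu}+1}\right\|_{1,u(2\a_{\k,\mu})}.
\end{equation*}
The single-variable Laguerre integral on the right is then estimated by Lemma~\ref{lem:estLn} with $\a = 4\a_{\k,\mu}$ and $\b = \delta - 2\a_{\k,\mu}+1$; together with $\binom{n+\delta}{n}\sim n^\delta$ this produces an operator norm of order $n^{2\a_{\k,\mu}+\f12-\delta}$, $\log n$, or $1$ according as $\delta$ is less than, equal to, or greater than $2\a_{\k,\mu}+\f12$. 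Since $2\a_{\k,\mu}+\f12 = 2|\k|+2\mu+d-\f12$, boundedness, and hence convergence, follows whenever $\delta$ exceeds this critical value, for all $1\le p \le \infty$ by Riesz--Thorin interpolation.

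For sharpness at $p=1$ and $p=\infty$ I would produce a matching lower bound at the apex $(x,t)=(0,0)$, whose lift is $X=(0,0)$. Applying \eqref{eq:PnLV0_x=0} on $\VV_0^{d+2}$ (with $d$ replaced by $d+1$, so that the Laguerre index is $2(|\k|+\mu)+d-1 = 2\a_{\k,\mu}$) together with the symmetry of the reproducing kernel gives $\sP_n(\sw_{\bkappa}; (0,0),(Y,s)) = L_n^{2\a_{\k,\mu}}(s)$; the averaging relation behind Theorem~\ref{thm:repordVL} (equivalently \eqref{eq:OPVandV_0}) then collapses its two terms to $\Pb_n(W_{\k,\mu};(0,0),(y,s)) = L_n^{2\a_{\k,\mu}}(s)$. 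Consequently the Ces\`aro kernel at the apex is $\binom{n+\delta}{n}^{-1} L_n^{\delta+2\a_{\k,\mu}+1}(s)$, and the standard integral-operator norm identity, as in \eqref{eq:SnL1norm}, forces
\begin{equation*}
  \|\Sb_n^\delta(W_{\k,\mu})\|_{p,\k,\mu} \ge \frac{1}{\binom{n+\delta}{n}} \int_0^\infty \left|L_n^{\delta+2\a_{\k,\mu}+1}(s)\right| s^{2\a_{\k,\mu}} \e^{-s/2}\,\d s,
\end{equation*}
which by Lemma~\ref{lem:estLn} matches the upper bound and diverges once $\delta \le 2\a_{\k,\mu}+\f12$.

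Most of the work has already been automated by the lifting relation, so the one genuinely new ingredient is the apex value of the reproducing kernel. I expect the point needing care to be the transfer of the operator-norm identity \eqref{eq:SnL1norm} to the cone, i.e.\ that testing against the apex realizes the supremum up to the constant $1$; this rests on the symmetry of $\Tb_{(x,t)}$ in its two arguments and on the fact, from Theorem~\ref{thm:f*g-bddV}, that the convolution is bounded by the single-variable norm with constant $1$. The hypothesis $\a_{\k,\mu}\ge\f12$ is essential here: for $\a_{\k,\mu}<\f12$ only the weaker two-norm estimate of Theorem~\ref{thm:f*g-bddV2} is available, and, exactly as in Theorem~\ref{thm:Cesaro2}, the resulting sufficient condition $\delta>2\a_{\k,\mu}+1$ no longer meets the necessary threshold $\delta>2\a_{\k,\mu}+\f12$, so sharpness would fail in that regime.
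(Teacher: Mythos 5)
Your proposal is correct and takes essentially the same route as the paper: both arguments reduce the cone to the conic surface $\VV_0^{d+2}$ via the lifting $(x,t)\mapsto (X,t)$, obtain the upper bound for $p=1,\infty$ from the convolution/kernel machinery together with Lemma~\ref{lem:estLn} (with $\a=4\a_{\k,\mu}$, $\b=\delta-2\a_{\k,\mu}+1$), and get sharpness by evaluating the kernel at the apex, where it collapses to $\binom{n+\delta}{n}^{-1}L_n^{\delta+2\a_{\k,\mu}+1}$. The paper merely packages this more compactly, writing $\Kb_n^\delta(W_{\k,\mu})$ as the average of the two lifted surface kernels $\sK_n^\delta(\sw_{\bkappa})$ and citing Theorem~\ref{thm:CesaroV0} wholesale, whereas you unroll that same argument one level by re-deriving the cone analogue of Proposition~\ref{prop:CesaroKV0} and invoking Theorem~\ref{thm:f*g-bddV}.
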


\begin{proof}
As in the proof of Theorem \ref{thm:CesaroV0}, we need the boundedness of 
$$
  \left \|\Sb_n^\delta(W_{\k,\mu}) \right \|_{p,\k,\mu} =
     \int_{\VV^d} \left| \Kb_n^\delta \left(W_{\k,\mu}; (x,t),(y,s) \right)\right | \e^{s/2} W_{\k,\mu}(y,s) \d y \d s
$$
where $p=1$ or $p = \infty$ and $ \Kb_n^\delta \left(W_{\k,\mu} \right)$ is the kernel of the $(C,\delta)$ 
means, which can be written in terms of its counterpart $\sK_n^\delta(\sw_{\bkappa})$, with $\bkappa = (\k,\mu)$,  
on $\VV_0^{d+2}$ as
$$
 \Kb_n^\delta \left(W_{\k,\mu}; (x,t),(y,s) \right) = \frac 12 \left[\sK_n^\delta\left(\sw_{\bkappa}; (X,t), (Y,s)\right)+ 
\sK_n^\delta\left(\sw_{\bkappa}; (X,t), (Y^-,s)\right) \right],
$$
so that the boundedness of $\left \|\Sb_n^\delta(W_{\k,\mu}) \right \|_{p,\k,\mu}$ from above follows from
Theorem \ref{thm:CesaroV0}. Furthermore, since $ \Kb_n^\delta \left(W_{\k,\mu}; (x,t),(0,0) \right) 
= \sK_n^\delta\left(\sw_{\bkappa}; (X,t), (0,0)\right)$, the boundedness from below also follows by 
the proof of Theorem \ref{thm:CesaroV0}. 
\end{proof}

For $\a_{\k,\mu} \le \f12$, we can state the following counterpart of Theorem \ref{thm:Cesaro2}.

\begin{thm} \label{thm:Cesaro2V}
Let $d \ge 1$ and $\mu \ge 0$.  Assume $\a_{\k,\mu} \le \f12$. Then $\Sb_n^\delta(W_{\k,\mu}; f)$ converse 
to $f$ in $L^1_{\ub(\k,\mu)}(\VV^{d+1})\cap L^1_{\ub(\k,\mu),*}(\VV^{d+1})$ if $\delta > 2 \a_{\k,\mu}+1$. 
Moreover, $\Sb_n^\delta(W_{\k,\mu}; f)$ does not converge for all $f \in L^1_{\ub(\k,\mu)}(\VV^{d+1})$ if 
$\delta \le 2\a_{\k,\mu}+\f12$. 
\end{thm}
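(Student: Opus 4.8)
The plan is to transplant the problem to the conic surface $\VV_0^{d+2}$ with $\bkappa=(\k,\mu)$, exactly as in the proof of Theorem~\ref{thm:CesaroV}, and then argue as in the surface result Theorem~\ref{thm:Cesaro2}. The point that makes this clean is that for $\VV_0^{d+2}$ the governing parameter is $\a_\bkappa=|\bkappa|+\frac{(d+1)-2}{2}=|\k|+\mu+\frac{d-1}{2}=\a_{\k,\mu}$, so the hypothesis $\a_{\k,\mu}\le\tfrac12$ places the surface problem in precisely the regime of Theorem~\ref{thm:Cesaro2}. Concretely, I would first record the cone analogue of Proposition~\ref{prop:CesaroKV0}: from \eqref{eq:TLn=PnV} and the same generating-function computation one gets
\[
  \Sb_n^\delta(W_{\k,\mu};f)=\frac{1}{\binom{n+\delta}{n}}\,f*_\VV L_n^{\delta+2\a_{\k,\mu}+1},
\]
and, equivalently, the reduction identity $\Kb_n^\delta(W_{\k,\mu};(x,t),(y,s))=\tfrac12\big[\sK_n^\delta(\sw_\bkappa;(X,t),(Y,s))+\sK_n^\delta(\sw_\bkappa;(X,t),(Y^-,s))\big]$ used in Theorem~\ref{thm:CesaroV}, which via \eqref{eq:OPVandV_0} lets one bound the cone operator norm by its surface counterpart.

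For the sufficiency I would apply Theorem~\ref{thm:f*g-bddV2} to the convolution representation, obtaining
\[
  \|\Sb_n^\delta(W_{\k,\mu};f)\|_{1,\k,\mu}\le\frac{1}{\binom{n+\delta}{n}}\Big(\big\|L_n^{\delta+2\a_{\k,\mu}+1}\big\|_{1,u(2\a_{\k,\mu})}\|f\|_{1,\k,\mu}+\big\|L_n^{\delta+2\a_{\k,\mu}+1}\big\|_{1,u(2\a_{\k,\mu}+\f12)}\|f\|_{1,\k,\mu}^*\Big).
\]
By Lemma~\ref{lem:estLn} (with $\a=4\a_{\k,\mu},\ \b=\delta-2\a_{\k,\mu}+1$ for the first factor and $\a=4\a_{\k,\mu}+1,\ \b=\delta-2\a_{\k,\mu}$ for the second) together with $\binom{n+\delta}{n}\sim n^\delta$, the first factor is $O(n^{2\a_{\k,\mu}+\f12-\delta})$ and the second is $O(n^{2\a_{\k,\mu}+1-\delta})$, so both stay bounded once $\delta>2\a_{\k,\mu}+1$. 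Uniform boundedness of these operators, combined with convergence on the dense subspace of polynomials, then yields convergence in $L^1_{\ub(\k,\mu)}(\VV^{d+1})\cap L^1_{\ub(\k,\mu),*}(\VV^{d+1})$.

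For the necessity I would test the kernel at the apex $(x,t)=(0,0)$. The cone analogue of \eqref{eq:PnLV0_x=0} gives $\Pb_n(W_{\k,\mu};(0,0),(y,s))=L_n^{2\a_{\k,\mu}}(s)$, whence $\Kb_n^\delta(W_{\k,\mu};(0,0),(y,s))=\binom{n+\delta}{n}^{-1}L_n^{\delta+2\a_{\k,\mu}+1}(s)$, a function of $s$ alone. Since the integrand then depends only on $s$, the $y$-integration collapses the integral over $\VV^{d+1}$ against $W_{\k,\mu}$ into the one-dimensional Laguerre integral with weight $s^{2\a_{\k,\mu}}\e^{-s}$, giving
\[
  \|\Sb_n^\delta(W_{\k,\mu})\|_{1,\k,\mu}\ge\frac{1}{\binom{n+\delta}{n}}\big\|L_n^{\delta+2\a_{\k,\mu}+1}\big\|_{1,u(2\a_{\k,\mu})}\sim n^{2\a_{\k,\mu}+\f12-\delta},
\]
the right side being of order $\log n$ when $\delta=2\a_{\k,\mu}+\f12$. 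Hence the operator norms are unbounded for $\delta\le 2\a_{\k,\mu}+\f12$, and the uniform boundedness principle forces divergence for some $f\in L^1_{\ub(\k,\mu)}(\VV^{d+1})$.

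The main obstacle is not in either endpoint but in the gap $2\a_{\k,\mu}+\f12<\delta\le 2\a_{\k,\mu}+1$, which this argument leaves undecided, exactly mirroring the situation on the conic surface in Theorem~\ref{thm:Cesaro2}. The gap is caused by the second, starred-norm term of Theorem~\ref{thm:f*g-bddV2} (equivalently the $\sqrt t$ term in Proposition~\ref{lem:Tbd2}): its Laguerre factor $\|L_n^{\delta+2\a_{\k,\mu}+1}\|_{1,u(2\a_{\k,\mu}+\f12)}$ only remains bounded for $\delta>2\a_{\k,\mu}+1$, whereas the apex lower bound already fails to improve past $2\a_{\k,\mu}+\f12$. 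Closing it would require estimating $\Kb_n^\delta$ directly, bypassing the closed-form convolution bound—precisely the delicate hard analysis that, as remarked after Theorem~\ref{thm:CesaroV0}, is open even for the product Laguerre weight on $\RR_+^2$ that corresponds to the degenerate case here.
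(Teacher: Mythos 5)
Your proposal is correct and takes essentially the same route the paper intends: Theorem~\ref{thm:Cesaro2V} is stated without proof as the counterpart of Theorem~\ref{thm:Cesaro2}, to be obtained by the cone-to-surface reduction used for Theorem~\ref{thm:CesaroV} together with the convolution bound of Theorem~\ref{thm:f*g-bddV2} and Lemma~\ref{lem:estLn} for sufficiency, and the apex lower bound plus uniform boundedness for necessity. The details you supply (the representation $\Sb_n^\delta(W_{\k,\mu};f)=\binom{n+\delta}{n}^{-1}f*_\VV L_n^{\delta+2\a_{\k,\mu}+1}$, the two Laguerre-norm asymptotics, and the collapse of the kernel at $(0,0)$ to $L_n^{\delta+2\a_{\k,\mu}+1}(s)$) are exactly the steps the paper leaves implicit, including the unresolved gap $2\a_{\k,\mu}+\tfrac12<\delta\le 2\a_{\k,\mu}+1$.
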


As discussed in Remark \ref{rem:5.1}, when $d =1$, our set-up is equivalent to the orthogonal expansions
with respect to $U_{\k,\mu}$, defined in \eqref{eq:U-weight}, on $\RR_+^2$, which becomes product Laguerre
expansions if $\k=0$; the case $\k > 0$ has not been studied as far as we are aware. 
Let $L^p_{u(\k,\mu)}(\RR_+^2)$ be the space with the norm
$$
   \|f\|_{p,u(\k,\mu)} = \left ( \int_{\RR_+^2} |f(x_1,x_2)|\e^{(x_1+x_2)/2} U_{\k,\mu}(x_1,x_2) \d x_1 \d x_2 \right)^{\f1p},
 \quad 1 \le p <\infty,
$$  
and 
$$
  \|f\|{\infty,u(\k,\mu)} = \esssup \left\{ | f(x_1,x_2)| \e^{- (x_1+x_2)/2}: (x_1,x_2) \in \RR_+^2 \right \}.
$$
Let $S_n^\delta(U_{\k,\mu}; f)$ be the Ces\`aro means of the Fourier-orthogonal expansions with respect 
to $U_{\k,\mu}$ on $\RR_+^2$. Then our Theorem \ref{thm:CesaroV} for $d=2$ yields: 

\begin{cor}
Let $\k, \mu \ge 0$. If $\k+ \mu \ge \f12$, then $S_n^\delta(U_{\k,\mu}; f)$ converges to $f \in 
L^p_{u(\k,\mu)}(\RR_+^2)$, $1\le p \le \infty$, if $\delta > 2\k+2\mu+ \f12$ and the result is sharp
if $p =1$ or $p = \infty$. 
\end{cor}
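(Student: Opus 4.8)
The plan is to identify this corollary as the planar ($d=1$) case of Theorem~\ref{thm:CesaroV}, transported from the wedge $\VV^2$ to the quadrant $\RR_+^2$ by the rotation of Remark~\ref{rem:5.1}. First I would record the affine map $R:(x,t)\mapsto(x_1,x_2)$ with $x_1=\tfrac{t+x}{2}$, $x_2=\tfrac{t-x}{2}$, whose inverse is $t=x_1+x_2$, $x=x_1-x_2$; this is a linear bijection of $\VV^2=\{(x,t):|x|\le t\}$ onto $\RR_+^2$ with constant Jacobian. Using $t^2-x^2=4x_1x_2$ and $|x|=|x_1-x_2|$, the pullback of $W_{\k,\mu}$ is a constant multiple of $U_{\k,\mu}$, while $\e^{-t}=\e^{-x_1-x_2}$, so the damping factors $\e^{-t/2}$ and $\e^{-(x_1+x_2)/2}$ correspond. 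The constant discrepancy between the two weights together with the constant Jacobian is absorbed by the normalizing constant $\bb_{\k,\mu}$ (respectively by the constant defining $\|\cdot\|_{p,u(\k,\mu)}$), so that $R$ is a genuine isometry $L^p_{\ub(\k,\mu)}(\VV^2)\to L^p_{u(\k,\mu)}(\RR_+^2)$ for every $1\le p\le\infty$.

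Next I would check that $R$, being affine of degree one, sends polynomials of degree $n$ to polynomials of degree $n$ and intertwines the two inner products; hence it carries $\CV_n(\VV^2,W_{\k,\mu})$ onto the space of orthogonal polynomials of degree $n$ for $U_{\k,\mu}$ on $\RR_+^2$. Consequently the projection operators $\proj_n$, the partial sums, and therefore the Ces\`aro means $\Sb_n^\delta(W_{\k,\mu})$ correspond under $R$ exactly to $S_n^\delta(U_{\k,\mu})$, with identical operator norms on each $L^p$.

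Finally I would invoke Theorem~\ref{thm:CesaroV} with $d=1$. There $\a_{\k,\mu}=|\k|+\mu+\tfrac{d-1}{2}=\k+\mu$, so the standing hypothesis $\a_{\k,\mu}\ge\f12$ reads $\k+\mu\ge\f12$, and the convergence threshold $\delta>2|\k|+2\mu+d-\f12$ becomes $\delta>2\k+2\mu+\f12$, while the sharpness at $p=1$ and $p=\infty$ transfers verbatim. Transporting the conclusion back through $R$ then gives precisely the asserted convergence of $S_n^\delta(U_{\k,\mu};f)$ on $L^p_{u(\k,\mu)}(\RR_+^2)$ and its sharpness. The one point demanding care—rather than any substantive analytic difficulty—is the bookkeeping of constants: one must verify that the Jacobian, the factor relating $W_{\k,\mu}$ to $U_{\k,\mu}$, and the several normalizations cancel so that $R$ is an exact isometric intertwiner of the Ces\`aro operators and not merely an equivalence up to multiplicative constants; once this is confirmed the corollary is immediate.
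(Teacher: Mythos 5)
Your proposal is correct and is essentially the paper's own (implicit) argument: the corollary is presented as an immediate consequence of Theorem~\ref{thm:CesaroV} applied with $d=1$ (the text's ``for $d=2$'' is a typo, as your computation of the threshold $\delta > 2\k+2\mu+\tfrac12$ confirms), transported to $\RR_+^2$ by the change of variables of Remark~\ref{rem:5.1}, which you have carried out in detail. One minor remark: your final concern about exact isometry is unnecessary, since norm equivalence up to a fixed multiplicative constant already preserves both the convergence statement and the sharpness of the operator-norm bounds.
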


If $\k + \mu < \f12$, we can also state a corollary of Theorem \ref{thm:Cesaro2V} that gives a sufficient 
condition $\delta > 2 \k+ 2\mu +1$ for the convergence in $L^1$ norm. For $\k = 0$, the result for 
$\mu \ge \f12$ agrees with the product Lagueree expansions for $w_\a(x_1)w_\b(x_2)$ with $\a = \b =
\mu - \f12 \ge 0$ (cf. \cite[Theorem 2.3]{X00}); while the result for $\mu < \f12$ provides a sufficient 
condition for the product Laguerre expansions with $\a=\b < 0$, which however is likely not sharp.

\end{document}